\newtheorem{theorem}{Theorem}[section] 
\newtheorem{proposition}[theorem]{Proposition}
\newtheorem{example}[theorem]{Example}
\newtheorem{definition}[theorem]{Definition}
\newtheorem{corollary}[theorem]{Corollary}
\newtheorem{lemma}[theorem]{Lemma}
\newtheorem{remark}[theorem]{Remark}
\DeclareMathOperator*{\argmin}{arg\, min}
\DeclareMathOperator{\dist}{dist}
\DeclareMathOperator{\vspan}{span}
\DeclareMathOperator{\Lip}{\Lip}
\let\div\relax
\DeclareMathOperator{\div}{div}
\DeclareMathOperator{\cddot}{\mathrm{:}} 
\newcommand{\cC}{{\cal C}}
\newcommand{\cE}{{\cal E}}
\newcommand{\cL}{\mathscr{L}}
\newcommand{\cM}{{\cal M}}
\newcommand{\cV}{{\cal V}}
\newcommand{\A}{{\mathbb A}}
\newcommand{\B}{{\mathbb B}}
\let\H\relax
\newcommand{\H}{\mathbb{H}}
\newcommand{\N}{\mathbb{N}}
\newcommand{\R}{\mathbb{R}}
\let\S\relax
\newcommand{\S}{\mathbb{S}}
\renewcommand{\P}{{\mathbb P}}
\newcommand{\td}{{\text d}}
\newcommand{\tin}{\text{ in }}
\newcommand{\ton}{\text{ on }}
\newcommand{\tex}{\text{ex}}
\newcommand{\tand}{\text{and}}
\newcommand{\imp}{\Rightarrow}
\newcommand{\bs}{\backslash}
\newcommand{\lra}{\longrightarrow}
\newcommand{\eq}{\Leftrightarrow}
\newcommand{\wt}{\widetilde}
\newcommand{\e}{{\varepsilon}}
\newcommand{\ph}{{\varphi}}
\newcommand{\g}[1]{\bm{#1}}
\newcommand{\ol}[1]{\overline{#1}}
\newcommand{\norm}[2]{\left\|{#1}\right\|_{#2}}
\newcommand{\norms}[2]{\|{#1}\|_{#2}}
\newcommand{\inner}[2]{\left<{#1}\right>_{#2}}
\newcommand{\red}[1]{\textcolor{red}{#1}}
\newcommand{\set}[2]{\left\{\left.{#1}\ \right|\ {#2}\right\}}
\pgfplotsset{compat=1.18}
\newcommand{\textscale}{5}
\newcommand{\imagescale}{5}
\def\graphscale{\imagescale*0.15}
\def\scale{\textscale*0.115}
\def\mainwidth{\imagescale/\textscale*4 cm}
\def\mainheight{\imagescale/\textscale*4 cm}
\def\xmin{-1}\def\xmax{1}\def\ymin{-1}\def\ymax{1}\def\pointsize{0.25}
\def\width{\mainwidth}\def\height{\mainheight}
\title{Multi-parameter identification in systems of PDEs from internal data}
\author{Elie BRETIN, Eliott KACEDAN, Laurent SEPPECHER} 
\begin{document}
\maketitle
\begin{abstract}
This article aims to present a general analysis of a class of inverse problems that consists in recovering the elliptic parameter maps in systems of PDEs, such as the linear elastic system, from the knowledge of some of their solutions. This identification problem is reformulated as a first-order linear system of the form $\nabla\g\mu + \g B \cdot \g\mu = F$, where $F$ and $\g B$ are tensor fields constructed from the data. A closed range property is proved, which induces $L^2$-stability estimates. We then characterize the null space by introducing the concept of conservative third-order tensor field.

Finally, a discretization based on the finite element method is proposed and some numerical examples show the efficiency of this approach to recover anisotropic elastic parameters from both static and dynamic solutions of the PDE system.
\end{abstract}



\textbf{Keywords:} Inverse parameters problems, Elliptic PDEs, Elastography



\tableofcontents

\section{Introduction}

In this work, we propose a general study of a class of inverse problems that consist in finding the elliptic parameters of a system of PDEs from the knowledge of some of its solutions inside a smooth connected domain $\Omega\subset \R^d$ with $d\geq 2$.

More precisely, we tackle the identification of the parameters of a fourth-order tensor field $\g C_{ijk\ell}(x)$ from the knowledge of some sources $\g f$ and corresponding solutions $\g u$ of the linear elastic system of equations 
\begin{equation}\label{eq:vectorial}
-\div(\g C\cddot \cE(\g u)) = \g f\quad\tin\Omega,
\end{equation}
where $\cE(\g u):=(\nabla \g u+(\nabla \g u)^T)/2$ is the strain matrix. We assume no knowledge on the unknown parameters at the boundary $\partial\Omega$.

The right-hand-side $\g f$ can take different forms, depending on the modeling. It can be either $\g f:=-\rho\partial_{tt}\g u$ or $\g f:=-\partial_{t}\g u$ in the case of hyperbolic or parabolic problems, $\g f:=\omega^2\g u$ for a time-harmonic wave equation, or even $\g f=\g 0$ for static or quasi-static problems. In any case, this term is assumed to be known in $\Omega$. 

The proposed analysis also applies to scalar PDE inverse parameter problems where one wants to recover a matrix field $C_{ij}(x)$ from the knowledge of some $f$ and $u$ solution of the scalar elliptic equation
\begin{align*}
	- \mathrm{div} (C \cdot \nabla u) = f \quad \textrm{ in }\Omega.
\end{align*}
Note that \eqref{eq:vectorial} can be generalized to the more general elliptic system $-\div(\g C\cddot \nabla\g u) = \g f$. In this paper we chose to present the analysis in the context of the linear elastic system  \eqref{eq:vectorial}, as it corresponds to the main application that we have in mind.

\subsection{Scientific context}

One of the main motivations for this work concerns elastography, an imaging modality that aims to reconstruct the mechanical properties of biological tissues. Local values of elastic parameters can be used as discriminating criteria to distinguish between healthy and diseased tissue \cite{sarvazyan1995biophysical,krouskop1998elastic}. This technique was developed at the end of the 90's with 1D ultrasound probes \cite{catheline1998}, where some shear waves generated by a transient impulse on the medium is recorded. It has been extended in the early 2000's to 2D images with the introduction of new devices and has been tested on both phantom data \cite{tanterFink2002} and \emph{in vivo data} \cite{tanterFink2003}, allowing the precise location of tumors from their mechanical properties. While several techniques of elastography exist (see, for example, \cite{gennisson2013ultrasound,parker2010imaging,doyley2012model,brusseau20082}), the most common approach is to use an auxiliary imaging method (such as ultrasound imaging \cite{deprez2011potential,brusseau2000axial}, magnetic resonance imaging, optical coherence tomography \cite{nahas2013supersonic}, \ldots) to measure the displacement field $\g u$ in a medium when a mechanical perturbation is applied. See \cite{sherina2021challenges} and inside references for recent advances on this topic. The inverse problem can be formulated as recovering the Lamé parameters $\mu$ and $\lambda$ in the isotropic linear elastic equation
\begin{equation*}
-\nabla\cdot(2\mu\cE(\g u))-\nabla(\lambda\div\g u) =  \g f\quad\tin\Omega.
\end{equation*}
This problem reads as \eqref{eq:vectorial} if one defines the tensor field $\g C:= 2\mu\g I+\lambda I\otimes I$.

Inverse elliptic multi-parameter problems also arise from other practical situations, such as the problem of identifying the electrical conductivity (see, for instance, \cite{uhlmannCalderon} for a survey of advances in this topic and \cite{ghosh2020calderon, munoz2020calderon} for more recent works) or thermal conductivity \cite{huang1995thermal} from the knowledge of some corresponding potentials or temperature fields in possibly anisotropic media \cite{ferreira2016calderon}.

Recent advances have been made on theoretical aspects of inverse problems posed by elastography. Notably, mathematical results related to uniqueness and stability have emerged under various assumptions related to the number of reconstructed parameters, the knowledge of boundary conditions, and the number of given displacement data. 

Some of the first results concern the reconstruction of a unique elasticity parameter in two dimensions. These have been reported in \cite{barbone2004} with some \emph{a priori} knowledge and two or more measurements and with the knowledge of either the shear or the first Lamé coefficient \cite{barbone2007elastic}.	

The authors in \cite{widlak2015,ammari2015stability} provide some stability estimates for the identification of the Lamé coefficients in the case of the isotropic elastic medium. More specifically, in \cite{widlak2015} the authors obtain stability results under various hypotheses in the context of  quasi-static, transient or time-harmonic elastography. Note also the results of \cite{bal2014,mclaughlin2004unique,mclaughlin2010calculating}, where the stability analysis is based on the properties of transport equations solved by the parameters, under specific assumptions on the displacement fields. However, most of these approaches require the knowledge of the elasticity maps at the boundary, as in \cite{bal2014} where the adjoint state method is used. Numerical simulations for the isotropic elastic problem using iterative methods \cite{ammari2015mathematical,babaniyi2017direct} or hybrid methods \cite{goksel2018} have been carried out and appear to be sufficiently stable to handle the case of high noise data.

The general reconstruction of the anisotropic elastic tensor has also been analyzed in \cite{bal2015reconstruction}, where the authors obtain a Lipschitz stability result under a minimal condition on the number of known displacement fields. Some numerical simulations in the anisotropic case have also been provided \cite{pierron2012virtual,barbone2010adjoint}, but require rather restrictive assumptions on the data.

More recently, in the case of a single elastic map, the work \cite{ammari2021direct} focuses on a stability result under weak assumptions of data regularity and without the need for additional information at the boundary. A discrete version of these results has also been adapted in \cite{bretin2023stability}, where a discrete \textit{inf-sup} condition is given, to guarantee the stability of the discrete inverse problem. An original finite element approach based on a hexagonal tilling is also derived to ensure stable reconstructions in practice. It should also be noted that this numerical method has also been used in the case of experimental data \cite{Brusseau_Seppecher1,Brusseau_Seppecher2}, allowing quantitative reconstructions of the elasticity maps.

The main objective of this paper is to propose a stability analysis in the multi-parameter case and
under minimal assumptions of regularity of the data fields. The proposed approach idea is to adapt the analysis from \cite{ammari2021direct}, to general anisotropic elastic tensors.

\subsection{The Reverse Weak Formulation}

In most applications, some \emph{a priori} knowledge on the tensor $\g C$ is available. Depending on the model, one can assume, for instance, classical symmetries, isotropy, specific anisotropy directions, etc. In the case of an isotropic linear elastic medium, the tensor $\g C$ admits the decomposition $\g C= 2\mu\g I+\lambda I\otimes I$ where $\g I$ is the fourth-order identity tensor, $I$ is the identity matrix, and $\lambda,\mu$ are the Lamé parameters. In more general cases, the unknown tensor $\g C$ admits a decomposition of the form
\begin{align}\label{eq:decomp}
\g C(x) :=\sum_{k=1}^{n}\mu_{k}(x)\g C_{k},\qquad x\in\Omega,
\end{align}
where the $\mu_{k}$ are some unknown scalar maps and the $\g C_{k}$ are known constant tensors. 

Without specific smoothness assumptions and boundary knowledge, the linear elastic system of equations \eqref{eq:vectorial} makes sense in $H^{-1}(\Omega,\R^{d})$.The inverse problem that we aim to solve is posed in the weak sense:
\begin{equation}\label{eq:RWF}\begin{aligned}
&\text{Find }\mu_1,\dots,\mu_n\in L^\infty(\Omega) \text{ such that} \\
&\int_\Omega \sum_{k=1}^n\mu_k\g (\g C_k\cddot\cE(\g u_\ell))\cddot (\nabla\g v)^T = \inner{\g f_\ell,\g v}{H^{-1},H^1_0},\quad \forall \g v\in H^{1}_0(\Omega,\R^d),\quad \forall \ell=1,\dots,m, 
\end{aligned}\end{equation}
from the knowledge of some pairs $(\g u_1,\g f_1),\dots, (\g u_m,\g f_m)\in H^1(\Omega,\R^d)\times H^{-1}(\Omega,\R^d)$. This is called the Reverse Weak Formulation (RWF) of the inverse parameters problem. For a more compact formulation, we denote 
\begin{align*}
& \g u:= \left[\begin{matrix}\g u_1 \\ \vdots \\ \g u_m \end{matrix}\right] \in H^1\left(\Omega,\R^{N}\right), 
\quad \g f:=\left[\begin{matrix}\g f_1 \\ \vdots \\ \g f_m \end{matrix}\right]\in H^{-1}\left(\Omega,\R^{N}\right),\quad\text{with } N=m\, d, \\
	&\bm{\mu} := \left[\begin{matrix}\mu_{1} \\ \vdots \\ \mu_{n} \end{matrix}\right]\in L^\infty(\Omega,\R^n),
	\quad \mathrm{and} \quad
	(\bm{T}_{\g u})_{ijk}:= \left[\begin{matrix}\g C_{k}\cddot\cE(\g u_1) \\ \vdots \\ \g C_{k}\cddot\cE(\g u_m) \end{matrix}\right]_{ij}. 
\end{align*}
Then \eqref{eq:RWF} is rewritten as
\begin{equation}\label{eq:RWF2} \begin{aligned}
	\text{Find }\g\mu\in L^\infty(\Omega,\R^n)\quad \text{ s.t. } \quad
	\int_\Omega (\g T_{\g u}\cdot \g \mu)\cddot (\nabla \g v)^T = \inner{\g f,\g v}{H^{-1},H^1_0}\quad \forall \g v\in H^{1}_0(\Omega,\R^{N}). 
\end{aligned} \end{equation}
The tensor field $\g T_{\g u}$ usually belongs to $L^2(\Omega,\R^{N\times d\times n})$ and is known from the data. This weak formulation defines a data-dependent first-order differential operator 
\begin{equation}\label{eq:AT}\begin{aligned}
A_{\g u} : L^\infty(\Omega,\R^n) &\to H^{-1}\left(\Omega,\R^N\right)\\
\g \mu &\mapsto-\div(\g T_{\g u}\cdot \g \mu)
\end{aligned}\end{equation}
and the inverse parameters problem simply reads:
\begin{equation}\label{eq:RWFcompact}
A_{\g u}\g\mu=\g f.
\end{equation}

\subsection{Paper outline}

The main objective of this article is to study the invertibility of \eqref{eq:RWFcompact} and to propose an efficient method to approach its solution. 

In Section \ref{sec:expanded}, we provide an expanded version of the equation $A_{\g u}\g\mu=\g f$ under the canonical form  $\nabla\bm{\mu} + \g B\cdot\bm{\mu} = F$, where $\g B$ is a third-order tensor field and $F$ is a matrix field, both depending on the measurements. This transformation requires a minimal smoothness hypothesis on the data and a ``left-invertibility'' hypothesis on tensor $\g T_{\g u}$ which is satisfied if the data is ``rich enough''.

From this expanded form of the problem, we are able to prove, in Section \ref{sec:stability}, that $A_{\g u}$ extends to a closed range operator from $L^2$ to $H^{-1}$. We naturally deduce some $L^2$-Lipschitz stability estimates that occur in the null space orthogonal $N(A_{\g u})^\perp$. 

In Section \ref{sec:kernel}, we study the null space $N(A_{\g u})$, which is also the null space of the first order differential operator 
\begin{equation}\label{eq:nablaB}
(\nabla+\g B\cdot):\g\mu\mapsto \nabla\g\mu+\g B\cdot\g\mu.
\end{equation}
under the hypotheses from Section \ref{sec:expanded}. We first show that the dimension is finite and lower than $n$ (the number of parameter maps to be recovered). Then, we propose a finer characterization of the null space with respect to the tensor field $\g B$. For this purpose, we introduce the notion of $k$-conservative tensor fields, which is, in a sense, a generalization of the notion of conservativity for vector fields to third-order tensor fields. 

Lastly, we provide in Section \ref{sec:num} a suitable discretization of \eqref{eq:RWFcompact} and illustrate the stability of the recovery of the tensor $\g C$ in various situations.

\section{The expanded formulation of the inverse problem}\label{sec:expanded}

We provide here some sufficient conditions in order to expand the equation $A_{\g u}\g \mu=\g f$ to
\begin{equation}\label{eq:expanded}
\nabla\g \mu + \g B\cdot\g \mu = F,
\end{equation}
where $\g B$ is a third-order tensor field and $F$ is a matrix field.

\begin{definition} For some positive integers, $n$, $d$, and $N$, we say that a third-order tensor $\g T\in\R^{N\times d\times n}$ admits a left-inverse $\g T^{-1}\in \R^{n\times d\times N}$ if $\g T^{-1}\cdot \g T=\g I$, \emph{i.e.}
\begin{equation}\nonumber
(\g T^{-1}\cdot \g T)_{ijk\ell} = \delta_{i\ell}\delta_{jk}. 
\end{equation}
\end{definition}

\begin{proposition}\label{prop:expanded}\ 
\begin{enumerate}
\item   If $\g T \in W^{1,p}\left(\Omega,\R^{N\times d\times n}\right)$ with $p>d$ then 
\begin{equation*}
-\div(\g T\cdot \g \mu)=\g f\quad\eq\quad \g T\cddot \nabla \g \mu + D \cdot  \g \mu   = -\bm{f},
\end{equation*}
where $D_{ik}:=\sum_{j}\partial_{j}\g T_{ijk}$.
\item Moreover, if for all $x\in\Omega$, $\g T(x)$ admits a left-inverse $\g T^{-1}(x)$ such that $\g T^{-1}\in W^{1,p}\left(\Omega,\R^{n\times d\times n}\right)$, then 
\begin{equation*}
-\div(\g T\cdot \g \mu)=\g f\quad\eq\quad \nabla \g \mu  + \g B \cdot  \g \mu  = F,
\end{equation*}
where $\g B:=\g T^{-1}\cdot D\in L^{p}\left(\Omega,\R^{n\times d\times n}\right)$ and $F:=-\g T^{-1}\cdot \g f\in  H^{-1}\left(\Omega,\R^{n\times d}\right)$. 
\end{enumerate}
\end{proposition}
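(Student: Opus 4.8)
The plan is to handle the two assertions by different means: the first is a distributional product rule, while the second is pointwise tensor algebra carried along $\Omega$ by the left-inverse. Throughout I would use that, for $p>d$, $W^{1,p}(\Omega)\hookrightarrow C^0(\overline\Omega)\cap L^2(\Omega)$, so that every product appearing below is a genuine element of $L^2\subset H^{-1}$.

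For the first claim I would expand the divergence componentwise. Since $(\g T\cdot\g\mu)_{ij}=\sum_k\g T_{ijk}\mu_k$, the Leibniz rule gives, at least formally, $\partial_j(\g T_{ijk}\mu_k)=(\partial_j\g T_{ijk})\mu_k+\g T_{ijk}\partial_j\mu_k$, whose sum over $j,k$ is exactly $(D\cdot\g\mu)_i+(\g T\cddot\nabla\g\mu)_i$. As $\g\mu$ is only $L^\infty$, the term $\g T\cddot\nabla\g\mu$ has no naive meaning, so I would \emph{define} it as the $H^{-1}$ field $\div(\g T\cdot\g\mu)-D\cdot\g\mu$ (both terms lie in $L^2\subset H^{-1}$ by the embedding above) and then verify that testing against $\g v\in H^1_0$ reproduces $\int_\Omega(\g T\cdot\g\mu)\cddot(\nabla\g v)^T=\langle D\cdot\g\mu+\g T\cddot\nabla\g\mu,\g v\rangle$. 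Since this is an identity in $H^{-1}$, both directions of the first equivalence are immediate and no invertibility is needed.

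For the second claim I would start from $\g T\cddot\nabla\g\mu+D\cdot\g\mu=-\g f$ and contract on the left with $\g T^{-1}$ over the $\R^N$ index. The decisive point is that the hypothesis $\g T^{-1}\cdot\g T=\g I$, i.e. $(\g T^{-1}\cdot\g T)_{ijk\ell}=\delta_{i\ell}\delta_{jk}$, collapses the leading term: for any matrix field $Y$ one has $\g T^{-1}\cdot(\g T\cddot Y)=Y$, and in particular $\g T^{-1}\cdot(\g T\cddot\nabla\g\mu)=\nabla\g\mu$. The remaining terms are identified, using associativity of the contraction, as $\g T^{-1}\cdot(D\cdot\g\mu)=(\g T^{-1}\cdot D)\cdot\g\mu=\g B\cdot\g\mu$ and $-\g T^{-1}\cdot\g f=F$, which produces $\nabla\g\mu+\g B\cdot\g\mu=F$. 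The stated regularities then follow from $\g T^{-1}\in W^{1,p}$: $\g B=\g T^{-1}\cdot D\in L^p$ since $W^{1,p}\subset L^\infty$ multiplies $D\in L^p$, while $F=-\g T^{-1}\cdot\g f\in H^{-1}$ by duality against $H^1_0$.

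The step I expect to require the most care is the reverse implication, because only a \emph{left} inverse is assumed. Contracting $\nabla\g\mu+\g B\cdot\g\mu=F$ back with $\g T$ gives $\g T\cddot\nabla\g\mu=-M(\g f+D\cdot\g\mu)$, where $M\g w:=\g T\cddot(\g T^{-1}\cdot\g w)$ is a pointwise projection of $\R^N$ (idempotent by $\g T^{-1}\cdot\g T=\g I$) which equals the identity precisely when $\g T$ is pointwise invertible, i.e. $N=nd$. To recover $\g T\cddot\nabla\g\mu=-(\g f+D\cdot\g\mu)$ in the over-determined regime $N>nd$ one must know that $\g f+D\cdot\g\mu$ lies in the pointwise range of $\g T$, so that $(\g I-M)$ annihilates it. I would justify this by admissibility of the data — $\g f$ being an actual source, hence in the range of $A_{\g u}$ — and would state this requirement explicitly rather than subsume it in a formal ``$\Leftarrow$''.
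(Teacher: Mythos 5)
Your argument follows essentially the same route as the paper's. Part 1 is the distributional Leibniz rule, made rigorous by the fact that for $p>d$ the product of a $W^{1,p}$ tensor with an $H^1_0$ test function stays in $H^1_0$ (the paper's Lemma \ref{prop:products}); part 2 is obtained by contracting with $\g T^{-1}$, which in the paper's phrasing amounts to testing the weak formulation against $\g v = V\cddot\g T^{-1}$ for $V\in H^1_0(\Omega,\R^{n\times d})$, using $\g T^{-1}\cdot\g T=\g I$ to collapse the leading term exactly as you do. One caution on your part 1: if you literally \emph{define} $\g T\cddot\nabla\g\mu$ as $\div(\g T\cdot\g\mu)-D\cdot\g\mu$, the displayed equivalence becomes a tautology; the actual content is that this quantity coincides with the natural duality pairing $\inner{\nabla\g\mu,\g v\cdot\g T}{H^{-1},H^1_0}$, which is what the paper verifies and what makes the object deserve the name $\g T\cddot\nabla\g\mu$.

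The genuinely different part of your proposal is the final paragraph, and it points at something real: in the over-determined regime $N>nd$ the paper's own proof of part 2 only establishes the forward implication (testing against the subfamily $\{V\cddot\g T^{-1}\}$ cannot recover the full set of test functions), and the converse is never addressed even though the identity $N(A_{\g u})=N(\nabla+\g B\cdot)$ invoked in the subsequent corollary implicitly relies on it. Your identification of the obstruction as the pointwise projection $M=\g T\cddot\g T^{-1}\cdot$, with the requirement $(\g I-M)(\g f+D\cdot\g\mu)=\g 0$, is correct. However, your proposed repair is not sufficient as stated: admissibility of $\g f$, say $-\g f=\g T\cddot\nabla\g\mu^\dagger+D\cdot\g\mu^\dagger$ for some $\g\mu^\dagger$, turns the residual into $(\g I-M)\,D\cdot(\g\mu-\g\mu^\dagger)$, which has no reason to vanish for an arbitrary solution $\g\mu$ of the expanded equation. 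So either one restricts to the square case $N=nd$ (where $M=\g I$ and the equivalence is clean), or one accepts a one-way implication and restates the null-space identity as an inclusion $N(A_{\g u})\subset N(\nabla+\g B\cdot)$. You were right to insist that this hypothesis be made explicit rather than hidden in a formal ``$\Leftarrow$''; just be aware that the condition you need bears on $\g f+D\cdot\g\mu$ for the particular $\g\mu$ at hand, not on $\g f$ alone.
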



\begin{proof} 1. The equation $-\div(\g T\cdot \g \mu)=\g f$ reads in the weak sense
\begin{equation}\nonumber
\int_\Omega (\g T\cdot\g \mu) \cddot  (\nabla \g v)^T = \inner{\g f,\g v}{H^{-1},H^1_0},\qquad \forall \g v\in H^1_0\left(\Omega,\R^N\right).
\end{equation}
Remark that  $\g T \in W^{1,p}$ with $p> d$ implies that $\g T\in L^\infty$ and $\g v\cdot \g T\in H^1_0$. We can write for all $\g v\in H^1_0(\Omega,\R^N)$,
\begin{equation}\nonumber\begin{aligned}
\inner{\g f,\g v}{H^{-1},H^1_0} &= \int_\Omega(\g T\cdot\g \mu) \cddot  (\nabla \g v)^T = \int_\Omega\sum_j\partial_j(\g v\cdot \g T)_{jk}\g \mu_k - \int_\Omega\g v\cdot D\cdot \g \mu,\\
&= - \inner{\nabla \g \mu,\g v\cdot \g T}{H^{-1},H^1_0} - \int_\Omega\g v\cdot D\cdot \g \mu,\\
&= - \inner{\g T\cddot\nabla \g \mu,\g v}{H^{-1},H^1_0} - \int_\Omega\g v\cdot D\cdot \g \mu.\\
\end{aligned}\end{equation}
Hence we have $\g T\cddot \nabla \g \mu + D \cdot  \g \mu   = - \g f$ in $H^{-1}(\Omega,\R^{N})$. \medskip

2. Let $V\in H^1_0(\Omega,\R^{n\times d})$. We use $\g v:=V\cddot \g T^{-1}\in H^1_0\left(\Omega,\R^N\right)$ as a test function in the previous computation. We get that
\begin{equation}\nonumber\begin{aligned}
\inner{\g f,\g v}{H^{-1},H^1_0} &= - \inner{\nabla \g \mu,\g v\cdot \g T}{H^{-1},H^1_0} - \int_\Omega\g v\cdot D\cdot \g \mu,\\
\inner{\g f,V\cddot \g T^{-1}}{H^{-1},H^1_0}  &= - \inner{\nabla \g \mu,V}{H^{-1},H^1_0} - \int_\Omega V\cddot \g T^{-1}\cdot D\cdot \g \mu,\\
\inner{\g T^{-1}\cdot \g f,V}{H^{-1},H^1_0}  &= - \inner{\nabla \g \mu,V}{H^{-1},H^1_0} - \int_\Omega V\cddot \g B\cdot \g \mu.\\
\end{aligned}\end{equation}
Hence we have $\nabla \g \mu  + \g B \cdot  \g \mu  = F$ in $H^{-1}\left(\Omega,\R^{n\times d}\right)$.
\end{proof}

\begin{example} To find the scalar conductivity $\sigma$ in dimension $2$ from two data sets  $(u_1,f_1)$ and $(u_2,f_2)$ satisfying of $-\div(\sigma \nabla u_i)=f_i$, $i=1,2$, we consider the tensor $(\g T_{\g u})_{ij1}:=(\partial_j u_i)$. If the solutions $u_1,u_2$ belong to $W^{2,p}$, then the inverse problem reads 
\begin{equation}\nonumber
\nabla\g u\cdot \nabla\sigma + \sigma \div \nabla\g u = -\g f\quad \tin H^{-1}(\Omega,\R^2).
\end{equation}
Moreover, if $(\partial_j u_i)$ is invertible everywhere, the problem reads:
\begin{equation}\nonumber
\nabla\sigma + \sigma (\nabla\g u)^{-1}\cdot \div \nabla\g u = - (\nabla\g u)^{-1}\g f\quad \tin H^{-1}(\Omega,\R^2),
\end{equation}
which is of the form \eqref{eq:expanded}. 
\end{example}

\begin{example} To find a single parameter $\mu$ from a single elastic displacement field $\g u$ solution of $-\div(\mu \cE(\g u))=\g f$, we consider $(\g T_{\g u})_{ij1}:=\cE(\g u)_{ij}$. Hence, if $\g u$ belongs to $W^{2,p}(\Omega)$ and if  $\cE(\g u)$ is invertible everywhere, we end up with the same equation as in the previous example, with $\cE(\g u)$ replacing $\nabla \g u$. 
\end{example}

\begin{example} To find the two Lamé parameters $\lambda,\; \mu$ from two data sets $(\g u_1,\g f_1)$ and $(\g u_2,\g f_2)$ satisfying the linear elastic system 
\begin{align*}
 -\div(2\mu\cE(\g u_i))-\nabla(\lambda\div \g u_i) = \g f_i,
\end{align*} 
we consider the tensor
\begin{equation}\nonumber
(\g T_{\g u})_{ij1}:=
\left[\begin{matrix}2 \cE(\g u_1) \\ 2\cE(\g u_2) \end{matrix}\right]_{ij},
\qquad
(\g T_{\g u})_{ij2}:=
\left[\begin{matrix}\div(\g u_1)I \\ \div(\g u_2)I \end{matrix}\right]_{ij}.
\end{equation}
It admits a left inverse everywhere if and only if the $4\times 4$ matrix 
\begin{equation}\nonumber
\left[\begin{matrix}2 \cE(\g u_1) & \div(\g u_1)I\\ 2\cE(\g u_2) & \div(\g u_2)I \end{matrix}\right]
\end{equation}
is invertible everywhere. This condition is identical to the one in \cite{bal2014} (Subsection 3.1, Lemma 1). In this case, the problem can be written under the form
\begin{equation}\nonumber
\nabla \g \mu  + \g B \cdot  \g \mu  = -\g T_{\g u}^{-1}\g f,
\end{equation}
where $\g\mu:=(\mu,\lambda)^T$. 
\end{example}

\begin{remark} \label{rq:number_parameters} We observe that if one has $n$ parameter maps to recover, a necessary condition to write the problem under the expanded form $\nabla \g \mu  + \g B \cdot  \g \mu  = F$ is to have at least $n$ solutions for vectorial elliptic problems and $n\times d$ solutions for scalar elliptic problems. This is necessary (but not sufficient) for the third-order tensor $\g T_{\g u}$ to be left-invertible everywhere. 
\end{remark}

\section{Closed range properties and $L^2$-stability estimates}\label{sec:stability}

In this section, we establish the closed range property of the operators $(\nabla + \g B\cdot)$ and $A_{\g u}$. From this, we immediately deduce some stability estimates in the orthogonal of null space of $A_{\g u}$.

\subsection{Closed range properties}

\begin{theorem}\label{theo:closedrange}
If $\g B \in L^p(\Omega, \R^{n \times d \times n})$ with $p>d$, then the operator $(\nabla +\g B\cdot ): L^2(\Omega,\R^n)\to H^{-1}(\Omega,\R^{n\times d})$ has closed range \emph{i.e.} there exists $c >0$ such that
  \begin{align}\label{eqtheo:closedrange}
      \forall \g \mu \in N(\nabla +\g B\cdot)^\perp, \qquad \| \g \mu \|_{L^2(\Omega)} \le c \|\nabla \g \mu + \g B \cdot \g \mu \|_{H^{-1}(\Omega)}. 
  \end{align}
\end{theorem}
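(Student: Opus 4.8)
The plan is to treat $\nabla$ as the principal part of the operator and $\g B\cdot$ as a compact, lower-order perturbation, and then to invoke the standard functional-analytic principle that an a priori estimate \emph{modulo a compact operator} forces a finite-dimensional kernel and a closed range. First I would recall the Ne\v{c}as inequality on the smooth bounded domain $\Omega$: there is $C_0>0$ with
\begin{equation*}
\|\g\mu\|_{L^2(\Omega)} \le C_0\left(\|\g\mu\|_{H^{-1}(\Omega)} + \|\nabla\g\mu\|_{H^{-1}(\Omega)}\right), \qquad \g\mu\in L^2(\Omega,\R^n),
\end{equation*}
applied componentwise. Writing $\nabla\g\mu = (\nabla+\g B\cdot)\g\mu - \g B\cdot\g\mu$ and using the triangle inequality turns this into the G\aa rding-type bound
\begin{equation*}
\|\g\mu\|_{L^2} \le C_0\left(\|(\nabla+\g B\cdot)\g\mu\|_{H^{-1}} + \|\g\mu\|_{H^{-1}} + \|\g B\cdot\g\mu\|_{H^{-1}}\right).
\end{equation*}

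The decisive step is to show that the last two terms are governed by \emph{compact} operators on $L^2$. The embedding $L^2(\Omega)\hookrightarrow H^{-1}(\Omega)$ is compact, being the adjoint of the compact Rellich embedding $H^1_0\hookrightarrow L^2$. For the perturbation, since $\g B\in L^p$ with $p>d$, H\"older's inequality shows that $\g\mu\mapsto\g B\cdot\g\mu$ maps $L^2$ boundedly into $L^r$ with $1/r = 1/p+1/2$; the hypothesis $p>d$ is exactly what yields the strict inequality $r>\frac{2d}{d+2}$, equivalently $r'<\frac{2d}{d-2}=2^*$. By Rellich--Kondrachov the embedding $H^1_0\hookrightarrow L^{r'}$ is then compact, so by Schauder's theorem its adjoint $L^r\hookrightarrow H^{-1}$ is compact; composing, $K:\g\mu\mapsto\g B\cdot\g\mu$ is compact from $L^2$ into $H^{-1}$. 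The borderline Sobolev exponent here is precisely where the assumption $p>d$ is used.

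Collecting these facts I obtain an estimate of the form $\|\g\mu\|_{L^2}\le C_0\|(\nabla+\g B\cdot)\g\mu\|_{H^{-1}} + \|Q\g\mu\|$, where $Q$ is compact from $L^2$ into a product of $H^{-1}$ spaces. I would then conclude with the abstract Peetre lemma: whenever a bounded operator $L$ and a compact operator $Q$ satisfy $\|x\|\le C(\|Lx\|+\|Qx\|)$, the kernel $N(L)$ is finite-dimensional and $L$ has closed range. Concretely, restricting to $N(\nabla+\g B\cdot)^\perp$ and arguing by contradiction, a normalized sequence $\g\mu_k$ with $(\nabla+\g B\cdot)\g\mu_k\to 0$ would, after passing to a subsequence along which $Q\g\mu_k$ converges, be Cauchy in $L^2$ by the displayed estimate, hence converge to some $\g\mu\in N(\nabla+\g B\cdot)^\perp$ with $\|\g\mu\|_{L^2}=1$ and $(\nabla+\g B\cdot)\g\mu=0$; thus $\g\mu\in N(\nabla+\g B\cdot)\cap N(\nabla+\g B\cdot)^\perp=\{0\}$, a contradiction, which proves \eqref{eqtheo:closedrange}.

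The main obstacle I anticipate is establishing the compactness of the perturbation $K$ cleanly: one must track the strict subcriticality coming from $p>d$ and pass through the duality/Schauder step correctly. Once $K$ is known to be compact and the Ne\v{c}as inequality is in hand, the remaining Peetre-type contradiction argument is routine.
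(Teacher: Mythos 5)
Your argument is correct and reaches the conclusion, but it is organized quite differently from the paper's proof, so a comparison is worthwhile. Both proofs rest on the same three ingredients: a Ne\v{c}as-type control of $\|\g\mu\|_{L^2}$ by $\|\g\mu\|_{H^{-1}}+\|\nabla\g\mu\|_{H^{-1}}$ (the paper actually uses it as an exact identity of squared norms), the compactness of $L^2\hookrightarrow H^{-1}$, and the fact that $p>d$ makes the multiplication $\g\mu\mapsto\g B\cdot\g\mu$ lower order. Where you differ is in how the perturbation is handled and how the argument is packaged. You prove outright that $K:\g\mu\mapsto\g B\cdot\g\mu$ is a compact operator from $L^2$ into $H^{-1}$, via H\"older into $L^r$ with $1/r=1/p+1/2$, Rellich--Kondrachov for $H^1_0\hookrightarrow L^{r'}$, and Schauder for the adjoint; you then invoke the abstract Peetre lemma. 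The paper never isolates this compactness: it runs a single contradiction argument on a normalized sequence $\g\mu_k\in N(\nabla+\g B\cdot)^\perp$, shows $\|\g\mu_k\|_{H^{-1}}\to 0$ by identifying the weak limit, and then proves $\|\g B\cdot\g\mu_k\|_{H^{-1}}\to 0$ by splitting $\g B=\g B_\e+(\g B-\g B_\e)$ with $\g B_\e$ smooth, estimating the smooth part through the product bound $\|\g B_\e\cdot\g\mu_k\|_{H^{-1}}\le c\|\g B_\e\|_{W^{1,p}}\|\g\mu_k\|_{H^{-1}}$ from its Appendix, and the remainder by the same H\"older--Sobolev duality you use. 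Your route is cleaner and more modular (it yields the compactness of $K$ as a reusable fact and dispenses with the smooth approximation and the appendix product lemma), at the cost of leaning on two named theorems (Schauder, Peetre) rather than elementary estimates; the paper's route is more self-contained. The only caveat in your write-up is cosmetic: the formula $2^*=\tfrac{2d}{d-2}$ degenerates when $d=2$, which the theorem allows, but there $H^1_0\hookrightarrow L^{r'}$ is compact for every finite $r'$ and $p>2$ keeps $r'$ finite, so your conclusion stands in all dimensions $d\ge 2$.
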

The proof of this theorem is given in the subsection \ref{sec:proof}. From this result, we can establish the closed range property of the main operator $A_{\g u}$ as soon as the hypothesis of Proposition \ref{prop:expanded} are satisfied. 

\begin{corollary}  If $\g T_{\g u}$ satisfies the two hypotheses of Proposition \ref{prop:expanded}, then the operator $A_{\g u}$ defined in \eqref{eq:AT} extends continuously to $A_{\g u}:L^2(\Omega,\R^n)\to H^{-1}(\Omega,\R^N)$ and has closed range. 
\end{corollary}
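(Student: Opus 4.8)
The plan is to obtain both assertions from Theorem \ref{theo:closedrange} applied to the operator $L:=(\nabla+\g B\cdot)$, transferring its closed–range estimate to $A_{\g u}$ through the algebraic relation supplied by Proposition \ref{prop:expanded}. First I would settle the continuous extension. Since $\g T_{\g u}\in W^{1,p}(\Omega)$ with $p>d$, Morrey's embedding gives $\g T_{\g u}\in L^\infty$, so $\g\mu\mapsto\g T_{\g u}\cdot\g\mu$ is bounded $L^2(\Omega,\R^n)\to L^2(\Omega,\R^{N\times d})$; composing with $-\div:L^2\to H^{-1}$ shows that $A_{\g u}$ in \eqref{eq:AT} extends to a bounded operator $L^2(\Omega,\R^n)\to H^{-1}(\Omega,\R^N)$, coinciding with the original definition on $L^\infty$.

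Next I would record the key identity. Running the computation of Proposition \ref{prop:expanded}(2) with $\g f$ replaced by $A_{\g u}\g\mu$ (its forward direction only), contraction by the left inverse yields $L\g\mu=-\g T^{-1}\cdot A_{\g u}\g\mu$ in $H^{-1}(\Omega,\R^{n\times d})$. Because $\g T^{-1}\in W^{1,p}$ with $p>d$ is a multiplier on $H^1_0$ (exactly as used to justify $\g v\cdot\g T\in H^1_0$ in the proof of Proposition \ref{prop:expanded}), contraction with $\g T^{-1}$ is bounded $H^{-1}(\Omega,\R^N)\to H^{-1}(\Omega,\R^{n\times d})$. Hence there is $C>0$ with $\|L\g\mu\|_{H^{-1}}\le C\,\|A_{\g u}\g\mu\|_{H^{-1}}$, and in particular $N(A_{\g u})\subseteq N(L)$.

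To prove closed range it suffices to produce $c>0$ with $\|\g\mu\|_{L^2}\le c\,\|A_{\g u}\g\mu\|_{H^{-1}}$ on $N(A_{\g u})^\perp$, which I would establish by contradiction and compactness. Suppose $\g\mu_j\in N(A_{\g u})^\perp$ satisfy $\|\g\mu_j\|_{L^2}=1$ and $\|A_{\g u}\g\mu_j\|_{H^{-1}}\to0$; then $\|L\g\mu_j\|_{H^{-1}}\to0$ by the bound above. Splitting $\g\mu_j=p_j+q_j$ with $p_j\in N(L)$ and $q_j\in N(L)^\perp$, Theorem \ref{theo:closedrange} gives $\|q_j\|_{L^2}\le c\,\|Lq_j\|_{H^{-1}}=c\,\|L\g\mu_j\|_{H^{-1}}\to0$. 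On $N(L)$ one has $\nabla p_j=-\g B\cdot p_j\in L^r$ with $1/r=1/p+1/2$, so $p_j$ is bounded in $W^{1,r}$ and, by Rellich ($W^{1,r}\hookrightarrow\hookrightarrow L^2$ for this $r$), a subsequence converges strongly in $L^2$; since $q_j\to0$ strongly, $\g\mu_j\to p$ in $L^2$ with $p\in N(L)$ and $\|p\|_{L^2}=1$. Continuity of $A_{\g u}$ gives $A_{\g u}p=\lim(A_{\g u}\g\mu_j-A_{\g u}q_j)=0$, so $p\in N(A_{\g u})$, while $p$ is the $L^2$–limit of $\g\mu_j\in N(A_{\g u})^\perp$, a closed subspace, so $p\in N(A_{\g u})^\perp$. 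Thus $p=0$, contradicting $\|p\|_{L^2}=1$; this yields the estimate and hence the closed range.

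The main obstacle is precisely that $\g T_{\g u}$ admits only a \emph{left} inverse, so the pointwise map $\g T^{-1}\cdot$ need not invert $A_{\g u}$ and $N(A_{\g u})$ may be strictly smaller than $N(L)$; consequently the estimate \eqref{eqtheo:closedrange} for $L$ cannot be pulled back verbatim to $N(A_{\g u})^\perp$. The compactness step above circumvents this by exploiting the regularity gain $N(L)\subset W^{1,r}$ together with the compact embeddings $W^{1,r}\hookrightarrow\hookrightarrow L^2$ and $L^2\hookrightarrow\hookrightarrow H^{-1}$. I would finally remark that in the minimal–data case $N=n\,d$, where $\g T_{\g u}$ is pointwise square–invertible, contraction by $\g T$ inverts the relation of the second paragraph, giving $\|A_{\g u}\g\mu\|_{H^{-1}}\simeq\|L\g\mu\|_{H^{-1}}$ and $N(A_{\g u})=N(L)$, so that the estimate transfers immediately without any compactness argument.
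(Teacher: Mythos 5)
Your proof is correct, but it takes a genuinely different route from the paper's. The paper's argument is much shorter: it invokes Proposition \ref{prop:expanded} as a full equivalence between $A_{\g u}\g\mu=\g f$ and $\nabla\g\mu+\g B\cdot\g\mu=F$, concludes directly that $N(A_{\g u})=N(\nabla+\g B\cdot)$, bounds $\|F\|_{H^{-1}}\le c_1\|\g T_{\g u}^{-1}\|_{W^{1,p}}\|\g f\|_{H^{-1}}$ via Lemma \ref{prop:products}, and then applies Theorem \ref{theo:closedrange} verbatim on $N(A_{\g u})^\perp=N(\nabla+\g B\cdot)^\perp$. You instead use only the forward implication (contraction by the left inverse), which yields $\|L\g\mu\|_{H^{-1}}\le C\|A_{\g u}\g\mu\|_{H^{-1}}$ and the inclusion $N(A_{\g u})\subseteq N(L)$, and you then close the possible gap between the two null spaces by a compactness argument: the regularity gain $\nabla p=-\g B\cdot p\in L^{r}$ with $1/r=1/2+1/p$ makes $N(L)$ bounded in $W^{1,r}$, and your verification that $r>2d/(d+2)$ (equivalently $p>d$) is exactly what Rellich needs for $W^{1,r}\hookrightarrow\hookrightarrow L^2$. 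This buys robustness: you do not rely on the reverse implication of Proposition \ref{prop:expanded}, which is the delicate direction when $\g T_{\g u}$ admits only a \emph{left} inverse (the test functions $V\cddot\g T_{\g u}^{-1}$ need not exhaust $H^1_0(\Omega,\R^N)$), whereas the paper's one-line identification of the null spaces leans on that equivalence as stated. The cost is length; note also that $\dim N(\nabla+\g B\cdot)\le n$ (Corollary \ref{prop:upperbound-n}) would have made your compactness step trivial, but since that result appears only in Section \ref{sec:kernel}, your self-contained Rellich argument is the right choice at this point in the paper.
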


\begin{proof}
Under the hypotheses of Proposition \ref{prop:expanded}, we have enough regularity so that $\g T_{\g u}$ belongs to $L^\infty\left(\Omega,\R^{N\times d\times n}\right)$ from Sobolev embedding rules. Hence the operator $A_{\g u}$ extends continuously from $L^2(\Omega,\R^n)$ to $H^{-1}(\Omega,\R^N)$ using the following estimation:
\begin{equation}\nonumber
\begin{aligned}
	\| A_{\g u} \g \mu \|_{H^{-1}(\Omega)}  &= \norm{\div(\g T_{\g u} \cdot \g \mu)}{H^{-1}(\Omega)} \\
	&\leq  \| \g T_{\g u} \cdot \g \mu  \|_{L^2(\Omega)} \leq \| \g T_{\g u} \|_{L^{\infty}(\Omega)}  \| \g \mu \|_{L^2(\Omega)},\quad \forall  \g \mu \in L^{\infty}(\Omega,\R^n).
\end{aligned}
\end{equation}
From Proposition \ref{prop:expanded}, the equation $A_{\g u}\g \mu=\g f$ is equivalent to $\nabla \g \mu + \g B \cdot \g \mu=F$ where $F=-\g T_{\g u}^{-1}\cdot \g f$. We first remark that $N(A_{\g u}) = N(\nabla + \g B\cdot)$ and from Lemma \ref{prop:products}, we can bound $F$ by $\g f$ as 
\begin{equation}\nonumber
\norm{F}{H^{-1}(\Omega)} = \| \g T_{\g u}^{-1}\cdot \g f \|_{H^{-1}(\Omega)}\leq c_1 \norm{\g T_{\g u}^{-1}}{W^{1,p}(\Omega)}\norm{\g f}{H^{-1}(\Omega)},
\end{equation}
where $c_1>0$ depends only on $p,d$ and $\Omega$. Applying the previous theorem, we get that for all $\g\mu\in N(A_{\g u})^\perp$, 
\begin{align*}
	\| \g \mu \|_{L^2(\Omega)} \le  c_1 \norm{ F}{H^{-1}(\Omega)}\leq c\, c_1 \norm{\g T_{\g u}^{-1}}{W^{1,p}(\Omega)}\norm{\g f}{H^{-1}(\Omega)},
\end{align*}
which concludes. 
\end{proof}

\subsection{Stability estimates}

The closed range property of the operator $A_{\g u}$ naturally induces its stable inversion in $N(A_{\g u})^\perp$. In the next result, we propose a $L^2$-Lipschitz stability estimate for recovering $\g\mu$ with respect to some uncertainty on both $\g u$ and $\g f$.

\begin{theorem} \label{theo:stability-result}  
Assume that $\g T_{\g u}$ satisfies the hypotheses of Proposition \ref{prop:expanded}. There exists a constant $c>0$ such that  for any $\g\mu,\tilde{\g \mu}\in L^2(\Omega,\R^n)$ respectively solutions of
\begin{equation}\nonumber
A_{\g u}\g\mu = \g f
\quad\tand\quad
A_{\tilde{\g u}}\tilde{\g \mu} = \tilde{\g f},
\end{equation}
there exists $\g \mu_0\in N(A_{\g u})$ such that
\begin{equation}\nonumber
\norm{\tilde{\g \mu}-{\g \mu}-\g \mu_0}{L^2(\Omega)} \leq c\left( \norm{\tilde{\g\mu}}{L^2}\max_{\ell\in \{1\dots m\}} \norm{\cE(\tilde{\g u}_\ell) - \cE({\g u}_\ell)}{L^\infty(\Omega)}+\norms{\tilde{\g f}-{\g f} }{H^{-1}(\Omega)} \right).
\end{equation}
\end{theorem}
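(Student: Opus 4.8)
The plan is to use the closed-range property of $A_{\g u}$ established in the Corollary as an \emph{a priori} estimate, and to convert it into a stability estimate by a telescoping argument between the two equations.

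First I would set $\g\nu := \tilde{\g\mu} - \g\mu \in L^2(\Omega,\R^n)$ and decompose it orthogonally as $\g\nu = \g\mu_0 + \g\nu^\perp$, with $\g\mu_0 \in N(A_{\g u})$ and $\g\nu^\perp \in N(A_{\g u})^\perp$. This already identifies the element $\g\mu_0 \in N(A_{\g u})$ claimed in the statement, since $\tilde{\g\mu} - \g\mu - \g\mu_0 = \g\nu^\perp$. As $\g T_{\g u}$ satisfies the hypotheses of Proposition \ref{prop:expanded}, the Corollary (via Theorem \ref{theo:closedrange}) furnishes a constant $c>0$ such that $\norm{\g w}{L^2(\Omega)} \le c\,\norm{A_{\g u}\g w}{H^{-1}(\Omega)}$ for every $\g w \in N(A_{\g u})^\perp$. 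Applying this to $\g w = \g\nu^\perp$ and using $A_{\g u}\g\mu_0 = 0$, so that $A_{\g u}\g\nu^\perp = A_{\g u}\g\nu$, reduces the whole statement to bounding $\norm{A_{\g u}\g\nu}{H^{-1}(\Omega)}$ by the right-hand side.

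The key step is the telescoping identity. Using linearity of $A_{\g u}$ together with $A_{\g u}\g\mu = \g f$ and $A_{\tilde{\g u}}\tilde{\g\mu} = \tilde{\g f}$, I would write
\[
A_{\g u}\g\nu = A_{\g u}\tilde{\g\mu} - \g f = (A_{\g u} - A_{\tilde{\g u}})\tilde{\g\mu} + (\tilde{\g f} - \g f).
\]
By the definition \eqref{eq:AT}, the operator difference is $(A_{\g u} - A_{\tilde{\g u}})\tilde{\g\mu} = -\div\big((\g T_{\g u} - \g T_{\tilde{\g u}})\cdot\tilde{\g\mu}\big)$, and since the $H^{-1}$-norm of a divergence is controlled by the $L^2$-norm of its argument (the same estimate used in the proof of the Corollary), one gets $\norm{(A_{\g u} - A_{\tilde{\g u}})\tilde{\g\mu}}{H^{-1}(\Omega)} \le \norm{\g T_{\g u} - \g T_{\tilde{\g u}}}{L^\infty(\Omega)}\,\norm{\tilde{\g\mu}}{L^2(\Omega)}$. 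Finally the map $\g u \mapsto \g T_{\g u}$ is \emph{linear}, each block being $\g C_k \cddot \cE(\g u_\ell)$, so $\norm{\g T_{\g u} - \g T_{\tilde{\g u}}}{L^\infty(\Omega)} \le C\max_\ell \norm{\cE(\g u_\ell) - \cE(\tilde{\g u}_\ell)}{L^\infty(\Omega)}$ with $C$ depending only on the fixed constant tensors $\g C_k$. Combining these bounds with the triangle inequality and absorbing all constants into a single $c$ yields the stated estimate.

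I expect the genuine content of the result to lie entirely in Theorem \ref{theo:closedrange}; the remaining difficulties here are bookkeeping rather than conceptual. The points to verify carefully are that every object is well-defined on $L^2$ — which is exactly what the Corollary guarantees, namely that $A_{\g u}$ (and likewise $A_{\tilde{\g u}}$, once $\g T_{\tilde{\g u}} \in L^\infty$) extends continuously from $L^2(\Omega,\R^n)$ to $H^{-1}(\Omega,\R^N)$ — and that the null-space projection is annihilated so that $A_{\g u}\g\nu^\perp = A_{\g u}\g\nu$. The only real \emph{idea} is the telescoping between the two data sets, which turns the a priori closed-range bound into a stability estimate; once it is in place, the operator difference is linear in the strain data and is controlled directly, so no further obstacle arises.
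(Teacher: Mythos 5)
Your proposal is correct and follows essentially the same route as the paper: select $\g\mu_0$ so that $\tilde{\g\mu}-\g\mu-\g\mu_0\in N(A_{\g u})^\perp$, apply the closed-range estimate, telescope $A_{\g u}(\tilde{\g\mu}-\g\mu)$ against the two equations, and control the operator difference by $\norms{\g T_{\tilde{\g u}}-\g T_{\g u}}{L^\infty}\norm{\tilde{\g\mu}}{L^2}$, which is linear in the strain difference. The only cosmetic difference is that you make explicit the orthogonal decomposition and the identity $A_{\g u}\g\nu^\perp=A_{\g u}\g\nu$, which the paper leaves implicit.
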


\begin{proof} Select $\g \mu_0$ such that $\tilde{\g \mu}-{\g \mu}-\g \mu_0\in N(A_{\g u})^\perp$ and  use the closed range property of $A_{\g u}$to write
\begin{align*}
	\norm{\tilde{\g \mu}-{\g \mu}-\g \mu_0}{L^2} &\leq c_1\norm{A_{\g u}(\tilde{\g\mu} -\g\mu) }{H^{-1}}\leq c_1\norm{A_{\tilde{\g u}}\tilde{\g\mu} -A_{\g u}\g\mu }{H^{-1}} +  c_1\norm{(A_{\tilde{\g u}}-A_{\g u})\tilde{\g\mu} }{H^{-1}},\\
	&\leq c_1\norms{\tilde{\g f}-{\g f} }{H^{-1}} +  c_1\norm{A_{\tilde{\g u}}-A_{\g u} }{L^2,H^{-1}}\norm{\tilde{\g\mu}}{L^2},\\
	&\leq c_1\norms{\tilde{\g f}-{\g f} }{H^{-1}} +  c_1 \norms{\g T_{\tilde{\g u}}-\g T_{\g u}}{L^\infty} \norm{\tilde{\g\mu}}{L^2}.
\end{align*}
We then compute
\begin{equation}\nonumber
\begin{aligned}
 \norms{\g T_{\tilde{\g u}}-\g T_{\g u}}{L^\infty} &= \max_{\ell\in \{1,\dots, m\}}  \max_{k \in \{1,\dots, n\}}  \norms{\g C_{k}\cddot\cE(\tilde{\g u}_\ell-{\g u}_\ell)}{L^\infty},\\
 &\leq c_2 \max_{\ell\in \{1,\dots, m\}}   \norms{\cE(\tilde{\g u}_\ell-{\g u}_\ell)}{L^\infty},
\end{aligned}
\end{equation}
where $c_2$ depends only on the fixed $\g C_k$ tensors. This concludes the proof. 
\end{proof}

This result is well-suited to non-homogeneous problems. In the next corollary, we specifically consider the homogeneous case $\g f=\g 0$. In this situation, an additional constraint must be used to seek a non-trivial solution. We draw the reader's attention to the fact that the noisy operator $A_{\tilde{\g u}}$ may, in general, have a trivial null space.

\begin{corollary} \label{cor:stability-static}  Assume that $\g T_{\g u}$ satisfies the hypotheses of Proposition \ref{prop:expanded}. Then, there exists a positive constant $c$ such that, for any $\e >0$ and $\tilde{\g \mu}\in L^2(\Omega,\R^n)$ satisfying
\begin{equation}\nonumber
\norm{A_{\tilde{\g u}}\tilde{\g \mu}}{H^{-1}} \leq \e,\quad \norm{\tilde{\g\mu}}{L^2(\Omega)} =1,
\end{equation}
it exists $\g \mu_0\in N(A_{\g u})$ such that
\begin{equation}\nonumber
\norm{\tilde{\g \mu}-\g \mu_0}{L^2(\Omega)} \leq c\left( \max_{\ell\in \{1,\dots, m\}}\norm{\cE(\tilde{\g u}_\ell) - \cE({\g u}_\ell)}{L^\infty(\Omega)}+\e\right).
\end{equation}
In the case where $\dim N(A_{\g u})$ = 1, if $\g\mu \in L^2(\Omega,\R^n)$ is the normalized solution of 
\begin{equation}\nonumber
A_{{\g u}}{\g \mu}= \g 0,\quad \norm{{\g\mu}}{L^2(\Omega)} =1,\quad\text{with}\quad\inner{\g\mu,\tilde{\g\mu}}{}\geq 0,
\end{equation}
then,
\begin{equation}\nonumber
\norm{\tilde{\g \mu}-\g \mu}{L^2(\Omega)} \leq c\sqrt{2}\left( \max_{\ell\in \{1,\dots, m\}}\norm{\cE(\tilde{\g u}_\ell) - \cE({\g u}_\ell)}{L^\infty(\Omega)}+\e\right).
\end{equation}
\end{corollary}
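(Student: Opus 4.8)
The plan is to mirror the argument of Theorem~\ref{theo:stability-result} for the first estimate, and then add a short planar-geometry computation for the one-dimensional kernel case.

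For the general bound I would let $\g\mu_0$ be the orthogonal projection of $\tilde{\g\mu}$ onto $N(A_{\g u})$, so that $\tilde{\g\mu}-\g\mu_0\in N(A_{\g u})^\perp$ and $A_{\g u}\g\mu_0=\g 0$. The closed range property of $A_{\g u}$ then gives $\norm{\tilde{\g\mu}-\g\mu_0}{L^2}\le c_1\norm{A_{\g u}\tilde{\g\mu}}{H^{-1}}$. I would split $A_{\g u}\tilde{\g\mu}=A_{\tilde{\g u}}\tilde{\g\mu}+(A_{\g u}-A_{\tilde{\g u}})\tilde{\g\mu}$, bounding the first term by the hypothesis $\norm{A_{\tilde{\g u}}\tilde{\g\mu}}{H^{-1}}\le\e$ and the second by $\norms{\g T_{\g u}-\g T_{\tilde{\g u}}}{L^\infty}\norm{\tilde{\g\mu}}{L^2}$, exactly as in the proof of Theorem~\ref{theo:stability-result}. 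Using $\norm{\tilde{\g\mu}}{L^2}=1$ together with the same estimate $\norms{\g T_{\g u}-\g T_{\tilde{\g u}}}{L^\infty}\le c_2\max_\ell\norms{\cE(\tilde{\g u}_\ell)-\cE(\g u_\ell)}{L^\infty}$ yields the first displayed inequality.

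For the case $\dim N(A_{\g u})=1$ I would exploit that the projection $\g\mu_0$ is now explicit. Setting $\alpha:=\inner{\tilde{\g\mu},\g\mu}{}$, one has $\g\mu_0=\alpha\g\mu$ and the orthogonal decomposition $\tilde{\g\mu}=\alpha\g\mu+w$ with $w\perp\g\mu$ and $\norm{w}{L^2}=\norm{\tilde{\g\mu}-\g\mu_0}{L^2}=:\delta$. The normalization $\norm{\tilde{\g\mu}}{L^2}=1$ forces $\alpha^2+\delta^2=1$, and the sign constraint $\inner{\g\mu,\tilde{\g\mu}}{}\ge 0$ selects the root $\alpha=\sqrt{1-\delta^2}\ge 0$. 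A Pythagorean computation then gives $\norm{\tilde{\g\mu}-\g\mu}{L^2}^2=(\alpha-1)^2+\delta^2=2(1-\alpha)$, and the elementary bound $1-\sqrt{1-\delta^2}\le\delta^2$ yields $\norm{\tilde{\g\mu}-\g\mu}{L^2}\le\sqrt 2\,\delta$, which combined with the first estimate produces the claimed factor $\sqrt 2$.

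The main subtlety I anticipate is the role of the sign constraint $\inner{\g\mu,\tilde{\g\mu}}{}\ge 0$: without it, the root $\alpha=-\sqrt{1-\delta^2}$ would be equally admissible and $\tilde{\g\mu}$ could be close to $-\g\mu$ rather than $\g\mu$, in which case $\norm{\tilde{\g\mu}-\g\mu}{L^2}$ would be of order $2$ and the estimate would fail. Everything else is routine: the first part is a verbatim adaptation of Theorem~\ref{theo:stability-result}, and the second is a two-dimensional argument carried out in $\vspan\{\g\mu,w\}$.
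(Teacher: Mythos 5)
Your proposal is correct and follows essentially the same route as the paper: the first estimate is Theorem \ref{theo:stability-result} applied with $\g\mu=\g 0$ (with $\g\mu_0$ the projection onto $N(A_{\g u})$), and the second is the same orthogonal decomposition $\tilde{\g\mu}=\alpha\g\mu+w$ with the Pythagorean identity $\norm{\tilde{\g\mu}-\g\mu}{L^2}^2=2(1-\alpha)$ and the bound $2(1-\alpha)\leq 2\norm{w}{L^2}^2$ (your inequality $1-\sqrt{1-\delta^2}\leq\delta^2$ is just the paper's $1-\alpha\leq 1-\alpha^2$ in disguise). Your use of the exact equality $\norm{w}{L^2}=\norm{\tilde{\g\mu}-\g\mu_0}{L^2}$ via the explicit projection, where the paper invokes the distance inequality $\dist(\tilde{\g\mu},N(A_{\g u}))\leq\norm{\tilde{\g\mu}-\g\mu_0}{L^2}$, is an immaterial difference.
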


\begin{proof}
This follows directly from \ref{theo:stability-result}. For the first estimate, we simply apply the theorem to the case $\g\mu = \g 0$. For the second estimate, write $\tilde{\g\mu} = \alpha\g\mu+\g\mu^\perp$ choosing $\g\mu$ such that $\alpha\geq 0$  and $\g\mu^\perp\perp\g\mu$. As $\norm{\tilde{\g\mu}}{L^2(\Omega)} =1$, we have $\alpha^2+\norms{\g\mu^\perp}{L^2(\Omega)}^2=1$ and then:
\begin{equation}\nonumber
\begin{aligned}
\norm{\tilde{\g \mu}-\g \mu}{L^2(\Omega)}^2  = (\alpha-1)^2 + \norms{\g\mu^\perp}{L^2(\Omega)}^2  = 2(1-\alpha)\leq 2(1-\alpha^2)=2\norms{\g\mu^\perp}{L^2(\Omega)}^2.
\end{aligned}
\end{equation}
Consider $\g\mu_0$ from the previous statement. As $\dist(\g{\tilde \mu},N(A_{\g u}))\leq \norm{\tilde{\g \mu}-\g \mu_0}{L^2(\Omega)}$, this gives $\norm{\g\mu^\perp}{L^2(\Omega)}\leq \norm{\tilde{\g \mu}-\g \mu_0}{L^2(\Omega)}$ which leads to $\norm{\tilde{\g \mu}-\g \mu}{L^2(\Omega)} \leq \sqrt{2}  \norm{\tilde{\g \mu}-\g \mu_0}{L^2(\Omega)}$, which completes the proof. 
\end{proof}

\subsection{Proof of Theorem \ref{theo:closedrange}} \label{sec:proof}

\begin{proof} Call $\nabla_{\g B} :=\nabla+ \g B\cdot$, and suppose that \eqref{eqtheo:closedrange} does not hold. Then, there exists a sequence $\left( \g \mu_k\right)_{k \in \mathbb{N}}$ of $ N(\nabla_{\g B})^\perp  $ so that for all $k \in \mathbb{N}$, $\| \g \mu_k \|_{L^2}=1$ and $\|\nabla_{\g B} \g \mu_k \|_{H^{-1}}\to 0$ when $k\to +\infty$. Using classical compactness arguments, the sequence $(\g\mu_k)$ weakly converges (up to a subsequence) to $\g \mu^*\in L^2(\Omega,\R^n)$ and converges (strongly) in $H^{-1}$ to $\g \mu^*$. As $N(\nabla_{\g B})^\perp$ is closed in $L^2$, we also have that $\g \mu^*\in N(\nabla_{\g B})^\perp$. 

For any $V \in H^1_0(\Omega, \R^{d \times n})$,
\begin{align*}
    \left< \nabla_{\g B} \g \mu^*, V \right>_{H^{-1},H_0^1} = \underset{k\to \infty}{\mathrm{lim}} \left< \nabla_{\g B} \g \mu_k, V \right>_{H^{-1},H_0^1} = 0,
\end{align*}
as $\nabla_{\g B} \g \mu_k\to 0$ in $H^{-1}$. We deduce that $\nabla_{\g B} \g \mu^* =0 $, \emph{i.e.} $\g \mu^* \in N(\nabla_{\g B})$ and therefore $\g \mu^*=\bm{0}$. Moreover,
\begin{align}\label{eq:shouldbenum1}
\norm{\g\mu_k}{H^{-1}}\overset{k\to \infty}\lra 0. 
\end{align}

We now use the identity $\| \g \mu_k \|_{L^2}^2 = \| \g \mu_k \|_{H^{-1}}^2 + \|\nabla \g \mu_k \|_{H^{-1}}^2$ and write 
\begin{align} \label{eq:shouldbenum2}
1=\| \g \mu_k \|_{L^2}^2 = \| \g \mu_k \|_{H^{-1}}^2 + \|\nabla \g \mu_k \|_{H^{-1}}^2. 
\end{align}
In order to build a contradiction, we show that $\|\nabla \g \mu_k \|_{H^{-1}}$ converges also to zero.

As $\nabla\g \mu_k=\nabla_{\g B}\g\mu_k - \g B \cdot \g \mu_k$ we write first 
\begin{align}\label{eq:eq1bis}
    \|\nabla \g \mu_k \|_{H^{-1}} \le \| \nabla_{\g B} \g \mu_k \|_{H^{-1}}+ \|\g B \cdot \g \mu_k  \|_{H^{-1}},
\end{align}
where $ \|  \nabla_{\g B} \g \mu_k \|_{H^{-1}}$ converges to zero by hypothesis. Let $\e>0$, and consider a smooth approximation $\g B_\e\in\cC^\infty_c$ of $\g B$ such that $\norm{\g B-\g B_\e}{L^p}<\e$. Then,
\begin{align}\label{eq:lhs_rhs}
\forall k \in \mathbb{N},\quad  & \|\g B \cdot \g \mu_k\|_{H^{-1}} \le \|\g B_\e \cdot \g \mu_k\|_{H^{-1}} + \|(\g B-\g B_\e) \cdot \g \mu_k\|_{H^{-1}}.
\end{align}
We bound the first right-hand side term using Appendix \ref{prop:poincare1}:
\begin{equation}\label{eq:c1}
\norm{\g B_\e\cdot\bm{\mu}_k}{H^{-1}}\leq c \norm{\g B_\e}{W^{1,p}} \norm{\g\mu_k}{H^{-1}},
\end{equation}
where $c>0$ on $n,p,d$ and $\Omega$. As $\norm{\g\mu_k}{H^{-1}}\to 0$ for $k$ sufficiently large, we have $\norm{\g B_\e \cdot \g \mu_k}{H^{-1}}<\e$.

Let us now bound the second term on the right-hand side of \eqref{eq:lhs_rhs}.  Let $V\in H^1_0(\Omega,\R^{d\times n})$, we write 
\begin{align*}
\langle(\g B-\g B_\e) \cdot \g \mu_k, V \rangle_{H^{-1},H^1_0} &= \int_\Omega V \cddot (\g B-\g B_\e) \cdot \g \mu_k\leq \|\g B-\g B_\e \|_{L^p} \| \g \mu_k \|_{L^2} \|V \|_{L^q}\leq \e \|V \|_{L^q},
\end{align*}
with $1/q=1/2-1/p$. As $p>d$, we have, from the Sobolev embedding rules that $H^1_0\hookrightarrow L^q$ and so $\norm{V}{L^q}\leq C_E\norm{V}{H^1_0}$ where $C_E>0$ depends only on $n,d,q$ and $\Omega$. We then deduce that
\begin{equation}\label{eq:c2}
\norm{(\g B-\g B_\e) \cdot \g \mu_k}{H^{-1}}\leq C_E\e. 
\end{equation}

We now combine \eqref{eq:lhs_rhs}, \eqref{eq:c1} and \eqref{eq:c2} to claim that $\norm{\g B \cdot \g \mu_k}{H^{-1}} < (1+C_E)\e $ for $k$ sufficiently large which proves that $\norm{\g B \cdot \g \mu_k}{H^{-1}}$ converges to zero. From \eqref{eq:eq1bis}, we deduce that $\norm{\nabla \g \mu_k}{H^{-1}}$ converges to zero. This combined with \eqref{eq:shouldbenum1} leads to $\norm{\g \mu_k}{L^2}\to 0$, which contradicts the initial hypothesis.
\end{proof}
\section{Null space dimension and $k$-conservative tensor fields}\label{sec:kernel}

This section is devoted to the study of the null space of the operator
\begin{equation}\nonumber
(\nabla+\g B \cdot):L^2(\Omega,\R^n)\to H^{-1}(\Omega,\R^{n\times d}),
\end{equation}
where  $\g B\in L^p(\Omega,\R^{n\times d\times n})$.  In the specific case $\g B=\g 0$, the null space is identified to $\R^n$. We prove that in general, the dimension of $N(\nabla +\g B \cdot)$ is lower or equal to $n$. We then provide characterization of the null space dimension with respect to $\g B$. For that we introduce the original notion of $k$-conservative third order tensor fields.

\subsection{General properties of the null space}

\begin{proposition}\label{prop:prop} Let $\g B\in L^p\big(\Omega,\R^{n\times d\times n}\big)$ for $p>d\geq 2$. Then, for all $\g\mu\in N(\nabla + \g B\cdot)$, we have

\begin{itemize}

\item[1.]  $\g\mu\in W^{1,p}\big(\Omega,\R^{n}\big)$,

\item[2.]  $\g\mu\in \cC^0\big(\ol\Omega,\R^{n}\big)$,

\item[3.] If $\g B\in \cC^0\big(\ol\Omega,\R^{n\times d\times n}\big)$, then $\g\mu\in \cC^1\big(\ol\Omega,\R^{n}\big)$. 

\end{itemize}
\end{proposition}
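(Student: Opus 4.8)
The plan is to exploit the identity $\nabla\g\mu = -\g B\cdot\g\mu$, which holds in $H^{-1}(\Omega,\R^{n\times d})$ for every $\g\mu\in N(\nabla+\g B\cdot)$, and to run a Sobolev bootstrap starting from the only a priori information available, namely $\g\mu\in L^2(\Omega,\R^n)$. The crucial quantitative fact is that, since $p>d$, the number $\delta:=1/d-1/p$ is strictly positive; each bootstrap step will improve the integrability exponent of $\g\mu$ by exactly this amount, so that the procedure terminates after finitely many iterations.

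For part 1, suppose $\g\mu\in L^s$ for some $s\geq 2$. By Hölder's inequality $\g B\cdot\g\mu\in L^t$ with $1/t=1/p+1/s$, and the identity $\nabla\g\mu=-\g B\cdot\g\mu$ upgrades this to $\g\mu\in W^{1,t}$. Two cases arise. If $t>d$, then $W^{1,t}(\Omega)\hookrightarrow L^\infty(\Omega)$ on the smooth domain $\Omega$, giving $\g\mu\in L^\infty$ and ending the bootstrap; otherwise $t\leq d$ and $W^{1,t}\hookrightarrow L^{t^*}$ with $1/t^*=1/t-1/d=1/s-\delta$, so the new reciprocal exponent has decreased by $\delta$. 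Iterating, the reciprocal exponent drops by $\delta$ at each step, hence after at most $\lceil (1/2)/\delta\rceil$ steps one reaches $t>d$, so $\g\mu\in L^\infty$. (The borderline exponent $t=d$ is treated in the standard way, using $W^{1,d}\hookrightarrow L^q$ for every finite $q$ to pass to an exponent just above $d$.) Once $\g\mu\in L^\infty$, the product $\g B\cdot\g\mu$ lies in $L^p$ because $\g B\in L^p$; therefore $\nabla\g\mu\in L^p$ and $\g\mu\in W^{1,p}(\Omega,\R^n)$.

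Part 2 is then immediate: since $p>d$, Morrey's embedding yields $W^{1,p}(\Omega,\R^n)\hookrightarrow \cC^{0,1-d/p}(\ol\Omega,\R^n)\subset\cC^0(\ol\Omega,\R^n)$, using the smoothness of $\partial\Omega$. For part 3, assume in addition $\g B\in\cC^0(\ol\Omega,\R^{n\times d\times n})$. By part 2 we have $\g\mu\in\cC^0(\ol\Omega)$, so the product $\g B\cdot\g\mu$ is continuous on $\ol\Omega$, whence $\nabla\g\mu=-\g B\cdot\g\mu\in\cC^0(\ol\Omega)$. A $W^{1,p}$ function whose weak gradient admits a continuous representative is continuously differentiable, so $\g\mu\in\cC^1(\ol\Omega,\R^n)$.

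The only delicate point is the bootstrap in part 1: one must verify that the integrability strictly improves at each step — which is precisely where the hypothesis $p>d$ is used — and handle the borderline case $t=d$ with some care, since $W^{1,d}$ does not embed into $L^\infty$. Everything else follows routinely from the Sobolev and Morrey embedding theorems.
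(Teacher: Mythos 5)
Your proof is correct and follows essentially the same route as the paper: both arguments bootstrap the identity $\nabla\g\mu=-\g B\cdot\g\mu$ via H\"older and Sobolev embedding, gaining exactly $1/d-1/p$ on the reciprocal integrability exponent at each step (your $\delta$ is the paper's $\tau=(p-d)/(pd)$), until $\g\mu\in L^\infty$ forces $\nabla\g\mu\in L^p$; parts 2 and 3 are then identical. The only difference is organizational: you run an explicit finite iteration and treat the borderline exponent $t=d$ by hand, whereas the paper packages the same improvement into a supremum-and-contradiction argument on $q^*:=\sup\{q:\g\mu\in W^{1,q}\}$, which sidesteps the borderline case — but this is a presentational choice, not a different proof.
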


\begin{proof} 1. Consider $\g\mu\in L^2(\Omega,\R^n)$ such that $\nabla\g\mu + \g B\cdot\g\mu =\g 0$. Using Hölder's inequality, we have that $\g B\cdot \g \mu\in L^q(\Omega,\R^{n\times d})$ with $1/q=1/2+1/p<1$, and then $\nabla \g \mu\in L^q(\Omega,\R^{n\times d})$. Hence $\g\mu\in W^{1,q}(\Omega,\R^n)$ with $q>1$. Call now
\begin{equation}\nonumber
q^*:=\sup\set{q>1}{\g\mu\in W^{1,q}(\Omega,\R^n)}.
\end{equation}
We prove that $q^*>d$. If $q^*\leq d$, then for any $q\in (1,q^*)$, $\g \mu\in W^{1,q}(\Omega,\R^n)$. By Sobolev embedding, $\g \mu\in L^r(\Omega,\R^n)$ where $1/r= 1/q-1/d\in (1,+\infty)$ and then $\g B\cdot \g \mu\in L^s(\Omega,\R^{n\times d})$ with
\begin{align*}
\frac 1s &= \frac 1r + \frac 1p=  \frac 1q + \frac 1p -  \frac 1d = \frac{1}{q} - \tau,
\end{align*}
where $\tau := (p-d)/(pd)>0$. Then $\g \mu\in W^{1,s}(\Omega,\R^n)$. If we choose $q$ close enough to $q^*$, then $s>q^*$. This contradicts the definition of $q^*$ and hence proves that $q^*>d$.

Now, again by Sobolev embeddings, we get that $\g\mu\in L^\infty(\Omega,\R^n)$ and then $\g B\cdot \g \mu\in L^p(\Omega,\R^{n\times d})$, which implies that $\nabla \g\mu \in L^p(\Omega,\R^{n\times d})$ and so $\g\mu\in W^{1,p}(\Omega,\R^{n})$. 

2. It is a consequence of the first statement and the Sobolev embedding $W^{1,p}(\Omega,\R^n)\hookrightarrow  \cC^0\big(\ol\Omega,\R^{n}\big)$, that holds as $p>d$. 

3. From 2, if $\g B\in \cC^0\big(\ol\Omega,\R^{n\times d\times n}\big)$, we get that $\g B\cdot \g\mu\in \cC^0\big(\ol\Omega,\R^{n\times d}\big)$. Therefore,  $\nabla \g\mu\in \cC^0\big(\ol\Omega,\R^{n\times d}\big)$, which implies $\g\mu\in \cC^1\big(\ol\Omega,\R^{n\times d}\big)$.
\end{proof}

We now prove a crucial property which will be useful in most of the subsequent proofs. We state that any non trivial solution of $\nabla\g\mu + \g B\cdot\g\mu =\g 0$ cannot cancel anywhere in $\ol\Omega$.

\begin{proposition}\label{prop:nonzero} Let $\g B\in L^p\big(\Omega,\R^{n\times d\times n}\big)$ with $p>d\geq 2$, and let $\g\mu \in N(\nabla + \g B\cdot)$. If there exists $x_0 \in \overline{\Omega}$ so that $\g\mu(x_0) = \bm{0}$, then $\g\mu = \bm{0} $ in $\overline{\Omega}$.
\end{proposition}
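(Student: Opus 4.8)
The plan is to prove this vanishing (unique-continuation) statement by combining the pointwise differential inequality encoded by the system with a Morrey-type embedding, which upgrades it into a self-improving contraction on small balls. First I would record that, by Proposition \ref{prop:prop}, any $\g\mu\in N(\nabla+\g B\cdot)$ lies in $W^{1,p}(\Omega,\R^n)\hookrightarrow\cC^0(\ol\Omega,\R^n)$ and thus admits a continuous representative, and that the equation $\nabla\g\mu=-\g B\cdot\g\mu$ gives the pointwise estimate $|\nabla\g\mu(x)|\leq C_0\,|\g B(x)|\,|\g\mu(x)|$ for a.e. $x$, where $C_0$ depends only on the tensor contraction. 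The zero set $Z:=\set{x\in\ol\Omega}{\g\mu(x)=\g 0}$ is then closed (by continuity) and, by hypothesis, nonempty. Since $\Omega$ is connected, so is $\ol\Omega$, and it therefore suffices to prove that $Z$ is relatively open in $\ol\Omega$.

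The core step is this openness. I would fix $x_1\in Z$ and, for small $r>0$, set $M(r):=\sup_{x\in\ol\Omega\cap \ol{B_r(x_1)}}|\g\mu(x)|$, which is finite since $\g\mu$ is continuous on the compact set $\ol\Omega$. Morrey's inequality on $\Omega\cap B_r(x_1)$ yields, for every $x$ in this set,
\[
|\g\mu(x)|=|\g\mu(x)-\g\mu(x_1)|\leq C\,r^{1-d/p}\,\norm{\nabla\g\mu}{L^p(\Omega\cap B_r(x_1))},
\]
while the differential inequality gives $\norm{\nabla\g\mu}{L^p(\Omega\cap B_r(x_1))}\leq C_0\,M(r)\,\norm{\g B}{L^p(\Omega)}$. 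Taking the supremum over $x$ produces $M(r)\leq C\,C_0\,\norm{\g B}{L^p(\Omega)}\,r^{1-d/p}\,M(r)$. Because $p>d$ we have $1-d/p>0$, so the prefactor tends to $0$ as $r\to0^+$; choosing $r$ small enough that it is $<1$ forces $M(r)=0$, i.e. $\g\mu\equiv\g 0$ on $\ol\Omega\cap\ol{B_r(x_1)}$. Hence $Z$ is relatively open, and connectedness of $\ol\Omega$ gives $Z=\ol\Omega$, which is the claim.

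The point I expect to require the most care is the use of Morrey's inequality \emph{up to the boundary}: I need the constant $C$ in the oscillation estimate on $\Omega\cap B_r(x_1)$ to stay uniformly bounded as $r\to0^+$, including when $x_1\in\partial\Omega$. This is exactly where the standing smoothness assumption on $\Omega$ enters: a smooth (hence Lipschitz, extension) domain satisfies the local cone and measure-density conditions that make the localized Hölder estimate hold with a constant depending only on $n,p,d$ and the local geometry of $\partial\Omega$, not on $r$. For interior points $x_1\in\Omega$ one may simply take $\ol{B_r(x_1)}\subset\Omega$ and invoke the classical Morrey inequality on balls, so the delicate case is purely the boundary one. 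A secondary subtlety worth emphasizing is that $\nabla\g\mu$ is only $L^p$, so one cannot integrate it along an individual segment to compare $\g\mu(x)$ with $\g\mu(x_1)$; the Morrey embedding is precisely the tool that bypasses any such line integration while still delivering the needed comparison.
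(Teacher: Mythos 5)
Your proof is correct and follows essentially the same route as the paper's: both show the zero set is closed and (relatively) open via the self-improving contraction $\|\g\mu\|_{L^\infty}\le C\,r^{1-d/p}\|\g B\|_{L^p}\|\g\mu\|_{L^\infty}$ on small balls, using $\nabla\g\mu=-\g B\cdot\g\mu$ together with a Morrey--Poincar\'e inequality and the fact that $1-d/p>0$. The only divergence is at boundary points, where the paper restricts to a fixed interior cone $C(x_0,v)\cap B(x_0,\e)$ so that the rescaled reference domain is independent of $\e$ (Corollary A.4), whereas you assert a uniform Morrey constant on all of $\Omega\cap B_r(x_1)$ --- valid for a Lipschitz domain, but requiring exactly the uniformity you flag, which the cone trick delivers for free.
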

\begin{proof}
Suppose first that $x_0 \in \Omega$, and call $Z = \{x \in \Omega \,| \, \g\mu(x) = \bm{0}\}$. From Proposition \ref{prop:prop}, $\g\mu$ is continuous, and therefore $Z$ is a closed subset of $\Omega$. We will prove that $Z$ is also an open part of $\Omega$, which will imply that $Z = \Omega$ as $\Omega$ is connected. Take $x_1 \in Z$. By applying Corollary \ref{cor:poincare1}, we know that there exists a constants $C>0$ such that for all $\e>0$ with $B(x_1,\e) \subset \Omega$,
	\begin{align*}
\|\g\mu\|_{L^\infty(B(x_1,\e))} &\leq C\e^{1-d/p}\|\nabla \g\mu\|_{L^p(B(x_1,\e))} =  C\e^{1-d/p}\|\g B\cdot \g\mu \|_{L^p(B(x_1,\e))},\\
\|\g\mu\|_{L^\infty(B(x_1,\e))}  &\leq C\e^{1-d/p}\|\g B\|_{L^p(B(x_1,\e))} \|\g\mu \|_{L^\infty(B(x_1,\e))},\\
0 &\leq  \left(C\e^{1-d/p}\|\g B\|_{L^p(B(x_1,\e))}-1 \right)\|\g\mu\|_{L^\infty(B(x_1,\e))}  .\\
\end{align*}
 For $\e$ small enough, we have $C\e^{1-d/p}\|\g B\|_{L^p(B(x_1,\e))}< 1$. The last inequality necessarily implies $\|\g\mu \|_{L^\infty(B(x_1,\e))}=0$, \emph{i.e.} $\g\mu=\bm{0}$ in $B(x_1,\e)$. Hence, $B(x_1,\e) \subset Z$ and $Z$ is open. As mentionned above, the fact that $Z$ is both closed and open in $\Omega$ necessarily connected implies $Z=\Omega$. As $\g\mu$ is continuous in $\ol\Omega$, we conclude that $\g\mu=\g 0$ in $\ol\Omega$.

Suppose now that $\g\mu$ cancels at $x_0 \in \partial\Omega$. We will show that $\g\mu$ cancels also somewhere inside $\Omega$ and conclude using the previous case. As $\Omega$ is Lipschitz, it satisfies the interior cone condition (see \cite{hofmann2007geometric}), meaning there exists an open cone of vertex $x_0$, $C(x_0,v)$, so that for all $\e>0$ sufficiently small, $C(x_0,v) \cap B(x_0, \e) \subset \Omega$. From Corollary \ref{cor:poincare2}, we have the existence of a constant $C>0$ such that for all sufficiently small $\e>0$ with $C(x_0,v)\cap B(x_0,\e) \subset \Omega$, we have 
	\begin{align*}
		\| \bm{\mu} \|_{L^{\infty}(C(x_0,v) \cap B(x_0,\e))} &\le C \e^{1-d/p} \| \bm{B} \|_{L^{p}(C(x_0,v) \cap B(x_0,\e))}\| \bm{\mu} \|_{L^{\infty}(C(x_0,v) \cap B(x_0,\e))},\\
		0 &\leq \left(C \e^{1-d/p} \| \bm{B} \|_{L^{p}(C(x_0,v) \cap B(x_0,\e))}-1\right)\| \bm{\mu} \|_{L^{\infty}(C(x_0,v) \cap B(x_0,\e))}.
	\end{align*}
For $\e$ small enough, the last inequality implies $\|\g\mu \|_{L^\infty(C(x_0,v) \cap B(x_0,\e))}=0$. Hence, $\g\mu$ cancels somewhere inside $\Omega$. We then apply the previous case and conclude that $\g\mu=\g 0$ in $\ol\Omega$.
\end{proof}

Using this result, we then provide a characterization of the linear independence of the solutions of $\nabla\g\mu + \g B\cdot\g\mu =\g 0$ using this result.

\begin{corollary}\label{lem:gram_positive} Let $\g B\in L^p\big(\Omega,\R^{n\times d\times n}\big)$ with $p>d\geq 2$. Let $\g\mu_1,\dots,\g\mu_k \in N(\nabla + \g B\cdot)$. The following assertions are equivalent:
\begin{itemize}
	\item[1.] The family $(\g\mu_1,\dots,\g\mu_k)$ is linearly independent.
	\item[2.] At every point $x\in \overline{\Omega}$, the family $(\g\mu_1(x),\dots,\g\mu_k(x))$ of $\R^n$ is linearly independent. 
	\item[3.] The Gram matrix $G_{ij}(x) := \g\mu_i(x) \cdot\g \mu_j(x)$ is uniformly positive definite, \emph{i.e.}
\begin{align*}
	\exists \alpha > 0,\quad\forall x\in \overline{\Omega},\quad \forall \bm{v} \in \mathbb{R}^k, \quad \bm{v}^T G(x) \bm{v} \ge \alpha |\bm{v}|^2.
\end{align*}
\end{itemize}
\end{corollary}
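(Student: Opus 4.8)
The plan is to close the cycle of implications $1 \Rightarrow 2 \Rightarrow 3 \Rightarrow 1$. Only the first step carries genuine PDE content, since it relies on the nonvanishing property of Proposition~\ref{prop:nonzero}; the other two are elementary Gram-matrix facts combined with the continuity and compactness furnished by Proposition~\ref{prop:prop}. For $1 \Rightarrow 2$ I would argue by contraposition. Suppose assertion 2 fails, so there is a point $x_0 \in \ol\Omega$ and scalars $c_1,\dots,c_k \in \R$, not all zero, with $\sum_{i=1}^k c_i\,\g\mu_i(x_0) = \g 0$. Since $N(\nabla + \g B\cdot)$ is a linear subspace of $L^2(\Omega,\R^n)$, the combination $\g\mu := \sum_{i=1}^k c_i\,\g\mu_i$ again lies in $N(\nabla + \g B\cdot)$ and vanishes at $x_0$. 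Proposition~\ref{prop:nonzero} then forces $\g\mu = \g 0$ on all of $\ol\Omega$, that is $\sum_i c_i\,\g\mu_i = \g 0$ as functions with the $c_i$ not all zero, contradicting the linear independence of $(\g\mu_1,\dots,\g\mu_k)$. This is the heart of the statement: a pointwise degeneracy of the family at a single point propagates, via the unique-continuation-type estimate of Proposition~\ref{prop:nonzero}, to a true linear dependence of the solution functions.

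For $2 \Rightarrow 3$, I would first observe that for each fixed $x$ the matrix $G(x)$ is exactly the Gram matrix of the vectors $\g\mu_1(x),\dots,\g\mu_k(x) \in \R^n$, hence symmetric positive semidefinite, and positive definite precisely when these vectors are linearly independent. Assertion 2 therefore gives $G(x) \succ 0$ for every $x \in \ol\Omega$. To upgrade this to a uniform bound I would use Proposition~\ref{prop:prop}: each $\g\mu_i$ is continuous on $\ol\Omega$, so the entries $G_{ij} = \g\mu_i\cdot\g\mu_j$ are continuous, and the map $x \mapsto \lambda_{\min}(G(x)) = \min_{|\bm v|=1}\bm v^T G(x)\bm v$ is continuous on the compact set $\ol\Omega$. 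Being strictly positive at every point, it attains a minimum $\alpha > 0$, which is precisely the uniform estimate of assertion 3.

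Finally $3 \Rightarrow 1$ is immediate: if $\sum_i c_i\,\g\mu_i = \g 0$ as functions, then fixing any $x$ and setting $\bm c = (c_1,\dots,c_k)^T$ gives $\bm c^T G(x)\bm c = |\sum_i c_i\,\g\mu_i(x)|^2 = 0$, and the uniform positive definiteness yields $\alpha|\bm c|^2 \le \bm c^T G(x)\bm c = 0$, whence $\bm c = \g 0$. The only real obstacle in the whole argument is thus packaged into Proposition~\ref{prop:nonzero}: once the nonvanishing of nontrivial kernel elements is available, the equivalence reduces entirely to the standard characterization of definiteness of Gram matrices and to a compactness argument on $\ol\Omega$.
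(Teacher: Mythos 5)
Your proof is correct and uses exactly the same ingredients as the paper's: Proposition \ref{prop:nonzero} to propagate a single pointwise degeneracy to a global linear dependence, the continuity of the $\g\mu_i$ from Proposition \ref{prop:prop}, and compactness of $\ol\Omega$ for the uniform bound. The only difference is organizational — you run the cycle $1\Rightarrow 2\Rightarrow 3\Rightarrow 1$ while the paper runs the reverse cycle (not $1$) $\Rightarrow$ (not $2$) $\Rightarrow$ (not $3$) $\Rightarrow$ (not $1$), so the appeal to Proposition \ref{prop:nonzero} and the compactness argument (phrased by you as the minimum of $\lambda_{\min}(G(\cdot))$ on $\ol\Omega$ rather than as a minimizing sequence) land in different implications, but the content is the same.
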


\begin{proof} We prove this equivalence by the scheme: (not 1) $\imp$ (not 2) $\imp$ (not 3) $\imp$ (not 1). 

 (not 1) $\imp$ (not 2): If 1 is false, then there exists then $(\alpha_1,\dots,\alpha_k)\in\R^k\bs\{\g 0\}$ such that 
 \begin{equation}\nonumber
\sum_{i=1}^k \alpha_i\g\mu_i = \g 0. 
 \end{equation}
Let  $x \in \ol\Omega$ we have $\sum_{i=1}^k \alpha_i \g\mu_i(x) = \g 0$ so  $(\g\mu_1(x),\dots,\g\mu_k(x))$ is not independent and then 2 is false. 

 (not 2) $\imp$ (not 3): If $(\g\mu_1(x),\dots,\g\mu_k(x))$ is linearly dependent for some $x$, then $G(x)$ is not invertible and 3 is false. 
 
  (not 3) $\imp$ (not 1): If 3 is false, we can construct two sequences $(x_n)$ of $\ol\Omega$ and $(\g v_n)$ of $\R^k$ such that $|\g v_n|=1$ such that $\g v_n^TG(x_n)\g v_ n$ converges to zero. By compactness these sequences converges, up to an extraction, respectively to $x^*\in\ol\Omega$ and $\g v^*$ with $|\g v^*|=1$. By continuity we have $(\bm{v}^*)^T G(x^*) \bm{v}^* = 0$ which means that $G(x^*)$ is not injective and therefore  $(\g\mu_1(x^*),...,\g\mu_k(x^*))$ is linearly dependent. 
  
One may  find $(a_1,...,a_k)\in\R^k\bs\{\g 0\}$ such that $\g\mu := \sum_{j=1}^k a_j \g\mu_j$ satisfies $ \g\mu(x^*) = \bm{0}$. This fonction belongs to $N(\nabla + \g B\cdot)$ so thanks to Proposition \ref{prop:nonzero}, we get that $\g\mu=\g 0$ in $\ol\Omega$ and so the statement 1 is false. 
\end{proof}

We can now deduce a general upper bound of the null space dimension of the operator $(\nabla + \g B\cdot)$.

%
\begin{corollary}\label{prop:upperbound-n}  For any $\g B\in L^p(\Omega,\R^{n\times d\times n})$, we have $\dim N(\nabla + \g B\cdot)\leq n$.
\end{corollary}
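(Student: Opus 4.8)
The plan is to derive the bound directly from the pointwise linear-independence property established in Corollary \ref{lem:gram_positive}, so that the finite-dimensionality bound for $N(\nabla + \g B\cdot)$ becomes a consequence of the trivial fact that $\R^n$ contains at most $n$ linearly independent vectors. First I would take any linearly independent family $\g\mu_1,\dots,\g\mu_k \in N(\nabla + \g B\cdot)$ inside the null space, and the goal is to show $k\leq n$.

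By the equivalence (1)$\Leftrightarrow$(2) in Corollary \ref{lem:gram_positive}, the global linear independence of the functions $(\g\mu_1,\dots,\g\mu_k)$ forces, at \emph{every} point $x\in\ol\Omega$ (in particular at any one fixed $x_0$), the linear independence of the vectors $(\g\mu_1(x_0),\dots,\g\mu_k(x_0))$ in $\R^n$. Since a family of linearly independent vectors of $\R^n$ has at most $n$ elements, we obtain $k\leq n$. As this holds for every linearly independent family contained in the null space, I would conclude that $\dim N(\nabla + \g B\cdot)\leq n$.

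There is essentially no hard obstacle in this final step: all the substantive work has already been done in Proposition \ref{prop:nonzero}, which guarantees that a nontrivial element of the null space cannot vanish anywhere in $\ol\Omega$, and in Corollary \ref{lem:gram_positive}, which upgrades this to the statement that functional linear independence is equivalent to pointwise linear independence of the evaluated vectors. The present statement is a clean corollary of that machinery, and the only point worth recording explicitly is that $p>d\geq 2$ is the standing assumption needed to invoke Proposition \ref{prop:prop} (continuity of $\g\mu$) and hence Corollary \ref{lem:gram_positive}.
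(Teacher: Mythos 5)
Your proof is correct and follows essentially the same route as the paper: the paper takes $n+1$ elements of the null space, finds a nontrivial combination vanishing at a point $x_0$, and invokes Proposition \ref{prop:nonzero} to conclude global dependence, which is exactly the contrapositive of your argument. Routing the step through the equivalence (1)$\Leftrightarrow$(2) of Corollary \ref{lem:gram_positive} rather than directly through Proposition \ref{prop:nonzero} is only a cosmetic repackaging, since that equivalence is itself proved from Proposition \ref{prop:nonzero}.
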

\begin{proof} Consider a family $(\g \mu_1,\dots,\g\mu_{n+1})$ of $n+1$ elements of $N(\nabla + \g B\cdot)$. Thanks to Proposition \ref{prop:prop}, $\g \mu_1,\dots,\g\mu_{n+1}$ are continuous in $\ol\Omega$. Let $x_0\in \Omega$, the family of vectors $(\g \mu_1(x_0),\dots,\g\mu_{n+1}(x_0))$ is linearly dependent in $\R^n$. Hence, there exists a non zero vector of coefficients $(\alpha_1,\dots,\alpha_{n+1})$ such that
\begin{equation}\nonumber
\sum_{k=1}^{n+1}\alpha_{k}\g\mu_{k}(x_0)=0.
\end{equation}
We then denote $\g\mu := \sum_{k=1}^{n+1}\alpha_{k}\g\mu_{k}$, which belongs to $N(\nabla + \g B\cdot)$ by linearity. It satisfies $\g\mu(x_0)=\g 0$, and thanks to Proposition \ref{prop:nonzero}, $\g\mu=\g 0$ everywhere in $\ol\Omega$. This means that the family $(\g \mu_1,\dots,\g\mu_{n+1})$ is linearly dependent. This proves \emph{de facto} that $\dim N(\nabla + \g B\cdot)\leq n$.
\end{proof}

\subsection{Null space dimension in the case $n=1$}

In the case $n=1$, the tensor field $\g B$ reduces to a simple vector field denoted $\g b\in L^p(\Omega,\R^d)$ and we know from Proposition \ref{prop:upperbound-n} that the null space of $(\nabla + \g B\cdot)$ is of dimension zero or one. The details of this case have already been studied in \cite{ammari2021direct}. We will recall here the corresponding result:

\begin{proposition} \label{prop:scalarhomo} Let $\g b\in L^p(\Omega,\R^d)$, with $p>d$. The problem 
\begin{align}\label{eq:scalarhomo}
	\textrm{Find } \quad \mu\in L^2(\Omega),\quad \textrm{s.t. } \quad \nabla \mu+\mu\, \g b = \g 0,
\end{align}
admits a non-trivial solution if and only if there exists $\nu\in W^{1,p}(\Omega)$ such that $\g b=\nabla \nu$. In this case, the non trivial solutions are $\mu=\alpha\, \mathrm{exp}(-\nu)$, $\alpha\in\R\bs \{0\}$. 
\end{proposition}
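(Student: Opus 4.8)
The plan is to prove the two implications separately and then invoke the dimension bound to describe the full solution set. The backbone is that, specializing Proposition \ref{prop:prop} and Proposition \ref{prop:nonzero} to $n=1$, any $\mu \in L^2(\Omega)$ solving $\nabla\mu + \mu\,\g b = \g 0$ automatically lies in $W^{1,p}(\Omega)\hookrightarrow\cC^0(\ol\Omega)$ and, if non-trivial, never vanishes on $\ol\Omega$.

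For the necessity direction I would start from a non-trivial solution $\mu$. Since it is continuous, nowhere zero, and $\Omega$ is connected, $\mu$ keeps a constant sign, so up to a global sign I may assume $\mu>0$; continuity on the compact set $\ol\Omega$ then gives $0 < \inf \mu \le \sup \mu < \infty$, hence $1/\mu \in L^\infty(\Omega)$. I would then set $\nu := -\log\mu$ and apply the chain rule for Sobolev functions composed with a $\cC^1$ map that is Lipschitz on the compact range of $\mu$ to obtain $\nu \in W^{1,p}(\Omega)$ with $\nabla\nu = -\nabla\mu/\mu$. Substituting $\nabla\mu = -\mu\,\g b$ gives $\g b = \nabla\nu$, the desired representation.

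For the sufficiency direction I would assume $\g b = \nabla\nu$ with $\nu\in W^{1,p}(\Omega)$ and exhibit the solution directly. Because $p>d$, the Sobolev embedding gives $\nu\in\cC^0(\ol\Omega)$, hence $\nu$ is bounded, so $\mu := \exp(-\nu)$ is continuous, bounded, and bounded below away from zero; in particular $\mu\in L^2(\Omega)$. Applying the chain rule again, now with the exponential (smooth and Lipschitz on the bounded range of $-\nu$), yields $\mu\in W^{1,p}(\Omega)$ and $\nabla\mu = -\mu\,\nabla\nu = -\mu\,\g b$, i.e. $\nabla\mu + \mu\,\g b = \g 0$, a non-trivial solution. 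To finish the characterization, once this single solution is known I would invoke Corollary \ref{prop:upperbound-n} with $n=1$, which bounds $\dim N(\nabla + \g b\cdot)\le 1$; the null space is therefore exactly the line $\R\,\exp(-\nu)$, and the non-trivial solutions are precisely $\mu = \alpha\exp(-\nu)$ with $\alpha\in\R\bs\{0\}$.

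The only genuinely delicate step is the Sobolev chain rule used in both directions: I must ensure $\mu$ stays uniformly away from zero so that $\log$ is Lipschitz on its range (for necessity), and that $\nu$ is bounded so that $\exp$ is Lipschitz on its range (for sufficiency), and then check that the product of an $L^\infty$ factor with an $L^p$ gradient remains in $L^p$. This is exactly where the non-vanishing conclusion of Proposition \ref{prop:nonzero} together with the $\cC^0(\ol\Omega)$ regularity from Proposition \ref{prop:prop} carry the argument; everything else is routine verification.
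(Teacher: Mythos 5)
Your proof is correct, and it is worth noting that the paper itself does not prove this proposition at all: it defers to the reference \cite{ammari2021direct} and then only sketches, for continuous $\g b$, the method-of-characteristics picture (path-independent circulations, the ODE \eqref{eq:systemODEs}) that it later generalizes to $n>1$ via the operators $R^{\g\gamma}_{\g B}$. Your route is genuinely different and self-contained within the paper's own toolbox: you use Proposition \ref{prop:prop} to upgrade $\mu$ to $W^{1,p}\cap\cC^0(\ol\Omega)$, Proposition \ref{prop:nonzero} to get a uniform sign and a positive lower bound on $|\mu|$, and then the Sobolev chain rule with the locally Lipschitz functions $\log$ and $\exp$ to pass back and forth between $\mu$ and $\nu=-\log\mu$; the final characterization follows from the dimension bound of Corollary \ref{prop:upperbound-n} with $n=1$. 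The delicate points you flag are exactly the right ones, and they are all covered: $1/\mu\in L^\infty$ makes $\nabla\mu/\mu\in L^p$, boundedness of $\nu$ (from $W^{1,p}\hookrightarrow\cC^0$, $p>d$) makes $\exp(-\nu)$ bounded above and below, and connectedness gives the constant sign. What your argument buys is an elementary, fully rigorous proof at the $L^p$ level without any smoothing or path integrals; what the paper's characteristics viewpoint buys is the mechanism that extends to third-order tensor fields and the notion of $k$-conservativity, which the scalar logarithm trick does not. The only cosmetic caveat is that after reducing to $\mu>0$ you should state explicitly that the general non-trivial solution is $\pm$ that normalized one, which is precisely the $\alpha\in\R\setminus\{0\}$ in the statement once the one-dimensionality of the null space is invoked.
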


Hence, the null space is of dimension one if and only if $\g b$ is a gradient, \emph{i.e.}, is a conservative vector field. Let us formally explain why this property of conservativity allows for a non trivial solution to the problem \eqref{eq:scalarhomo}. 

If we suppose that $\g b\in\cC^0\big(\ol\Omega,\R^d\big)$, the property of conservativity is equivalent to the fact that the circulation of $\g b$ along a smooth path in $\Omega$ does not depend on the path itself but only on the starting and the ending points: for any $\g\gamma\in\cC^1([0,1],\Omega)$, the circulation 
\begin{equation}\nonumber
	\int_0^1 \g b (\bm{\gamma})\cdot \bm{\gamma}'
\end{equation}
depends only on $\bm{\gamma}(0)$ and $\bm{\gamma}(1)$. Using the characteristics method, we can construct a non-trivial solution to the equation \eqref{eq:scalarhomo}. Indeed, fix $x_0\in\Omega$ and an arbitrary value $\mu(x_0)\neq 0$. We build the solution $\mu$ in the whole domain $\Omega$ as follows. Let $x\in\Omega$ and $\bm{\gamma} \in \cC^1([0,1],\Omega)$ such that $\bm{\gamma}(0)=x_0$ and $\bm{\gamma}(1)=x$, we define $\varphi$ as the unique solution of the Cauchy problem 
\begin{align}\label{eq:systemODEs}\left\{\begin{aligned}
\varphi' &= - \left( (\g b\circ\g\gamma) \cdot \bm{\gamma}'\right)\varphi \quad \mathrm{in} \; [0,1], \\
\varphi(0) &= \mu(x_0),
\end{aligned}\right.\end{align}
and define $\mu(x):=\varphi(1)$. This is well defined as $\varphi$ has an explicit expression and so
\begin{align*}
	\mu(x)=\mu(x_0)\exp\left(-\int_0^1 (\g b\circ\g\gamma)\cdot \bm{\gamma}'\right),
\end{align*}
where the above integral depends only on $x$ (and $x_0$ which is fixed). We then check that $\mu$, as defined above, is indeed a solution of \eqref{eq:scalarhomo}. Let $v \in\R^d\bs\{0\}$ and choose $\bm{\gamma}$ such that $\bm{\gamma}'(1)= v$. The solution $\mu$ satisfies $\varphi(t)=\mu(\bm{\gamma}(t))$ for all $t\in[0,1]$ and then, from the definition of $\varphi$,
\begin{align*}
	\nabla\mu(x)\cdot v = \varphi'(1) =  - \mu(x)\g b(x)\cdot v. 
\end{align*}
Hence $\nabla\mu(x)\cdot v + \mu(x) \g b(x)\cdot v= \bm{0}$. As this is true for all $v\in\R^d$, $\nabla\mu + \mu\g b= \bm{0}$ is satisfied. 

To sum up, when the vector field $\g b$ is continuous and conservative, the method of characteristics yields a well-defined, non-trivial solution. This approach is generalized in the next subsection to the case $n>1$. To do so, we need to introduce the notion of \emph{conservative third-order tensor fields}.

\subsection{Continuous $k$-conservative third-order tensor fields}

We consider in this subsection a continuous third-order tensor field $\g B\in \cC^0\big(\ol\Omega,\R^{n\times d\times n}\big)$.  For any $x,y\in\Omega$, we define the set $\Gamma(x,y)$ of all smooth curves from $x$ to $y$ in $\Omega$ as 
\begin{equation}\nonumber
\Gamma(x,y):=\set{\g\gamma\in\cC^1([0,1],\Omega)}{\g\gamma(0)=x,\ \g\gamma(1)=y}. 
\end{equation}
Some basic operations on these curves will be useful:
\begin{itemize}
	\item Reversion: for $\g\gamma\in\Gamma(x,y)$, we denote $\ol{\g\gamma}\in \Gamma(y,x)$ defined by $\ol{\g\gamma}(t) = {\g\gamma}(1-t)$ for all $t\in [0,1]$,
	\item Concatenation: for $\g\gamma_1\in\Gamma(x,y)$ and $\g\gamma_2\in\Gamma(y,z)$, we define $[\g\gamma_1 \, \g\gamma_2](t):=\gamma_1(2t)\chi_{[0,1/2]}(t)+\gamma_2(2t-1)\chi_{(1/2,1]}(t)$ which belongs to $\Gamma(x,z)$ if the connexion at point $y$ is of class $\cC^1$. 
\end{itemize}  
Consider now a solution $\bm{\mu}$ of the equation $\nabla \bm{\mu} + \g B \cdot \bm{\mu}=\bm{0}$. For any smooth curve $\bm{\gamma}\in \Gamma(x,y)$, the flow $\bm{\varphi}:= \bm{\mu} \circ \bm{\gamma}$ of $\bm{\mu}$ on the curve $\bm{\gamma}$ satisfies the following system of ODEs:
\begin{align}\label{eq:ODE}
\bm{\varphi}'=(\nabla \bm{\mu} \circ \bm{\gamma})\cdot \bm{\gamma}' = - \left(( \g B\circ \bm{\gamma})\cdot(\bm{\mu} \circ \bm{\gamma})\right)\cdot \bm{\gamma}' = -  \left((\g B\circ \bm{\gamma})\cdot\bm{\varphi}\right)\cdot \bm{\gamma}'. 
\end{align}
This function also satisfies the boundary values $\bm{\varphi}(0)=\bm{\mu}(x)$ and $\bm{\varphi}(1)=\bm{\mu}(y)$. Hence, we see that the final value of the solution of \eqref{eq:ODE} should not depend on the path $\bm{\gamma}$ but only on $\bm{\gamma}(0)=x$ and $\bm{\gamma}(1)=y$. This will allow us to characterize the solutions of $\nabla \bm{\mu} +\bm{B} \cdot \bm{\mu} = \bm{0}$. For this purpose, we introduce the following linear operator:  

 \begin{definition} For any $(x,y)\in\Omega^2$ and any $\g\gamma\in \Gamma(x,y)$, we define the linear operator $R^{\g\gamma}_{\g B}:\R^n\to \R^n$ by
 \begin{equation}\nonumber
R^{\bm{\gamma}}_{\g B} (\bm{v}):=\bm{\varphi}_{\bm{v}}(1), \quad \forall \bm{v} \in \mathbb{R}^n,
 \end{equation} 
where $\bm{\varphi}_{\bm{v}}$ is the unique solution of the Cauchy problem 
 \begin{equation}\label{eq:phi}\left\{\begin{aligned}
 \bm{\varphi}' &= -  \left((\g B\circ\g\gamma)\cdot\bm{\varphi}\right)\cdot \g\gamma'\quad \ton [0,1],\\
 \bm{\varphi}(0) &= \bm{v}.
 \end{aligned}\right.\end{equation}
 \end{definition}

It satisfies some straightforward properties:
 \begin{proposition}\label{prop:resolvant_properties} Let $\g\gamma\in \Gamma(x,y)$ and $\g\delta \in\Gamma(y,z)$ such that $[\g\gamma \, \g\delta ]\in \Gamma(x,z)$, then 
 \begin{enumerate}
 \item $R_{\g B}^{\g\gamma}:\R^n\to\R^n$ is a bijection and $R_{\g B}^{\ol{\g\gamma}} = (R_{\g B}^{\g\gamma})^{-1}$,
 \item $R_{\g B}^{[\g\gamma \, \g\delta]} = R_{\g B}^{\g\delta} \circ R_{\g B}^{\g\gamma}$.
 \end{enumerate} 
 \end{proposition}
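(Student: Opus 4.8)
The plan is to recognize that the Cauchy problem \eqref{eq:phi} defining $R^{\g\gamma}_{\g B}$ is nothing but a \emph{linear} homogeneous ODE with continuous coefficients, after which both statements reduce to classical facts about fundamental (state-transition) matrices. Writing the contractions componentwise, \eqref{eq:phi} reads $\g\varphi'(t)=A_{\g\gamma}(t)\,\g\varphi(t)$ with the matrix $A_{\g\gamma}(t)\in\R^{n\times n}$ given by $(A_{\g\gamma})_{ik}(t):=-\sum_j B_{ijk}(\g\gamma(t))\,\gamma_j'(t)$. Since $\g B\in\cC^0(\ol\Omega)$ and $\g\gamma\in\cC^1([0,1])$, the map $t\mapsto A_{\g\gamma}(t)$ is continuous on $[0,1]$, so by the Cauchy--Lipschitz theorem for linear systems the solution $\g\varphi_{\g v}$ exists, is unique and global, and depends linearly on $\g v$. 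Hence $R^{\g\gamma}_{\g B}$ is a well-defined linear operator equal to $\Phi_{\g\gamma}(1)$, where $\Phi_{\g\gamma}$ is the fundamental matrix solving $\Phi_{\g\gamma}'=A_{\g\gamma}\Phi_{\g\gamma}$, $\Phi_{\g\gamma}(0)=I$. Bijectivity is then immediate from Liouville's formula $\det\Phi_{\g\gamma}(t)=\exp(\int_0^t\tr A_{\g\gamma})\neq 0$.

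For the reversion identity I would first compute the coefficient matrix of the reversed curve. Since $\ol{\g\gamma}(t)=\g\gamma(1-t)$ gives $\ol{\g\gamma}'(t)=-\g\gamma'(1-t)$, the two sign changes combine to yield $A_{\ol{\g\gamma}}(t)=-A_{\g\gamma}(1-t)$. I then propose the fundamental matrix of the reversed problem in the form $\Psi(t):=\Phi_{\g\gamma}(1-t)\,\Phi_{\g\gamma}(1)^{-1}$, check that $\Psi(0)=I$ and that $\Psi'(t)=-A_{\g\gamma}(1-t)\Psi(t)=A_{\ol{\g\gamma}}(t)\Psi(t)$, and conclude by uniqueness that $\Phi_{\ol{\g\gamma}}=\Psi$. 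Evaluating at $t=1$ gives $R^{\ol{\g\gamma}}_{\g B}=\Phi_{\ol{\g\gamma}}(1)=\Phi_{\g\gamma}(1)^{-1}=(R^{\g\gamma}_{\g B})^{-1}$, which also re-proves bijectivity by exhibiting a two-sided inverse.

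For the concatenation identity the key auxiliary observation is reparametrization invariance: if $\sigma$ is an increasing $\cC^1$ map between intervals, then $\g\varphi\circ\sigma$ solves the transported equation, since a one-line chain-rule check gives $A_{\g\gamma\circ\sigma}=\sigma'\,(A_{\g\gamma}\circ\sigma)$. Applying this to each half of $[\g\gamma\,\g\delta]$, let $\g\varphi$ solve \eqref{eq:phi} along $[\g\gamma\,\g\delta]$ with $\g\varphi(0)=\g v$: the rescaled function $\tau\mapsto\g\varphi(\tau/2)$ solves the equation along $\g\gamma$ (the factor of $2$ from $[\g\gamma\,\g\delta]'=2\g\gamma'$ on $[0,1/2]$ cancels the $1/2$ from the time rescaling), whence $\g\varphi(1/2)=R^{\g\gamma}_{\g B}(\g v)$; likewise $\tau\mapsto\g\varphi((\tau+1)/2)$ solves the equation along $\g\delta$ with initial value $\g\varphi(1/2)$, whence $\g\varphi(1)=R^{\g\delta}_{\g B}(\g\varphi(1/2))$. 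Composing gives $R^{[\g\gamma\,\g\delta]}_{\g B}=R^{\g\delta}_{\g B}\circ R^{\g\gamma}_{\g B}$; the hypothesis $[\g\gamma\,\g\delta]\in\Gamma(x,z)$ ensures the junction is genuinely $\cC^1$, so $A_{[\g\gamma\,\g\delta]}$ is continuous and no gluing subtlety arises. I do not expect a deep obstacle here, as everything rests on existence, uniqueness, linearity and Liouville's formula; the only genuine care points are the double sign change producing $A_{\ol{\g\gamma}}(t)=-A_{\g\gamma}(1-t)$ and the cancellation of the two factors of $2$ (curve speed versus time rescaling) in the concatenation.
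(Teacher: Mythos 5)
Your proposal is correct and follows essentially the same route as the paper, which simply invokes ``simple changes of variable in the ODE and the Cauchy--Lipschitz uniqueness theorem''; you have merely written out those changes of variable (the reversed fundamental matrix $\Phi_{\g\gamma}(1-t)\Phi_{\g\gamma}(1)^{-1}$ and the two half-interval rescalings) explicitly. The Liouville-formula argument for bijectivity is a harmless extra, since, as you note, the reversion identity already exhibits a two-sided inverse.
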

 
 \begin{proof} These results come from simple changes of variable in the ODE \eqref{eq:phi} and the application of the Cauchy-Lipschitz uniqueness theorem. 
 \end{proof}

 This operator $R^{\g\gamma}_{\g B}$ allows to give a rather simple criterium for a vector field $\g\mu$ to belong to $N(\nabla + \g B\cdot)$.
  
 \begin{proposition}\label{prop:resolvant_kernel} A vector field $\g\mu$ belongs to $N(\nabla + \g B\cdot)$ if and only if $\g\mu\in \cC^1\big(\ol\Omega,\R^{n}\big)$ and 
 \begin{equation}\nonumber
 \forall x,y\in\Omega,\quad \forall \gamma\in\Gamma(x,y),\quad R_{\g B}^{\g \gamma}(\g\mu(x)) = \g \mu(y).
 \end{equation}
 \end{proposition}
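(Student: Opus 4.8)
The plan is to prove the two implications of the equivalence separately, the forward one being essentially a direct reading of the definition of $R^{\g\gamma}_{\g B}$ together with ODE uniqueness, and the backward one requiring a differentiation argument along well-chosen curves.

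For the direct implication, suppose $\g\mu\in N(\nabla+\g B\cdot)$. Since $\g B\in\cC^0(\ol\Omega,\R^{n\times d\times n})$, Proposition \ref{prop:prop} gives $\g\mu\in\cC^1(\ol\Omega,\R^n)$, so the first required condition holds. I would then fix $x,y\in\Omega$ and $\g\gamma\in\Gamma(x,y)$ and set $\g\varphi:=\g\mu\circ\g\gamma$. The chain-rule computation \eqref{eq:ODE} shows that $\g\varphi$ solves exactly the Cauchy problem \eqref{eq:phi} with initial datum $\g\varphi(0)=\g\mu(x)$. By Cauchy--Lipschitz uniqueness, $\g\varphi$ coincides with the unique solution $\g\varphi_{\g\mu(x)}$, whence $R^{\g\gamma}_{\g B}(\g\mu(x))=\g\varphi(1)=\g\mu(\g\gamma(1))=\g\mu(y)$, as desired.

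For the converse, assume $\g\mu\in\cC^1(\ol\Omega,\R^n)$ satisfies the transport identity for every curve. It suffices to check that $(\nabla\g\mu(x))\cdot v+(\g B(x)\cdot\g\mu(x))\cdot v=\g 0$ at an arbitrary $x\in\Omega$ and in an arbitrary direction $v\in\R^d$, since the equation $\nabla\g\mu+\g B\cdot\g\mu=\g 0$ is then satisfied classically (everything being continuous). I would fix such $x$ and $v$, use that $\Omega$ is open to choose a short segment $\g\gamma(t):=x+(t-1)\eta v$ lying in $\Omega$ for $t\in[0,1]$ with $\eta>0$ small, and set $y:=\g\gamma(0)$, so that $\g\gamma\in\Gamma(y,x)$, $\g\gamma(1)=x$, and $\g\gamma'\equiv\eta v$. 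Let $\g\varphi:=\g\varphi_{\g\mu(y)}$ be the solution of \eqref{eq:phi} along $\g\gamma$. The crux is to prove the pointwise identity $\g\varphi(s)=\g\mu(\g\gamma(s))$ for every $s\in[0,1]$: reparametrizing the restriction $\g\gamma|_{[0,s]}$ onto $[0,1]$ and applying the hypothesis to this sub-curve gives $\g\mu(\g\gamma(s))=R^{\g\gamma_s}_{\g B}(\g\mu(y))$, while a change of variable in \eqref{eq:phi} (reparametrization invariance of the resolvent, in the spirit of Proposition \ref{prop:resolvant_properties}) identifies $R^{\g\gamma_s}_{\g B}(\g\mu(y))$ with $\g\varphi(s)$. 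Differentiating this identity at $s=1$ yields $\g\varphi'(1)=(\nabla\g\mu(x))\cdot\g\gamma'(1)$, whereas \eqref{eq:phi} gives $\g\varphi'(1)=-(\g B(x)\cdot\g\mu(x))\cdot\g\gamma'(1)$; since $\g\gamma'(1)=\eta v$ with $\eta\neq0$, the desired identity follows, and $x,v$ being arbitrary completes the argument.

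The main obstacle is precisely the identity $\g\varphi=\g\mu\circ\g\gamma$ in the converse direction: one cannot simply substitute $\g\mu\circ\g\gamma$ into \eqref{eq:ODE}, because that substitution presupposes what we are trying to prove, namely that $\g\mu$ solves the PDE. The reparametrization-invariance argument is what circumvents this and propagates the endpoint hypothesis to every intermediate point. Everything else---the regularity input, ODE uniqueness, and the final differentiation---is routine; the only bookkeeping point is to keep the test curves inside the open set $\Omega$, which is ensured by taking sufficiently short segments. As an alternative to the reparametrization step, I could instead expand the flow $\g\varphi_h(1)$ associated with the segment from $x$ to $x+hv$ to first order in $h$, using continuous dependence of the ODE on $h$, and compare it with the $\cC^1$ expansion of $\g\mu(x+hv)$; this delivers the same conclusion without invoking the full intermediate identity.
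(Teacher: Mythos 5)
Your proposal is correct and follows essentially the same route as the paper: the forward direction identifies $\g\mu\circ\g\gamma$ with the unique solution of the Cauchy problem \eqref{eq:phi}, and the converse establishes the intermediate identity $\g\varphi(s)=\g\mu(\g\gamma(s))$ via the reparametrized sub-curves $\g\gamma_s$ before differentiating at the endpoint. The only cosmetic difference is that you use a short straight segment with $\g\gamma'\equiv\eta v$ where the paper takes an arbitrary curve with prescribed terminal tangent; both work, and your explicit flagging of why one cannot simply substitute $\g\mu\circ\g\gamma$ into \eqref{eq:ODE} in the converse direction is a point the paper passes over more quickly.
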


 \begin{proof} For the direct implication, if $\g\mu\in N(\nabla + \g B\cdot)$, then $\bm{\mu} \in \cC^1\big(\ol\Omega,\R^{n}\big)$ from Proposition \ref{prop:prop}. We then take $\gamma\in\Gamma(x,y)$ and we define $\bm{\varphi}:=\g\mu\circ\g\gamma$, which is a solution of the ODE system \eqref{eq:phi} with the initial condition $\bm{\varphi}(0)=\g\mu(x)$. Then $\bm{\varphi}(1)=\g\mu(y)$ and, by definition of $R_{\g B}^{\g \gamma}$, we have $R_{\g B}^{\g \gamma}(\g\mu(x))=\g\mu(y)$.

Conversely,  we fix $y\in\Omega$ and  $\bm{v}\in\R^n\bs\{\bm{0}\}$. We then chose $\g\gamma\in\Gamma(x,y)$ such that $\g\gamma'(1)=\bm{v}$ which is always possible. If one calls $\bm{\varphi}$ the unique solution of the characteristic ODE system \eqref{eq:phi} with $\bm{v} = \bm{\mu}(x)$, we see that for all $s\in [0,1]$, $\bm{\varphi}(s)=R_{\g B}^{\g\gamma_s}(\g v)$, where $\bm{\gamma}_s(t):=\bm{\gamma}(st)$ for $t\in [0,1]$. Hence, for all $s\in [0,1]$, $\bm{\varphi}(s)=\g\mu(\g\gamma(s))$. Then, using \eqref{eq:phi}, we write:
 \begin{equation}\nonumber
 \begin{aligned}
  \nabla \g\mu(\g\gamma(1))\cdot\g\gamma' (1) &=  - \left(\g B(\g\gamma(1))\cdot \bm{\varphi}(1)\right)\cdot \g\gamma'(1),\\
    \nabla \g\mu(y)\cdot \bm{v} &=  - (\g B(y)\cdot \g\mu(y))\cdot \bm{v},\\
        \left[\nabla \g\mu(y) + \g B(y)\cdot \g\mu(y)\right]\cdot \bm{v} &= 0.\\
 \end{aligned}
 \end{equation}
 As the last equality holds for any $\bm{v}\in\R^n$ and any $y\in\Omega$, we conclude that $\nabla \g\mu + \g B\cdot \g\mu =\g 0$.
\end{proof}

For a chosen point $x\in\Omega$, we can define the set of vectors $\bm{v}\in\R^n$ such that for all $y\in\Omega$, the image $R^{\g\gamma}_{\g B} (\g v)$ does not depend on the path $\g\gamma\in\Gamma(x,y)$. 

\begin{definition} For any $x\in\Omega$, define
\begin{equation}\nonumber
E_{\g B}^{x}:=\set{\bm{v}\in\R^n}{\forall y\in \Omega,\ \forall \bm{\gamma}_1,\bm{\gamma}_2\in\Gamma(x,y),\ R^{\g\gamma_1}_{\g B}(\g v) = R^{\g\gamma_2}_{\g B}(\g v) }. 
\end{equation}
\noindent In other terms,
\begin{equation}\nonumber
E_{\g B}^{x}=\bigcap_{y\in\Omega}\  \bigcap_{\g\gamma_1,\g\gamma_2 \in \Gamma(x,y)} (R^{\g\gamma_1}_{\g B}-R^{\g\gamma_2}_{\g B})^{-1}(\{0\}). 
\end{equation}
\end{definition}

From this definition, it is clear that $E_{\g B}^{x}$ is a vector subspace of $\R^n$. Its dimension is independent on the point $x$. Thus, $\dim E_{\g B}^x$ is a characteristic number of the tensor field $\g B$. We clarify this statement in the following proposition.

\begin{proposition}\label{prop:dimconst} The integer $\dim E_{\g B}^{x}\in\{0,\dots,n\}$ does not depend on $x\in\Omega$, \emph{i.e.}
\begin{equation}\nonumber
\forall x_1,x_2\in\Omega,\quad \dim E_{\g B}^{x_1} = \dim E_{\g B}^{x_2}.
\end{equation}
\end{proposition}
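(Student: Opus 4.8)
The plan is to show that the map $R_{\g B}^{\g\gamma}$ for a fixed path $\g\gamma\in\Gamma(x_1,x_2)$ sets up a linear isomorphism between $E_{\g B}^{x_1}$ and $E_{\g B}^{x_2}$, which immediately yields the equality of dimensions. First I would fix $x_1,x_2\in\Omega$ and, using connectedness of $\Omega$, choose one fixed curve $\g\gamma_0\in\Gamma(x_1,x_2)$. The candidate isomorphism is the restriction of the bijection $R_{\g B}^{\g\gamma_0}:\R^n\to\R^n$ (a bijection by Proposition \ref{prop:resolvant_properties}) to the subspace $E_{\g B}^{x_1}$. It suffices to prove that $R_{\g B}^{\g\gamma_0}(E_{\g B}^{x_1})\subseteq E_{\g B}^{x_2}$; by the same argument applied with $\g\gamma_0$ replaced by its reverse $\ol{\g\gamma_0}$ (which realizes the inverse by Proposition \ref{prop:resolvant_properties}), one gets the reverse inclusion $R_{\g B}^{\ol{\g\gamma_0}}(E_{\g B}^{x_2})\subseteq E_{\g B}^{x_1}$, and these two inclusions force $R_{\g B}^{\g\gamma_0}$ to restrict to a bijection between the two subspaces.

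The heart of the argument is the inclusion $R_{\g B}^{\g\gamma_0}(E_{\g B}^{x_1})\subseteq E_{\g B}^{x_2}$. Take $\g v\in E_{\g B}^{x_1}$ and set $\g w:=R_{\g B}^{\g\gamma_0}(\g v)$. I must check that for every $y\in\Omega$ and every pair $\g\delta_1,\g\delta_2\in\Gamma(x_2,y)$, one has $R_{\g B}^{\g\delta_1}(\g w)=R_{\g B}^{\g\delta_2}(\g w)$. The idea is to pull these paths back to $x_1$ by pre-concatenating with $\g\gamma_0$: the concatenations $[\g\gamma_0\,\g\delta_1]$ and $[\g\gamma_0\,\g\delta_2]$ both lie in $\Gamma(x_1,y)$, and by the composition rule of Proposition \ref{prop:resolvant_properties} we have
\begin{equation}\nonumber
R_{\g B}^{[\g\gamma_0\,\g\delta_i]}(\g v) = R_{\g B}^{\g\delta_i}\big(R_{\g B}^{\g\gamma_0}(\g v)\big) = R_{\g B}^{\g\delta_i}(\g w),\qquad i=1,2.
\end{equation}
Since $\g v\in E_{\g B}^{x_1}$, the two left-hand sides are equal, hence $R_{\g B}^{\g\delta_1}(\g w)=R_{\g B}^{\g\delta_2}(\g w)$. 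As $y$ and the pair of curves were arbitrary, this shows $\g w\in E_{\g B}^{x_2}$.

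The only genuine technical obstacle is the regularity of concatenation: the definition of $[\g\gamma_1\,\g\gamma_2]$ in the excerpt requires the junction to be of class $\cC^1$, so a naive concatenation of two arbitrary $\cC^1$ curves need not be admissible. I would handle this by reparametrizing each curve so that it has vanishing velocity near its endpoints (a standard smoothing that does not change the endpoints nor, by a change of variables in the defining ODE \eqref{eq:phi}, the operator $R_{\g B}^{\g\gamma}$), which makes every concatenation $\cC^1$ and legitimate. With this normalization the composition identity of Proposition \ref{prop:resolvant_properties} applies verbatim, and the rest of the argument is the purely formal manipulation above. Thus the dimension count follows, completing the proof.
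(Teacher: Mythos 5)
Your proof is correct and follows essentially the same route as the paper's: transport $E_{\g B}^{x_1}$ into $E_{\g B}^{x_2}$ via $R_{\g B}^{\g\gamma_0}$, using concatenation and the composition rule of Proposition \ref{prop:resolvant_properties}, then conclude by bijectivity and symmetry of the roles of $x_1$ and $x_2$. The only (minor) divergence is in handling the $\cC^1$ junction: the paper picks auxiliary paths $\wt{\g\gamma}_1,\wt{\g\gamma}_2\in\Gamma(x_1,x_2)$ so that the concatenations are admissible and then uses the path-independence of $\g v$ to replace them by a single $R_{\g B}^{\g\gamma}$, whereas you reparametrize to make the velocities vanish at the endpoints — both are legitimate.
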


\begin{proof} Let $\g \gamma\in \Gamma(x_1,x_2)$. We prove that $R_{\g B}^{\g\gamma}(E_{\g B}^{x_1})\subset E_{\g B}^{x_2}$. Take $\bm{v}\in E_{\g B}^{x_1}$ and call $\bm{w}:=R_{\g B}^{\g\gamma}(\g v)$. Let $y\in\Omega$ and $\g\gamma_1, \g\gamma_2\in \Gamma(x_2,y)$. We can find $\wt{\g\gamma}_1, \wt{\g\gamma}_2\in \Gamma(x_1,x_2)$ such that $[\wt{\g\gamma}_1 \, {\g\gamma_1}]\in \Gamma(x_1,y)$  and $[\wt{\g\gamma}_2 \, {\g\gamma_2}]\in \Gamma(x_1,y)$, hence by the definition of $E_{\g B}^{x_1}$, we have $R_{\g B}^{[\wt{\g\gamma}_1 \, \g\gamma_1]}(\g v)=R_{\g B}^{[\wt{\g\gamma}_2 \, \g\gamma_2]}(\g v)$. From Proposition \ref{prop:resolvant_properties}, 
\begin{equation}\nonumber\begin{aligned}
R_{\g B}^{[\wt{\g\gamma}_1 \g\gamma_1]}(\g v) =R_{\g B}^{[\wt{\g\gamma}_2 \g\gamma_2]}(\g v) &\quad\eq\quad R_{\g B}^{\g\gamma_1}\circ R_{\g B}^{\wt{\g\gamma}_1}(\g v) =R_{\g B}^{\g\gamma_2}\circ R_{\g B}^{\wt{\g\gamma}_2}(\g v)  \\
& \quad\eq\quad R_{\g B}^{\g\gamma_1}\circ R_{\g B}^{\g\gamma}(\g v) =R_{\g B}^{\g\gamma_2}\circ R_{\g B}^{\g\gamma}(\g v) \\ 
&\quad\eq\quad R_{\g B}^{\g\gamma_1}(\g w) =R_{\g B}^{\g\gamma_2}(\g w).\\
\end{aligned}\end{equation}
See Figure \ref{fig:scheme} for a graphic interpretation of this computation. This last line shows that $\bm{w}\in E^{x_2}_{\g B}$. We then deduce that  $R_{\g B}^{\g\gamma}(E_{\g B}^{x_1})\subset E_{\g B}^{x_2}$ and, as $R_{\g B}^{\g\gamma}$ is bijective, $\dim E_{\g B}^{x_1} \leq  \dim E_{\g B}^{x_2}$. By symmetry of roles of $x_1$ and $x_2$, we also have $\dim E_{\g B}^{x_2} \leq  \dim E_{\g B}^{x_1}$. 
\end{proof}

\begin{figure}
\begin{center}
\begin{tikzpicture}[scale=3]
\node[label=below:$x_1$] (1) at (0,0) {};
\node[label=below:$x_2$] (2) at (1,1){};
\node[label=below:$y$] (3) at (3,1){};
\node[label=left:$\red{\bm{v}}$]  at (0,0) {};
\node[label=above:$\red{R_{\g B}^{{\g\gamma}}(\g v)=R_{\g B}^{\wt{\g\gamma}_1}(\g v)=R_{\g B}^{\wt{\g\gamma}_2}(\g v)}$]  at (0.7,1.3){};
\draw [red] (0.75,1.3) -- (1,1);
\node[label=right:$\red{R_{\g B}^{[\wt{\g\gamma}_1 \g\gamma_1]}(\g v)=R_{\g B}^{[\wt{\g\gamma}_2 \g\gamma_2]}(\g v) }$]  at (2.5,1.3){};
\draw [black] plot [smooth,tension=1] coordinates {(1) (0.5,0.3)  (2)  (1.8,1.3) (3)};
\draw [black] plot [smooth,tension=1] coordinates {(1) (0.1,0.5)  (2) (1.8,0.5) (3)};
\draw [black] plot [smooth,tension=1] coordinates {(1) (0.2,0.4)  (2)};
\draw [fill=black] (1) circle (1pt);
\draw [fill=black] (2) circle (1pt);
\draw [fill=black] (3) circle (1pt);
\node[label=below:$\g\gamma_1$] at (1.7,1.6) {};
\node[label=below:$\g\gamma$] at (0.4,0.6) {};
\node[label=below:$\g\gamma_2$] at (1.7,0.5) {};
\node[label=below:$\tilde{\g\gamma_1}$] at (0.7,0.5) {};
\node[label=below:$\tilde{\g\gamma_2}$] at (0.2,1.1) {};
\end{tikzpicture}
\end{center}
\caption{\label{fig:scheme} Illustration of the proof of \ref{prop:dimconst}. As $\bm{v}\in E_{\g B}^{x_1}$, we get all the identities in red. From that, we deduce $R_{\g B}^{\g\gamma_1}(\g w) =R_{\g B}^{\g\gamma_2}(\g w)$ where $\bm{w}=R_{\g B}^{\g\gamma}(\g v)$. }
\end{figure}
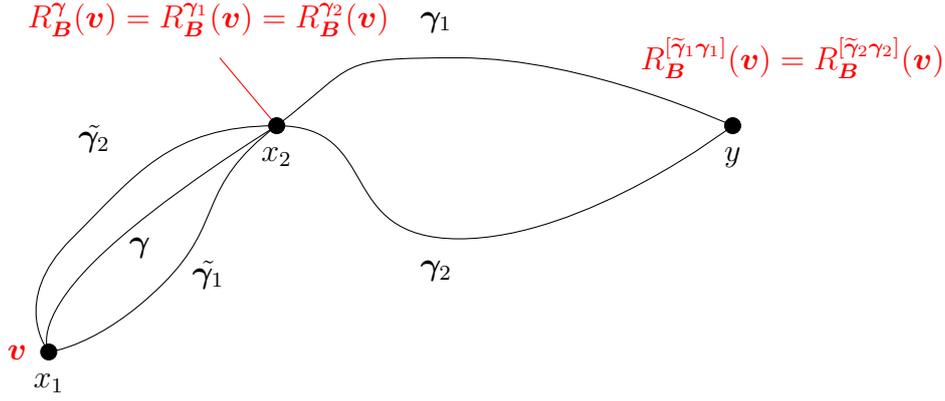

We now propose the following definition. 

\begin{definition} We say that a third-order tensor field $\g B\in\cC^0\big(\ol\Omega,\R^{n\times d\times n}\big)$ is $k$-conservative if 
\begin{equation}\nonumber
\dim E_{\g B}^{x} = k,\quad\forall x\in\Omega.
\end{equation}
We also say that $k$ is the degree of conservativity of the field $\g B$. 
\end{definition}

\begin{remark} From the previous proposition, any tensor field $\g B\in\cC^0\big(\ol\Omega,\R^{n\times d\times n}\big)$ is $k$-conservative for some $k\in\{0,\dots,n\}$. 
\end{remark}

\subsection{Null space dimension for continuous tensor fields}

This subsection focuses on the link between the degree of conservativity and the dimension of $N(\nabla + \g B\cdot)$. In what follows, we prove that they are identical.

\begin{theorem}\label{theo:C0kcons} Let $\g B\in\cC^0(\ol\Omega,\R^{n\times d\times n})$ and $k\in\{0,\dots,n\}$, then
\begin{equation}\nonumber
\dim N(\nabla + \g B\cdot) = k\quad\eq \quad \g B\text{ is $k$-conservative. }
\end{equation}
\end{theorem}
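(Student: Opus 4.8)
The plan is to fix a base point $x_0\in\Omega$ and to show that the evaluation map
\[
\Phi:N(\nabla+\g B\cdot)\to\R^n,\qquad \Phi(\g\mu):=\g\mu(x_0),
\]
which is well defined since elements of $N(\nabla+\g B\cdot)$ are continuous on $\ol\Omega$ by Proposition \ref{prop:prop}, is a linear isomorphism onto the subspace $E_{\g B}^{x_0}$. Since $\dim E_{\g B}^{x}$ is independent of $x$ and equal, by definition, to the degree of conservativity of $\g B$ (Proposition \ref{prop:dimconst}), the identity $\dim N(\nabla+\g B\cdot)=\dim E_{\g B}^{x_0}$ that results from this isomorphism yields both implications of the stated equivalence at once.

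Injectivity of $\Phi$ is essentially free: if $\g\mu\in N(\nabla+\g B\cdot)$ satisfies $\g\mu(x_0)=\g 0$, then Proposition \ref{prop:nonzero} forces $\g\mu=\g 0$ on $\ol\Omega$. The inclusion $\Phi\big(N(\nabla+\g B\cdot)\big)\subseteq E_{\g B}^{x_0}$ follows from the forward direction of Proposition \ref{prop:resolvant_kernel}: for $\g\mu\in N$, any $y\in\Omega$ and any $\g\gamma_1,\g\gamma_2\in\Gamma(x_0,y)$ one has $R_{\g B}^{\g\gamma_1}(\g\mu(x_0))=\g\mu(y)=R_{\g B}^{\g\gamma_2}(\g\mu(x_0))$, so $\g\mu(x_0)\in E_{\g B}^{x_0}$.

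The real content is surjectivity onto $E_{\g B}^{x_0}$, i.e. building, for each $\g v\in E_{\g B}^{x_0}$, an element of $N$ with value $\g v$ at $x_0$. I would set $\g\mu(y):=R_{\g B}^{\g\gamma}(\g v)$ for an arbitrary $\g\gamma\in\Gamma(x_0,y)$; this is well defined because membership of $\g v$ in $E_{\g B}^{x_0}$ makes the value path-independent, and because a smooth connected domain is $\cC^1$-path-connected. Using the concatenation rule $R_{\g B}^{[\g\alpha\,\g\gamma]}=R_{\g B}^{\g\gamma}\circ R_{\g B}^{\g\alpha}$ of Proposition \ref{prop:resolvant_properties} together with path-independence, I would upgrade this to the full relation $R_{\g B}^{\g\gamma}(\g\mu(x))=\g\mu(y)$ for \emph{all} $x,y\in\Omega$ and $\g\gamma\in\Gamma(x,y)$, not merely for paths issued from $x_0$ (take $\g\alpha\in\Gamma(x_0,x)$, so that $\g\mu(y)=R_{\g B}^{[\g\alpha\,\g\gamma]}(\g v)=R_{\g B}^{\g\gamma}(\g\mu(x))$).

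It then remains to verify that this $\g\mu$ has the regularity required to belong to $N(\nabla+\g B\cdot)$, and this is where I expect the main difficulty. First, a Grönwall estimate along paths gives $|\g\mu(y)|\le|\g v|\exp\big(\|\g B\|_{L^\infty}\,\ell(\g\gamma)\big)$; since a bounded smooth domain has finite intrinsic diameter (quasiconvexity), $\g\mu$ is bounded, hence in $L^2(\Omega)$. Continuity of $\g\mu$ follows from continuous dependence of the flow \eqref{eq:phi} on its endpoint, writing $\g\mu(y)=R_{\g B}^{\sigma}(\g\mu(y_0))$ with $\sigma$ the short segment from $y_0$ to $y$. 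Differentiating along a segment in a direction $v$ and using the ODE \eqref{eq:phi}, exactly as in the converse part of Proposition \ref{prop:resolvant_kernel}, shows that every directional derivative exists and equals $-(\g B\cdot\g\mu)\cdot v$; the partial derivatives $\partial_j\mu_i=-\sum_\ell B_{ij\ell}\mu_\ell$ are thus continuous, so $\g\mu\in\cC^1(\Omega)$ and solves $\nabla\g\mu+\g B\cdot\g\mu=\g 0$ classically, hence weakly. Being a bounded $L^2$ solution, $\g\mu$ is upgraded to $\cC^1(\ol\Omega)$ by Proposition \ref{prop:prop}, so $\g\mu\in N(\nabla+\g B\cdot)$ with $\Phi(\g\mu)=\g v$ (the trivial path at $x_0$ giving the identity). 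The delicate points are the a priori boundedness (hence $L^2$-membership) through the intrinsic diameter, and the passage from pointwise directional derivatives to genuine $\cC^1$ regularity before invoking Proposition \ref{prop:prop}.
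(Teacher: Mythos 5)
Your proof is correct and follows essentially the same route as the paper: both directions rest on Proposition \ref{prop:resolvant_kernel} (forward direction to show $\g\mu(x_0)\in E_{\g B}^{x_0}$, converse direction to turn a path-independent vector into an element of the null space), on Proposition \ref{prop:nonzero} for injectivity of the evaluation map, and on Proposition \ref{prop:dimconst}; the paper merely phrases the isomorphism as the two inequalities $\dim N\geq k \Leftrightarrow \dim E_{\g B}^{x}\geq k$. Your extra care on the regularity and $L^2$-membership of the constructed $\g\mu$ (Gr\"onwall bound, continuity of the flow, directional derivatives) fills in a step the paper dispatches with the single sentence ``these fields are of class $\cC^1$ as $\g B$ is continuous,'' and is a welcome addition rather than a deviation.
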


\begin{proof}First remark that the statement of the theorem is equivalent to the following:  
\begin{align*}
\forall k\in \{1,\dots,n\}, \quad \dim N(\nabla + \g B\cdot) \geq k\quad\eq \quad \g B\text{ is $k'$-conservative for some } k'\geq k. 
\end{align*}
We then prove the above equivalence, starting with the direct implication. If $\dim N(\nabla + \g B\cdot) \geq k\geq 1$, then there exists an independent family $(\g\mu_1,\dots,\g\mu_k)$ satisfying $\nabla\g\mu_\ell + \g B\cdot\g \mu_\ell = \g 0$ for all $\ell\in \{1,\dots,k\}$. For any $x_0\in \Omega$, the family of vectors $(\g\mu_1(x_0),\dots,\g\mu_k(x_0))$ is also independent, as stated in Corollary \ref{lem:gram_positive}. 

For any $\ell\in \{1,\dots,k\}$, $y\in\Omega$ and $\bm{\gamma}\in\Gamma(x,y)$, we have from Proposition \ref{prop:resolvant_kernel} that  $R_{\g B}^{\g \gamma}(\g\mu_\ell(x)) = \g\mu_\ell(y)$. Hence, the image of $\g\mu_\ell(x)$ does not depend on the path of $\g\gamma$, implying that $\g\mu_\ell(x)\in E_{\g B}^x$. The family $(\g\mu_1(x),\dots,\g\mu_k(x))$ being independent in $E_{\g B}^x$, this induces $\dim E_{\g B}^x\geq k$. Hence $\g B$ is $k'$-conservative for $k'\geq k$. 

Now, for the reciprocal implication, suppose that $\g B$ is $k'$-conservative for some $k'\geq k$. We fix $x\in\Omega$ and denote $(\bm{v}_1,\dots,\g v_k)$ an independent family of $E^{x}_{\g B}$. For any $y \in \Omega$, the vector $R^{\g\gamma}_{\g B}(\bm{v}_\ell)$ is independent on $\g\gamma\in \Gamma(x,y)$. Hence, we define
\begin{equation}\nonumber
\g\mu_\ell(y) := R^{\g\gamma}_{\g B}(\bm{v}_\ell),
\end{equation}
for $\ell\in\{1,\dots,k\}$. These fields are of class $\cC^1$ as $\g B$ is continuous. Proposition \ref{prop:resolvant_kernel} proves that these functions all belong to $N(\nabla + \g B\cdot)$. As the family $(\g\mu_1(x):=\g v_1,\dots,\g\mu_k(x):=\g v_k)$  is independent in $\R^n$, Corollary \ref{lem:gram_positive} implies that the family $(\g\mu_1,\dots,\g\mu_k)$ is also independent. Then, $\dim N(\nabla + \g B\cdot) \geq k$. 
\end{proof}

\subsection{Examples of $k$-conservative tensor fields}

We propose some examples to illustrate the notion of third-order conservative tensor fields.

\begin{example}
	Let $\g\mu \in \mathcal{C}^1(\ol\Omega,\mathbb{R}^n)$ so that for all $x \in \overline{\Omega}$, $\g\mu(x) \neq \g 0$. Then	\begin{align*}
		\g B := -\frac{\nabla \g\mu \otimes \g\mu }{|\g\mu|^2}.
	\end{align*}
is at least $1$-conservative.
\end{example}

\begin{proof} Simply write
\begin{equation}\nonumber
\g B\cdot \g\mu:= - \frac{\nabla \g\mu (\g\mu\cdot\g\mu)}{|\g\mu|^2} = -\nabla\g \mu.
\end{equation}
Then $\g\mu\in N(\nabla + \g B\cdot)$ and, from Proposition \ref{theo:C0kcons}, the field $\g B$ is at least $1$-conservative. 
\end{proof}

\begin{example} Let $M\in\cC^1(\ol\Omega,\R^{n\times n})$ be a smooth matrix field everywhere invertible. Call $(DM)_{ijk}:=\partial_{j}M_{ik}$. Then the tensor field 
\begin{equation}\nonumber
\g B:=M^{-1}\cdot DM
\end{equation}
is $n$-conservative. 
\end{example}

\begin{proof} Call $(\g e_1,\dots ,\g e_n)$ the canonical basis of $\R^n$ and define $\g \mu_i(x):=M^{-1}(x)\cdot \g e_i$ for all $i$. The family $(\g\mu_1,\dots,\g\mu_n)$ of $N(\nabla + \g B\cdot)$ is linearly independent. Indeed,
\begin{equation}\nonumber
\begin{aligned}
\sum_{i=1}^n\alpha_i\g\mu_i=\g 0\quad &\eq\quad M^{-1}(x)\cdot \sum_{i=1}^n\alpha_i \g e_i=\g 0,\quad \forall x\in \ol\Omega,\\
&\eq\quad\sum_{i=1}^n\alpha_i \g e_i=\g 0\quad \eq\quad \alpha_1=\dots=\alpha_n=0. 
\end{aligned}
\end{equation}
For each $i$, we see that $M(x)\cdot \g\mu_i(x)$ is constant in $\ol\Omega$, hence,
\begin{equation}\nonumber
\begin{aligned}
\nabla (M\cdot \g\mu_i) &= \g 0,\\
DM\cdot \g\mu_i +M\cdot \nabla \g\mu_i &= \g 0,\\
M^{-1}\cdot DM\cdot \g\mu_i +\nabla \g\mu_i &= \g 0,\\
\g B\cdot \g\mu_i +\nabla \g\mu_i &= \g 0. \\
\end{aligned}
\end{equation}
Applying Proposition \ref{theo:C0kcons}, we conclude that $\g B$ is $n$-conservative. 
\end{proof}

\begin{example} Let $\g b_1,\dots \g b_n\in \cC^0\big(\ol\Omega,\R^d\big)$ such that $\g b_1,\dots,\g b_k$ are conservative vector fields and $\g b_{k+1},\dots,\g b_n$ are not. Then the tensor field $\g B\in \cC^0\big(\ol\Omega,\R^{n\times d\times n}\big)$ defined by
\begin{equation}\nonumber
(\g B)_{iji} = (\g b_i)_j\quad\tand\quad (\g B)_{ijk} = 0\quad\text{if } i\neq k,
\end{equation}
is $k$-conservative. 
\end{example}

\begin{proof} We prove that for all $x\in\Omega$, $E_{\g B}^x = \vspan\{\g e_1,\dots,\g e_k\}$, which by definition implies that $\g B$ is $k$-conservative. Take $x,y\in\Omega$ and $\bm{\gamma}\in\Gamma(x,y)$. We first remark that the operator $R_{\g B}^{\bm{\gamma}}$ is diagonal in the canonical basis. Indeed, for any $\g w\in\R^n$, we have $R_{\g B}^{\bm{\gamma}}(\g w):=\bm{\varphi}(1)$, where $\bm{\varphi}$ is the unique solution of 
 \begin{equation}\nonumber
 \left\{\begin{aligned}
 \bm{\varphi}' &= -  \left((\g B\circ\g\gamma)\cdot\bm{\varphi}\right)\cdot \g\gamma'\quad \ton [0,1],\\
 \bm{\varphi}(0) &= \bm{w}.
 \end{aligned}\right.
 \end{equation}
 Here we can write that for all $i$,
 \begin{equation}\nonumber
 \begin{aligned}
 \bm{\varphi}_i' &= - \sum_{jk}(\g B\circ\g\gamma)_{ijk}\bm{\varphi}_k\g\gamma_j' = - \sum_{j}(\g B\circ\g\gamma)_{iji}\bm{\varphi}_i\g\gamma_j' = - \sum_{j}(\g b_i\circ\g\gamma)_{j}\bm{\varphi}_i\g\gamma_j',\\
 &= - \bm{\varphi}_i (\g b_i\circ\g\gamma)\cdot \g\gamma'.
 \end{aligned}
 \end{equation}
 Hence this ODE can be solved explicitly: for all $i$,
 \begin{equation}\nonumber
 \bm{\varphi}_i(t)=\g w_i\exp\left(-\int_0^t\g b_i(\g\gamma)\cdot\g\gamma'\right),\quad\forall t\in[0,1].
 \end{equation}
Then,
\begin{equation}\label{eq:Rdiago}
R_{\g B}^{\g\gamma}(\g w)_i =\g w_i\exp\left(-\int_0^1\g b_i(\g\gamma)\cdot\g\gamma'\right). 
\end{equation}
In particular, for all $\ell\in\{1,\dots,k\}$, 
\begin{equation}\nonumber
R_{\g B}^{\g\gamma}(\g e_\ell) =\exp\left(-\int_0^1\g b_\ell(\g\gamma)\cdot\g\gamma'\right)\g e_\ell,
\end{equation}
and, as $\g b_1,\dots,\g b_k$ are conservative, $R_{\g B}^{\g\gamma}(\g e_\ell)$ does not depend on $\g\gamma\in\Gamma(x,y)$. Consequently, $\g e_\ell\in E_{\g B}^x$ and so $\vspan\{\g e_1,\dots,\g e_k\}\subset E_{\g B}^x$. This proves that $\g B$ is at least $k$-conservative. 

Take now $\g w\notin\vspan\{\g e_1,\dots,\g e_k\}$, there exists $\ell>k$ such that $\g w_\ell\neq 0$. As $\g b_\ell$ is not conservative, there exists $y\in\Omega$ and $\g\gamma_1,\g\gamma_2\in\Gamma(x,y)$ such that 
\begin{equation}\nonumber
\int_0^1\g b_\ell(\g\gamma_1)\cdot\g\gamma'_1\neq \int_0^1\g b_\ell(\g\gamma_2)\cdot\g\gamma'_2.
\end{equation}
From \eqref{eq:Rdiago} we deduce that $R_{\g B}^{\g\gamma_1}(\g w) \neq R_{\g B}^{\g\gamma_2}(\g w)$. This means that $\g w\notin E_{\g B}^x$ and finally $E_{\g B}^x=\vspan\{\g e_1,\dots,\g e_k\}$. 
\end{proof}

\subsection{Null space dimension for tensor fields in $L^p$}

In this subsection, we extend Theorem \ref{theo:C0kcons} to the case of a non-continuous tensor field $\g B\in L^p(\Omega, \mathbb{R}^{ n\times d \times n})$, according to the hypothesis of Proposition \ref{prop:expanded}. We first extend the notion of $k$-conservative tensor fields by density.

\begin{definition}
 For any $k\in\{0,\dots,n\}$, we say that $\g B \in L^p(\Omega, \mathbb{R}^{ n\times d \times n})$ is at least $k$-conservative if it belongs to the closure in $L^p$ of the set of all the at least continuous $k$-conservative tensor fields:
 \begin{equation}\nonumber
 \forall \e>0,\quad \exists \g B_\e \text{ continuous and at least $k$-conservative s.t.}  \quad \norm{\g B-\g B_\e}{L^p}<\e. 
 \end{equation} 
 Then, we say that $\g B \in L^p(\Omega, \mathbb{R}^{n \times d \times n})$ is $k$-conservative if it is at least $k$-conservative and not 
 at least $(k+1)$-conservative. 
\end{definition}

\begin{theorem} \label{theo:kernel-LP} Let $\g B\in L^p(\Omega,\R^{n\times d\times n})$, $p> d$ and $k\in\{0,\dots,n\}$, then
\begin{align*}\nonumber
\dim N(\nabla + \g B\cdot) = k\quad\eq \quad \g B\text{ is $k$-conservative. }
\end{align*}
\end{theorem}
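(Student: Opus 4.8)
The plan is to reduce the statement, exactly as in the proof of Theorem \ref{theo:C0kcons}, to proving for every $k\in\{1,\dots,n\}$ the single equivalence
\[
\dim N(\nabla + \g B\cdot) \geq k \quad\eq\quad \g B \text{ is at least } k\text{-conservative}.
\]
Indeed, $\g B$ is $k$-conservative iff it is at least $k$-conservative but not at least $(k+1)$-conservative, while $\dim N=k$ iff $\dim N\geq k$ but not $\dim N\geq k+1$; this equivalence therefore yields the theorem, including $k=0$, for which ``at least $0$-conservative'' holds trivially for every $\g B\in L^p$.

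For the reverse implication ($\eq$, right-to-left), I would take a sequence of continuous, at least $k$-conservative fields $\g B_\e\to\g B$ in $L^p$. By Theorem \ref{theo:C0kcons}, each admits an $L^2$-orthonormal family $(\g\mu_1^\e,\dots,\g\mu_k^\e)$ in $N(\nabla+\g B_\e\cdot)$. Being bounded in $L^2$, these converge weakly in $L^2$ and, by the compact embedding $L^2\hookrightarrow\hookrightarrow H^{-1}$, strongly in $H^{-1}$ to limits $\g\mu_i$. To pass to the limit in $\nabla\g\mu_i^\e=-\g B_\e\cdot\g\mu_i^\e$, I would show $\g B_\e\cdot\g\mu_i^\e\to\g B\cdot\g\mu_i$ in $H^{-1}$ by splitting $\g B_\e\cdot\g\mu_i^\e-\g B\cdot\g\mu_i=(\g B_\e-\g B)\cdot\g\mu_i^\e+\g B\cdot(\g\mu_i^\e-\g\mu_i)$: the first term tends to $0$ in $L^r\hookrightarrow H^{-1}$ (with $1/r=1/p+1/2$) by Hölder, and the second is handled by the same $\e$--$\delta$ mollification argument as in the proof of Theorem \ref{theo:closedrange}, combining the smooth bound of Appendix \ref{prop:poincare1} with the strong $H^{-1}$ convergence of $\g\mu_i^\e$. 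This gives $\g\mu_i\in N(\nabla+\g B\cdot)$; moreover the identity $\|\g\nu\|_{L^2}^2=\|\g\nu\|_{H^{-1}}^2+\|\nabla\g\nu\|_{H^{-1}}^2$ applied to $\g\mu_i^\e-\g\mu_i$ upgrades the convergence to strong $L^2$, so orthonormality is preserved in the limit and $(\g\mu_1,\dots,\g\mu_k)$ is independent, i.e. $\dim N(\nabla+\g B\cdot)\geq k$.

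For the direct implication ($\eq$, left-to-right), suppose $\dim N(\nabla+\g B\cdot)\geq k$ and pick an independent family $(\g\mu_1,\dots,\g\mu_k)$ in the kernel. By Proposition \ref{prop:prop} these lie in $W^{1,p}\hookrightarrow\cC^0(\ol\Omega)$ and, by Corollary \ref{lem:gram_positive}, are pointwise independent with uniformly positive-definite Gram matrix. I would mollify them into smooth $\g\mu_i^\e\to\g\mu_i$ in $W^{1,p}$ (still pointwise independent for $\e$ small) and separately mollify $\g B$ into a smooth $\wt{\g B}_\e$ with $\|\g B-\wt{\g B}_\e\|_{L^p}<\e$. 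The essential construction is to correct $\wt{\g B}_\e$ only on $\vspan\{\g\mu_i^\e(x)\}$ so as to enforce the kernel relations, leaving it unchanged on the orthogonal complement: writing $\g B$ columnwise as matrices $B^{(j)}\in\R^{n\times n}$, with $U^\e(x)=[\g\mu_1^\e(x)|\cdots|\g\mu_k^\e(x)]$ and $P^\e=U^\e(U^{\e T}U^\e)^{-1}U^{\e T}$ the projection onto that span, I would set
\[
B_\e^{(j)}:=\wt{B}_\e^{(j)}(I-P^\e)-[\partial_j\g\mu_1^\e|\cdots|\partial_j\g\mu_k^\e](U^{\e T}U^\e)^{-1}U^{\e T}.
\]
This $\g B_\e$ is continuous and satisfies $\nabla\g\mu_i^\e+\g B_\e\cdot\g\mu_i^\e=0$, so by Theorem \ref{theo:C0kcons} it is at least $k$-conservative.

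The main obstacle is the final estimate $\|\g B-\g B_\e\|_{L^p}\to 0$. The difference $\wt{\g B}_\e-\g B_\e$ is an operator supported on $\vspan\{\g\mu_i^\e(x)\}$ whose action on each $\g\mu_i^\e$ equals $\wt{B}_\e^{(j)}\g\mu_i^\e+\partial_j\g\mu_i^\e$; rewriting this as $(\wt{B}_\e^{(j)}-B^{(j)})\g\mu_i^\e+B^{(j)}(\g\mu_i^\e-\g\mu_i)+\partial_j(\g\mu_i^\e-\g\mu_i)$ and using $B^{(j)}\g\mu_i=-\partial_j\g\mu_i$, each term is small in $L^p$ (a product of an $L^p$-small factor with an $L^\infty$-bounded one, or a $W^{1,p}$-convergence term). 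The uniform lower bound on the Gram matrix controls $(U^{\e T}U^\e)^{-1}$ uniformly, so the pointwise operator norm of $\wt{\g B}_\e-\g B_\e$ is bounded by a constant times the $\max_i$ of these small quantities; integrating gives $\|\wt{\g B}_\e-\g B_\e\|_{L^p}\to 0$, and together with $\|\g B-\wt{\g B}_\e\|_{L^p}<\e$ this yields $\g B_\e\to\g B$ in $L^p$, hence that $\g B$ is at least $k$-conservative.
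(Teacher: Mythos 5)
Your proof is correct and follows the same overall architecture as the paper's: the same reduction of the theorem to the single equivalence $\dim N(\nabla+\g B\cdot)\geq k \Leftrightarrow \g B$ is at least $k$-conservative, and, for the direct implication, the same mollify-then-correct construction. Your projection formula $B_\e^{(j)}=\wt{B}_\e^{(j)}(I-P^\e)-[\partial_j\g\mu_1^\e|\cdots|\partial_j\g\mu_k^\e](U^{\e T}U^\e)^{-1}U^{\e T}$ is an algebraic repackaging of the paper's correction $\g B^\e=\g B_1^\e-\g B_2^\e$ with $\g B_2^\e=\sum_j X_j^\e\otimes\g\mu_j^\e$ and $X_j^\e=\sum_i (G^\e)^{-1}_{ji}F_i^\e$: both hinge on the uniform invertibility of the Gram matrix supplied by Corollary \ref{lem:gram_positive} and on the $L^p$-smallness of the residuals $\wt{B}_\e^{(j)}\g\mu_i^\e+\partial_j\g\mu_i^\e$, which you estimate by exactly the paper's decomposition. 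The one genuine divergence is in the reverse implication: the paper proves that $(\nabla+\g B_\ell\cdot)\to(\nabla+\g B\cdot)$ in operator norm and then invokes the abstract Lemma \ref{prop:lemmaLp}, which uses the closed range property of the limit operator from Theorem \ref{theo:closedrange}; you instead pass to the limit directly on the orthonormal kernel families, using the compact embedding of $L^2$ into $H^{-1}$ and the identity $\norm{\g\nu}{L^2}^2=\norm{\g\nu}{H^{-1}}^2+\norm{\nabla\g\nu}{H^{-1}}^2$ to upgrade weak $L^2$ convergence to strong, so that orthonormality (hence independence) survives in the limit. Your route is somewhat more self-contained, since it replays the mechanism of the closed-range proof rather than citing the abstract lemma, at the cost of redoing the $\e$-mollification estimate for $\g B\cdot(\g\mu_i^\e-\g\mu_i)$; the paper factors that work into a reusable appendix lemma. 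Both are valid. The only cosmetic slip is that the product bound you need is Lemma \ref{prop:products} rather than Proposition \ref{prop:poincare1} (the paper's own proof of Theorem \ref{theo:closedrange} contains the same mis-citation).
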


\begin{proof} As in Theorem \ref{theo:C0kcons}, the statement of the present theorem is equivalent to the following:
\begin{equation}\nonumber
\forall k\in \{1,\dots,n\}, \quad \dim N(\nabla + \g B\cdot) \geq k\quad\eq \quad \g B\text{ is at least $k$-conservative}.
\end{equation}
We start here with reciprocal sense.   

Let $\g B\in L^p(\Omega,\R^{n\times d\times n})$ being at least $k$-conservative for $k\geq 1$, then there exists by definition a sequence $\g B_\ell$ of continuous $k$-conservative tensors such that $\g B_\ell\to \g B$ in $L^p$ when $\ell\to +\infty$. The sequence of operators $(\nabla + \g B_\ell\cdot)$ converges to $(\nabla + \g B\cdot)$ in $\cL(L^2(\Omega,\R^n),H^{-1}(\Omega,\R^{n\times d}))$. Indeed, for $\g\mu\in L^2(\Omega,\R^n)$ and $V\in H^{1}_0(\Omega,\R^{n\times d})$, 
\begin{align*}
\inner{((\nabla + \g B\cdot)-(\nabla + \g B_\ell\cdot))\g\mu,V}{H^{-1},H^1_0} &= \inner{(\g B-\g B_\ell)\cdot \g\mu,V}{H^{-1},H^1_0} = \int_\Omega ((\g B-\g B_\ell)\cdot \g\mu) : V^T\\ 
&\leq \norms{\g B-\g B_\ell}{L^p}\norm{\g\mu}{L^2}\norm{V}{L^q}\\
&\leq C \norms{\g B-\g B_\ell}{L^p}\norm{\g\mu}{L^2}\norm{V}{H^1_0},
\end{align*}
where $1/q=1/2-1/p$ and $C>0$ is the Sobolev embedding constant of $H^1_0\hookrightarrow L^q$. Then we get that 
\begin{align*}
\norm{(\nabla + \g B\cdot )-(\nabla + \g B_\ell\cdot)}{L^2,H^{-1}}\leq C \norms{\g B-\g B_\ell}{L^p} \to 0. 
\end{align*}
From Theorem \ref{theo:C0kcons}, we know that for each $\ell\in\N$, $\dim N(\nabla + \g B_\ell\cdot)=k$ as $\g B_\ell$ is continuous and $k$-conservative. Recalling that $(\nabla + \g B\cdot)$ has closed ranged from Theorem \ref{theo:closedrange}, we can apply the following Lemma.

\begin{lemma}\label{prop:lemmaLp} Let $H$ be an Hilbert space and $F$ a normed vector space. Let $(A_\ell)_{\ell\in\N}$ be a sequence of $\cL(H,F)$ that converges to $A\in\cL(H,F)$ for the operator norm. If
\begin{itemize}

\item[-] $A$ has closed range,

\item[-] $\exists k\in\N$ such that $\forall\ell\in\N$, $\dim N(A_\ell) \geq k$,

\end{itemize}
then  $\dim N(A)\geq k$. 
\end{lemma}

The proof of this lemma if given in \ref{app:approx}. We apply this result to the sequence of operators $(\nabla + \g B_\ell\cdot)$  which gives immediately $\dim N(\nabla + \g B\cdot)\geq k$. 
\medskip

Conversely, suppose that $\dim N(\nabla + \g B\cdot) \geq k$. We must show that $\g B$ can be approached by some continuous $k$-conservative tensor fields. Fix $\e>0$ and call $\g B_1^\e\in \cC^\infty(\ol\Omega,\R^{n\times d\times n})$ any smooth approximation of  $\g B$ such that 
\begin{align*} 
\norm{\g B-\g B^\e_1}{L^p}<\e.
\end{align*}
There is no reason for $\g B^\e_1$ to be $k$-conservative. Therefore, we shall construct from $\g B^\e_1$ another smooth approximation  that is $k$-conservative. 

Let $(\g\mu_1,\dots,\g\mu_k)$ be an orthonormal family of $N(\nabla + \g B\cdot)$. From  Proposition \ref{prop:prop}, we know that these functions belongs to $W^{1,p}(\Omega,\R^n)$. We approximate this family by a smooth one, $(\g\mu_1^\e,\dots,\g\mu_k^\e)\in \cC^\infty(\ol\Omega,\R^n)$, such that
\begin{equation}\nonumber
\norm{\g\mu_\ell^\e-\g\mu_\ell}{W^{1,p}}< \e,\quad \forall i\in\{1,\dots,k\}.
\end{equation}
For each $\ell$, we define the residual:
\begin{equation}\label{eq:res}
F_i^\e:= \nabla \g\mu_i^\e + \g B_{1}^\e \cdot \g\mu_i^\e.
\end{equation}
We claim that this residual is small in $L^p$. Indeed, subtracting the equation $\g 0=\nabla \g\mu_\ell + \g B \cdot \g\mu_\ell$ to the previous one, we get 
\begin{equation}\nonumber
\begin{aligned}
F_\ell^\e &:= \nabla \g\mu_\ell^\e-\nabla \g \mu_\ell + \g B_{1}^\e \cdot \g\mu_\ell^\e-\g B \cdot \bm{\mu}_\ell = \nabla(\g\mu_\ell^\e-\g \mu_\ell) + \g B\cdot (\g\mu_\ell^\e-\g \mu_\ell) + (\g B_{1}^\e-\g B)\cdot \g\mu_\ell^\e,\\
\norm{F_\ell^\e}{L^p} &\leq \norm{\g\mu_\ell^\e-\g \mu_\ell}{W^{1,p}}+ \norm{\g B}{L^p}\norm{\g\mu_\ell^\e-\g \mu_\ell}{L^\infty} + \norm{\g B-\g B^\e_1}{L^p}\norm{\g\mu_\ell^\e}{L^\infty}\\
&\leq \norm{\g\mu_\ell^\e-\g \mu_\ell}{W^{1,p}}+ \norm{\g B}{L^p} C \norm{\g\mu_\ell^\e-\g \mu_\ell}{W^{1,p}} + \norm{\g B-\g B^\e_1}{L^p} C \norm{\g\mu_\ell^\e}{W^{1,p}}\\
&\leq \e(1+C(\norm{\g B}{L^p}+ \norm{\g \mu_\ell}{W^{1,p}}+\e)),
\end{aligned}
\end{equation}
where $C>0$ is the embedding constant of $W^{1,p}\hookrightarrow L^\infty$. Hence, for all $\ell$, $\norm{F_\ell^\e}{L^p}\to 0$ when $\e\to 0$. We now look for another small smooth tensor $\g B_2^\e \in \mathcal{C}^\infty(\overline{\Omega}, \R^{n \times d \times n})$ satisfying the system of equations
\begin{align}
	\label{eq:scalar_prod} &\g B_2^\e \cdot \g\mu_\ell^\e = F_\ell^\e, \qquad \forall \ell \in \{1,...,k\}.
\end{align}
Specifically, we search $\g B_2^\e$ under the form
\begin{align*}
	\g B_2^\e :=  \sum_{j=1}^k X_j^\e \otimes \g\mu_j^\e,
\end{align*}
where $(X_1^\e,\dots X_k^\e) $ are smooth matrix fields of $\mathcal{C}^\infty(\overline{\Omega}, \R^{n \times d})$. With this decomposition, \eqref{eq:scalar_prod} reads:
\begin{align*}
	 \sum_{j=1}^k  (\g\mu_j^\e \cdot \g\mu_\ell^\e) X_j^\e = \sum_{j=1}^k  (G^\e)_{\ell j} X_j^\e = F_\ell^\e, \qquad \forall \ell \in \{1,\dots,k\},
\end{align*}
where $G^\e_{\ell j}(x) := \g\mu_\ell^\e(x) \cdot \g\mu_j^\e(x)$ is the Gram matrix of $(\g\mu_\ell^\e(x),..,\g\mu_k^\e(x))$ at any point $x \in \overline{\Omega}$. Denoting $G_{\ell j}(x) := \g\mu_1(x) \cdot \g\mu_j(x)$ the Gram matrix of $(\g\mu_\ell,..,\g\mu_k)$, the uniform convergence of each $\g\mu_\ell^\e$ to $\g\mu_i$ leads to the uniform convergence of $G^\e$ to $G$. The family $(\g\mu_1,\dots,\g\mu_k)$ being linearly independent, from Lemma \ref{lem:gram_positive}, we know that there exists $\alpha >0$ so that $G$ satisfies  
\begin{align*}
	 \bm{v}^T G(x) \bm{v} \ge \alpha |\bm{v}|^2, \qquad  \forall x \in \Omega, \; \forall \bm{v} \in \mathbb{R}^k.
\end{align*}
From the uniform convergence of $G^\e$, there exists $\e_0>0$ so that for all $0 < \e < \e_0$, $G^\e$ satisfies the same property with $\frac{\alpha}{2}$. This means that, for $\e$ sufficiently small, $G^\e(x)$ can be uniformly inverted, \emph{i.e.} there exists $C >0$ such that
\begin{align*} 
	\norm{G^\e(x)^{-1}}{p,p} \le C, \quad \forall \e \le \e_0,\quad\forall x\in \overline{\Omega}.
\end{align*}
We can therefore retrieve the matrices $X_i^\e$ \emph{via} the formula
\begin{align*}
	X_i^\e = \sum_j^k (G^\e)_{ij}^{-1} F_j^\e. 
\end{align*}
These matrix fields are small in $L^p(\Omega,\R^{n\times d})$ as for all $i_0\in\{1,\dots,k\}$,
\begin{align*}
 |X_{i_0}^\e(x)|^p \leq \sum_{i}|X_i^\e(x)|^p &\le C^p \sum_{j} |F_j^\e(x)|^p, \quad \forall \e \le \e_0, \; \forall x \in \overline{\Omega},\\
\|X_{i_0}^\e\|_{L^p}^p &\le C^p \, \sum_j \, \|F_j^\e \|_{L^p}^p, \quad \forall \e \le \e_0,\\
\end{align*}
and then
\begin{equation}\nonumber
\norm{X_{i_0}^\e}{L^p} \leq Cn^{1/p}\e.
\end{equation}
As a consequence, $\|\g B_2^\e\|_{L^p}\le C\e$ for $\e\le \e_0$. Finally, defining $\g B^\e = \g B_1^\e - \g B_2^\e \in \cC^\infty(\ol\Omega,\R^{n\times d\times n}) $,  we immediately remark that from \eqref{eq:res} and \eqref{eq:scalar_prod} we have
\begin{align*}
	\nabla \g\mu_i^\e + \g B^\e \cdot \g\mu_i^\e =\bm{0}, \qquad \forall i\in \{1,\dots,k\}.
\end{align*}
Therefore, the family of linearly independent vectors $(\g\mu_1^\e,\dots, \g\mu_k^\e)$ belongs to $N(\nabla + \g B^\e \cdot)$ and hence $\mathrm{dim} \, N(\nabla + \g B^\e \cdot) \ge k$, \emph{i.e.}, $\g B^\e$ is at least $k$-conservative. Furthermore, as for every $\e >0$, 
\begin{align*}
	\|\g B - \g B^\e \|_{L^p} \le (C +1 )\e,
\end{align*} 
the tensor $\g B\in L^p(\Omega,\R^{n\times d\times n})$ is by definition at least $k$-conservative.
\end{proof}

\section{Numerical applications for inverse elasticity problems in dimension two} 
\label{sec:num}

We illustrate that it is possible in practice to reconstruct a numerical approximation of the solution $\bm{\mu}$ using a discrete resolution of the Reverse Weak Formulation \eqref{eq:RWFcompact}. For that, we extend the finite element methodology proposed in \cite{bretin2023stability} for a single parameter recovery to the multi-parameter elastography problem. We observe some stable reconstructions from one to six parameter maps from both static and dynamic numerical experiments in dimension two. 

Both forward and inverse numerical computation have been made using the FIFEMlab environnement\footnote{https://github.com/seppecher/FIFEMLab}.

\subsection{Discretization of the forward problem}\label{sec:direct}

We set $\Omega$ as the unit square $\Omega:=(-1,1)^2\subset\R^2$, and set the forward problem as 
\begin{equation}\label{eq:forward}
\left\{\begin{aligned}
		-\div \left(\g C : \cE(\g u) \right) = \bm{f} &\quad \textrm{in }\Omega,\\
		\g u = \g 0 &\quad \textrm{on } \Gamma_1,\\
		\g u = \bm{g} &\quad \textrm{on } \Gamma_2,\\
		\g C : \cE(\g u) \cdot \bm{\nu} = \g 0 &\quad \textrm{on } \partial\Omega\bs\Gamma,
\end{aligned}\right.
\end{equation}
where $\Gamma_1$ and $\Gamma_2$ are opposite edges of the square and $\bm{g}$ is a Dirichlet boundary condition on $\Gamma_1$. 

The numerical approximation of \eqref{eq:forward} is computed using the classical $\P^1$ finite element method over an unstructured triangular mesh of resolution $h_{\text{Forward}}:=2.5.10^{-3}$. Using various positioning of $\Gamma_1$ and $\Gamma_2$ as well as various boundary conditions $\g g$, we compute different numerical solutions of \eqref{eq:forward}.

\subsection{Discretization of the inverse problem}\label{sec:discretization}

The discretization of the Reverse Weak Formulation \eqref{eq:RWF2} follows the idea from \cite{bretin2023stability}. A pair of interpolation spaces is chosen $(\cM_h,\cV_h)$ where $\cM_h$ approaches the parameters space $\cM:=L^2(\Omega,\R^n)$ while $\cV_h$ approaches the test function space $\cV:=H^1_0(\Omega,\R^N)$. The discrete version of the problem \eqref{eq:RWF2} reads: 
\begin{equation}\label{eq:RWFdis}\begin{aligned}
\text{Find }\g\mu^h\in \cM_h\quad \text{ s.t. } \quad
\inner{A^h(\g \mu^h) ,\g v^h}{\cV_h',\cV_h} = \inner{\g f^h,\g v^h}{\cV_h',\cV_h},\qquad \forall \g v^h\in \cV_h,
\end{aligned}\end{equation}
where the operator $A^h:\cM_h\to \cV_h' $ is defined by
$$\inner{A^h(\g \mu^h) ,\g v^h}{\cV_h',\cV_h} :=\int_\Omega (\g T_{\g u^h}\cdot\g \mu^h)\cddot (\nabla \g v^h)^ T, \quad \forall (\g \mu^h,\g v^h) \in (\cM_h,\cV_h),$$
and where ${\g u^h}$ and $\g f^h$ are discrete approximations of the data $\g u$ and $\g f$ that may contain both measurement noise and discretization errors.

Call $\{\g e_1,\dots,\g e_{p}\}$ the canonical basis of $\mathcal{V}_h$, $\{\bm{\e},\dots,\bm{\e}_{q}\}$ the canonical basis of $\mathcal{M}_h$ and define the finite element matrices 
\begin{align}\label{eq:matrixA}
\A_{ij} &:= \inner{A^h(\bm{\e}_j) ,\g e_i}{\cV_h',\cV_h} = \int_\Omega (\g T_{\g u^h}\cdot  \bm{\e}_j) \cddot (\nabla \bm{e}_i)^T,\qquad 1\leq i\leq p,\quad 1\leq j\leq q,\\
g_i&:=\inner{\g f^h ,\g e_i}{\cV_h',\cV_h} = \int_\Omega \g f^h\cdot\g e_i,\qquad 1\leq i\leq p. 
\end{align}
We write both $\g\mu^h$ and $\g v^h$ in the bases of $\cM_h$ and $\cV_h$ respectively: 
 \begin{align}
\g\mu^h :=\sum_{i=1}^{q} m_i \bm{\e}_i, \quad\textrm{and}\quad \g v^h:=\sum_{i=1}^{p}  w_i\g e_i,
 \end{align}
 the problem \eqref{eq:RWFdis} simply reads
 \begin{equation}\label{eq:RWFdis2}
 \begin{aligned}
 \g w^T\A \g m &= \g w^T\g g,\quad \forall \g w\in\R^p,\\
 \eq\quad  \A \g m &= \g g. 
 \end{aligned}
 \end{equation}

It turns out that the choice of the finite element pair of spaces $(\cM_h,\cV_h)$ is crucial to preserve the stability of the inversion. The analysis of this kind of discretization has been made in \cite{bretin2023stability} in the case of a single parameter recovery. In this reference, the authors present several pairs that preserve stability, and they suggest the use of a particular pair of finite element spaces called the honeycomb pair $(\P^0_\text{hexa},\P^1_0)$ made of piecewise-constant maps on a hexagonal tilling to approach the parameter space $L^2(\Omega)$ and piecewise linear maps on the canonical sub-triangulation of the hexagonal tilling to approach $H^1_0(\Omega)$. This specific pair shows the best numerical stability while allowing consideration of discontinuous parameter maps. See Subsection 5.2 in \cite{bretin2023stability} for more details.

\subsection{The space-dependent least-squares solution}\label{sec:least-square}

Unlike the finite element method for solving direct PDE problems, the finite element matrix $\A$ defined in \eqref{eq:matrixA} is rectangular and is over-determinate in general. Hence, the linear system \eqref{eq:RWFdis2} may have no solution. A least squares approach is required. 

We point out that since the discrete operator $A^h$ is defined in $\cL(\cM_h,\cV_h')$, in order to approach $A_{\g u}\in\cL(\cM,\cV')$, the norms used for the least squares approach must be these of spaces $\cM_h$ and $\cV_h'$.

\paragraph{Case $\g f^h\neq \g 0$} In this case, the equation \eqref{eq:RWFdis} is replaced by 
\begin{equation}\label{eq:RWFLS}
\begin{aligned}
 (\g\mu^h)^* &:=\argmin_{\g\mu^h \in \cM_h} \max_{\g v^h\in \cV_h} \frac{\inner{A^h(\g \mu^h)-\g f^h ,\g v^h}{\cV_h',\cV_h}}{\norm{\g v}{\cV_h}}. 
\end{aligned}
\end{equation}
Define now the positive definite matrices
\begin{equation}\nonumber
(\S_{\cM})_{ij}=\inner{\bm{\e}_i,\bm{\e}_j}{\cM} \qquad\text{and}\qquad (\S_\cV)_{ij}=\inner{\g e_i ,\g e_j}{\cV}, 
\end{equation}
and their corresponding square-root symmetric matrices $\B_{\cM}$ and $\B_{\cV}$, (i.e. such that $\B_{\cM}^2=\S_{\cM}$ and $\B_{\cV}^2=\S_{\cV}$), then
\begin{equation}\nonumber
\norms{\g\mu^h}{\cM_h} = \norm{\B_{\cM}\g m}{2}\quad\tand\quad \norms{\g v^h}{\cV_h} = \norm{\B_{\cV}\g w}{2}.
\end{equation}
In the canonical bases, the problem \eqref{eq:RWFLS} reads
\begin{align*}
\g m^*&:=\argmin_{\g m\in\R^q}\max_{\g w\in\R^p}\frac{\g w^T(\A \g m-\g g)}{\norm{\B_{\cV}\g w}{2}}=\argmin_{\g m\in\R^q}\max_{\g\xi\in\R^p}\frac{\g\xi^T\B_{\cV}^{-1}(\A \g m-\g g)}{\norm{\g\xi}{2}}\\
&=\argmin_{\g m\in\R^q}\norm{\B_{\cV}^{-1}\A \g m-\B_{\cV}^{-1}\g g}{2}.
\end{align*}
As this is now a classical Euclidian least squares inversion of a linear system, we know that the unique solution satisfies
\begin{equation}\nonumber
\begin{aligned}
\A^T\B_{\cV}^{-1}\B_{\cV}^{-1}\A \g m^* &= \A^T\B_{\cV}^{-1}\B_{\cV}^{-1}\g g,\\
\A^T\S_{\cV}^{-1}\A \g m^* &= \A^T\S_{\cV}^{-1}\g g.
\end{aligned}
\end{equation}
Hence, defining the symmetric matrix $\H:=\A^T\S_{\cV}^{-1}\A$, the least square solution of \eqref{eq:RWFdis} is 
\begin{equation}\label{eq:matrixH}
\begin{aligned}
 (\g\mu^h)^* :=\sum_{i=1}^{q} m_i^* \bm{\e}_i,\quad \text{where}\quad  \g m^*\quad \text{solves}\quad  \H\g m^* =  \A^T\S_{\cV}^{-1}\g g. 
\end{aligned}
\end{equation}

\paragraph{Case $\g f^h= \g 0$}

For the homogeneous problem, which is in fact an eigenvalue problem, we add the condition $\norm{\g\mu^h}{\cM_h}=1$ to \eqref{eq:RWFLS}. In the canonical bases, 
\begin{equation}\nonumber
\begin{aligned}
\g m^*&:=\argmin_{\underset{\norm{\B_{\cM}\g m}{2}=1}{\g m\in\R^q}}\max_{\g w\in\R^p}\frac{\g w^T\A \g m}{\norm{\B_{\cV}\g w}{2}}, \quad \mathrm{or}\\
\g m^*&=\B_{\cM}^{-1}\g z^*\qquad\text{with}\qquad \g z^*=\argmin_{\underset{\norm{\g z}{2}=1}{\g z\in\R^q}}\max_{\g\xi\in\R^p}\frac{\g\xi^T\B_{\cV}^{-1}\A \B_{\cM}^{-1}\g z}{\norm{\g\xi}{2}} = \argmin_{\underset{\norm{\g z}{2}=1}{\g z\in\R^q}}\norm{\B_{\cV}^{-1}\A \B_{\cM}^{-1}\g z}{2}. \\
\end{aligned}
\end{equation}
We remark that $\g z^*$ is the eigenvector associated to the smallest eigenvalue of the positive semidefinite (or definite) matrix $\B_{\cM}^{-1}\A^T \B_{\cV}^{-2}\A \B_{\cM}^{-1} = \B_{\cM}^{-1}\H\, \B_{\cM}^{-1}$. Hence, the solution in the least square sense of \eqref{eq:RWFLS} in the case $\g f^h= \g 0$ is found by solving the problem 
\begin{equation}\label{eq:eig}
\begin{aligned}
 (\g\mu^h)^* :=\sum_{i=1}^{q} m_i^* \bm{\e}_i,\quad \g m^*&=\B_{\cM}^{-1}\g z^*\quad \text{where}\  \g z^*\ \text{is the first eigenvector of}\  \B_{\cM}^{-1}\H\, \B_{\cM}^{-1},
\end{aligned}
\end{equation}
up to a multiplicative constant.

\begin{remark} In our case, as we take the interpolation space $\cM_h$ as a $\P^0$ finite element space over a structured mesh, the matrix $\B_{\cM}$ is of the form $\lambda I$. The coefficient $\lambda$ is the square root of the finite element surface in the mesh. Then the above problem is simplified to
\begin{equation}\label{eq:eig2}
\begin{aligned}
 \text{Find}\quad (\g\mu^h)^* :=\sum_{i=1}^{q} m_i^* \bm{\e}_i,\quad\text{where}\quad  \g m^*\ \text{is the first eigenvector of}\  \H. 
\end{aligned}
\end{equation}

\end{remark}

\begin{remark}
While $\A$ and $\S_V$ are sparse matrices, this is not the case for $\H$ in general. Therefore, a direct solving of the eigenvalue problem \eqref{eq:eig2} might be inefficient for a large matrix $\H$. In order to preserve sparsity, we transform in practice the eigenvalue problem $\H\g m=\alpha \g m$ into 
\begin{equation}\nonumber
\begin{aligned}
\A^T\S_{\cV}^{-1}\A\g m=\alpha\, \g m \quad &\imp\quad \A\A^T\S_{\cV}^{-1}\A\g m=\alpha\, \A\g m,\\
&\imp \quad \A\A^T\S_{\cV}^{-1}\g y =\alpha\,\g y,\quad\text{with}\quad  \A\g m = \g y,\\
&\imp \quad \A\A^T\g z =\alpha\, \S_{\cV}\g z, \,\quad\text{with}\quad  \A\g m = \S_{\cV}\g z.
\end{aligned}
\end{equation}
The new problem  $\A\A^T\g z =\alpha \S_{\cV}\g z$ is now a sparse generalized eigenvalue problem that can be solved efficiently from native algorithms (such as \texttt{eigs} in Maltab for instance) even for very large matrices.

\end{remark}

\subsection{Reconstruction of a single parameter from a single static measurement}\label{sec:recon-mu}

In the first numerical test, we assume that the elastic tensor is of the form $\g C:=\mu^\tex \g I$ and $\g f=\bm{0}$. The map $\mu^\tex$ is chosen piecewise constant and we test two different boundary conditions
\begin{align*}
\g g_1=
\left[\begin{matrix}
1\\
-0.5
\end{matrix}\right]
\quad\tand\quad
\g g_2=
\left[\begin{matrix}
0\\
-1
\end{matrix}\right],
\end{align*}
applied on $\Gamma_2$, as illustrated in Figure \ref{fig:mu-exact}. 

We first compute the displacement field $\g u$ and the associated strain tensor $\cE(\g u)$ using the boundary condition $\g g_1$  (see Figure \ref{fig:direct1}). In this case, the tensor $\g T_{\g u}$ is given by $(\g T_{\g u})_{ij} := \cE(\g u)_{ij}$. The condition stated in Proposition \ref{prop:expanded} saying that $\g T_{\g u}$ must be invertible everywhere in the domain of interest $D$ is fulfilled as the maximum condition number of $\cE(\g u)$ is $13.8$. We then compute the numerical solution $\bm{\mu}^h$, up to a multiplicative constant, by solving the eigenvalue problem \eqref{eq:eig2}. We rescale the solution norm to compare it to the exact map $\mu^\tex$. The relative $L^2$-error between the exact solution and the reconstruction is $2.91\%$ (see Figure \ref{fig:mu-recon-case}).

\begin{figure}
\begin{center}
\begin{tikzpicture}[scale=1.7*\graphscale]
		\draw[ultra thick] (-1, -1) -- (-1, 1) [red]; 
		\draw[ultra thick] (-1, 1) -- (1, 1) [blue];    
		\draw[ultra thick] (1, 1) -- (1, -1) [red];         
		\draw[ultra thick] (1, -1) -- (-1, -1) [black!60!green];       
		
		\node[above] at (0.5, 1) {{$\Gamma_2$}};
		
		\node[below] at (-0.5, -1) {{$\Gamma_1$}};
		
		\draw[->, thick] (-1.4, 0) -- (1.4, 0) node[right] {$x_2$};
		\draw[->, thick] (0, -1.4) -- (0, 1.4) node[above] {$x_1$};
		\node[below right] at (1, 0) {1};
		\node[below right] at (0, 1) {1};
		\node[below right] at (-1, 0) {-1};
		\node[below right] at (0, -1) {-1};
		\node at (1, 0) {\textbullet};
		\node at (-1, 0) {\textbullet};
		\node at (0, 1) {\textbullet};
		\node at (0, -1) {\textbullet};
		
		
    		\draw[thick] (-0.6, -0.6) -- (-0.6, 0.6) ; 
			\draw[thick] (-0.6, 0.6) -- (0.6, 0.6)   ;  
			\draw[thick] (0.6, 0.6) -- (0.6, -0.6)   ;         
			\draw[thick] (0.6, -0.6) -- (-0.6, -0.6) ;
		\end{tikzpicture}
		\hspace{2cm}
		\begin{tikzpicture}[scale=1.25*\scale]
    		\begin{axis}[width=\width, height=\height, axis on top, scale only axis, xmin=\xmin, xmax=\xmax, ymin=\ymin, ymax=\ymax, xtick={\xmin,\pointsize*\xmax + (1-\pointsize)*\xmin,...,\xmax},ytick={\ymin,\pointsize*\ymax + (1-\pointsize)*\ymin,...,\ymax}, colormap/jet, colorbar,point meta min=3.5,point meta max=8.5]
    			\addplot graphics [xmin=\xmin,xmax=\xmax,ymin=\ymin,ymax=\ymax]{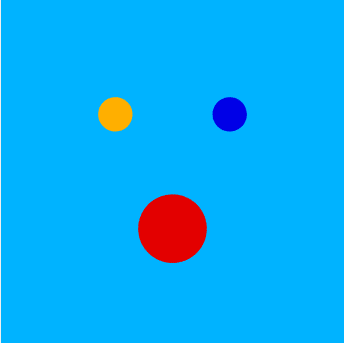};
    			\draw[thick] (-0.6, -0.6) -- (-0.6, 0.6) ; 
				\draw[thick] (-0.6, 0.6) -- (0.6, 0.6)   ;  
				\draw[thick] (0.6, 0.6) -- (0.6, -0.6)   ;         
				\draw[thick] (0.6, -0.6) -- (-0.6, -0.6) ;
    		\end{axis}
		\end{tikzpicture}
\caption{Left: the domain $\Omega$, the boundary parts $\Gamma_1$ and $\Gamma_2$. Right: the exact map $\mu$. On both, the black square $D = (-0.6,0.6)^2$ is the subdomain of interest where the inversion will be performed.}
\label{fig:mu-exact}
\end{center}
\end{figure}

\begin{figure}
\begin{center}
\begin{tikzpicture}[scale=\scale]
    \begin{axis}[width=\width, height=\height, axis on top, scale only axis, xmin=\xmin, xmax=\xmax, ymin=\ymin, ymax=\ymax, xtick={\xmin,\pointsize*\xmax + (1-\pointsize)*\xmin,...,\xmax},ytick={\ymin,\pointsize*\ymax + (1-\pointsize)*\ymin,...,\ymax}, colormap/jet, colorbar,point meta min=0,point meta max=1,title={$\g u_{1}$}]
    \addplot graphics [xmin=\xmin,xmax=\xmax,ymin=\ymin,ymax=\ymax]{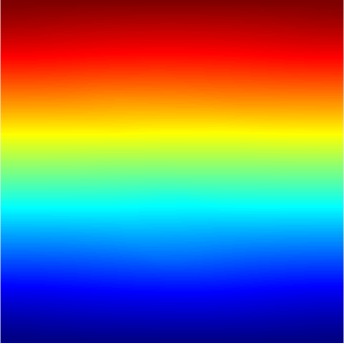};
    \end{axis}
\end{tikzpicture}
\begin{tikzpicture}[scale=\scale]
    \begin{axis}[width=\width, height=\height, axis on top, scale only axis, xmin=\xmin, xmax=\xmax, ymin=\ymin, ymax=\ymax, xtick={\xmin,\pointsize*\xmax + (1-\pointsize)*\xmin,...,\xmax},ytick={\ymin,\pointsize*\ymax + (1-\pointsize)*\ymin,...,\ymax}, colormap/jet, colorbar,point meta min=-0.60135,point meta max=0.09985,title={$\g u_{2}$}]
    \addplot graphics [xmin=\xmin,xmax=\xmax,ymin=\ymin,ymax=\ymax]{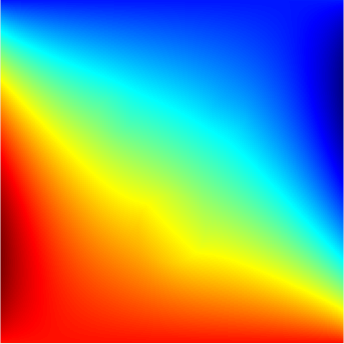};
    \end{axis}
\end{tikzpicture}\\
\bigskip

\begin{tikzpicture}[scale=\scale]
    \begin{axis}[width=\width, height=\height, axis on top, scale only axis, xmin=\xmin, xmax=\xmax, ymin=\ymin, ymax=\ymax, xtick={\xmin,\pointsize*\xmax + (1-\pointsize)*\xmin,...,\xmax},ytick={\ymin,\pointsize*\ymax + (1-\pointsize)*\ymin,...,\ymax}, colormap/jet, colorbar,point meta min=-0.072681,point meta max=0.079901,title={$\cE(\g u)_{11}$}]
    \addplot graphics [xmin=\xmin,xmax=\xmax,ymin=\ymin,ymax=\ymax]{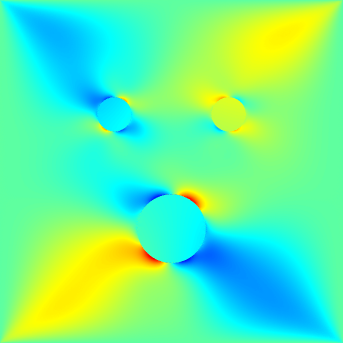};
    \end{axis}
\end{tikzpicture}
        \begin{tikzpicture}[scale=\scale]
    \begin{axis}[width=\width, height=\height, axis on top, scale only axis, xmin=\xmin, xmax=\xmax, ymin=\ymin, ymax=\ymax, xtick={\xmin,\pointsize*\xmax + (1-\pointsize)*\xmin,...,\xmax},ytick={\ymin,\pointsize*\ymax + (1-\pointsize)*\ymin,...,\ymax}, colormap/jet, colorbar,point meta min=0.00021189,point meta max=0.25641,title={$\cE(\g u)_{21}=\cE(\g u)_{12}$}]
    \addplot graphics [xmin=\xmin,xmax=\xmax,ymin=\ymin,ymax=\ymax]{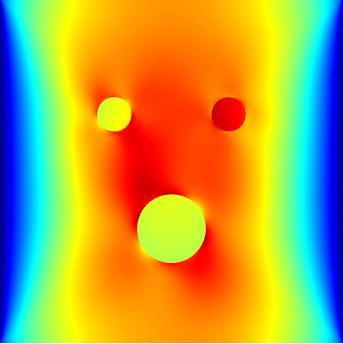};
    \end{axis}
\end{tikzpicture}
 \begin{tikzpicture}[scale=\scale]
    \begin{axis}[width=\width, height=\height, axis on top, scale only axis, xmin=\xmin, xmax=\xmax, ymin=\ymin, ymax=\ymax, xtick={\xmin,\pointsize*\xmax + (1-\pointsize)*\xmin,...,\xmax},ytick={\ymin,\pointsize*\ymax + (1-\pointsize)*\ymin,...,\ymax}, colormap/jet, colorbar,point meta min=-1.182,point meta max=0.66523,title={$\cE(\g u)_{22}$}]
    \addplot graphics [xmin=\xmin,xmax=\xmax,ymin=\ymin,ymax=\ymax]{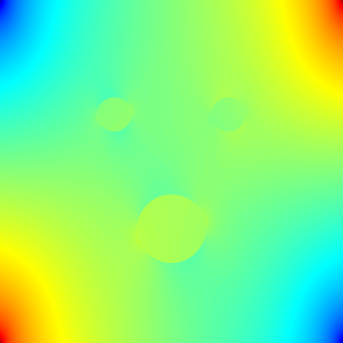};
    \end{axis}
\end{tikzpicture}
\end{center}
\caption{Numerical solution of the forward problem with the boundary condition $\g g_1$. Top: the computed displacement field. Bottom: the corresponding strain $\cE(\g u)$. The strain is everywhere invertible as its maximum condition number is $13.8$. }
\label{fig:direct1}
\end{figure}


\begin{figure}
\def\xmin{-0.6}\def\xmax{0.6}\def\ymin{-0.6}\def\ymax{0.6}
\begin{center}
\begin{tikzpicture}[scale=\scale]
    \begin{axis}[width=\width, height=\height, axis on top, scale only axis, xmin=\xmin, xmax=\xmax, ymin=\ymin, ymax=\ymax, xtick={\xmin,\pointsize*\xmax + (1-\pointsize)*\xmin,...,\xmax},ytick={\ymin,\pointsize*\ymax + (1-\pointsize)*\ymin,...,\ymax}, colormap/jet, colorbar,point meta min=3.5,point meta max=8.5,title={Exact map}]
    \addplot graphics [xmin=\xmin,xmax=\xmax,ymin=\ymin,ymax=\ymax]{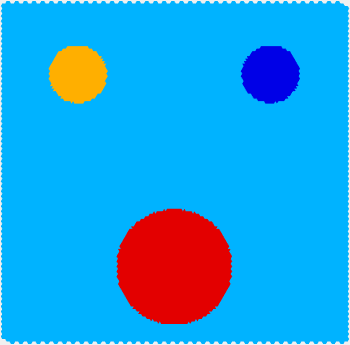};
    \end{axis}
\end{tikzpicture}
\hspace{1cm}
\begin{tikzpicture}[scale=\scale]
    \begin{axis}[width=\width, height=\height, axis on top, scale only axis, xmin=\xmin, xmax=\xmax, ymin=\ymin, ymax=\ymax, xtick={\xmin,\pointsize*\xmax + (1-\pointsize)*\xmin,...,\xmax},ytick={\ymin,\pointsize*\ymax + (1-\pointsize)*\ymin,...,\ymax}, colormap/jet, colorbar,point meta min=3.5,point meta max=8.5,title={Reconstructed map}]
    \addplot graphics [xmin=\xmin,xmax=\xmax,ymin=\ymin,ymax=\ymax]{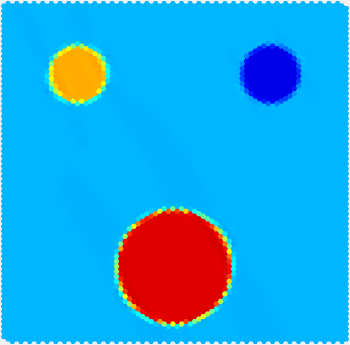};
    \end{axis}
\end{tikzpicture}
\caption{The exact map $\mu^\tex$ and the reconstructed map $\mu$ from the data computed with the boundary condition $\g g_1$. The mesh resolution is $h=0.01$ and the relative $L^2$-error is $2.91\%$.}
\label{fig:mu-recon-case}
\end{center}
\end{figure}

We then compute the displacement and strain using the boundary condition $\g g_2$, as shown in Figure \ref{fig:direct2}. We can observe that the strain tensor is now degenerated in the domain of interest with a maximum condition number of $9.32.10^7$. Therefore, the hypothesis of Proposition \ref{prop:expanded} is not fulfilled and we expect an unstable inversion, as observed in Figure \ref{fig:mu-non-recon-case}, where the reconstruction failed with a relative $L^2$-error of $18.3\%$. More precisely, the instability occurs in the horizontal direction, which corresponds to the degenerate direction of the strain $\cE(\g u)$.

\begin{figure}
\begin{center}
	\def\xmin{-1}\def\xmax{1}\def\ymin{-1}\def\ymax{1}\def\pointsize{0.25}\def\width{\mainwidth}\def\height{\mainheight}
        \begin{tikzpicture}[scale=\scale]
    \begin{axis}[width=\width, height=\height, axis on top, scale only axis, xmin=\xmin, xmax=\xmax, ymin=\ymin, ymax=\ymax, xtick={\xmin,\pointsize*\xmax + (1-\pointsize)*\xmin,...,\xmax},ytick={\ymin,\pointsize*\ymax + (1-\pointsize)*\ymin,...,\ymax}, colormap/jet, colorbar,point meta min=-0.004262,point meta max=0.0047156,title={$\g u_{1}$}]
    \addplot graphics [xmin=\xmin,xmax=\xmax,ymin=\ymin,ymax=\ymax]{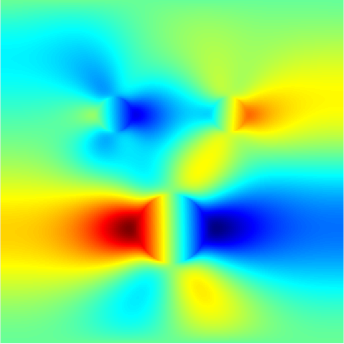};
    \end{axis}
\end{tikzpicture}
\begin{tikzpicture}[scale=\scale]
    \begin{axis}[width=\width, height=\height, axis on top, scale only axis, xmin=\xmin, xmax=\xmax, ymin=\ymin, ymax=\ymax, xtick={\xmin,\pointsize*\xmax + (1-\pointsize)*\xmin,...,\xmax},ytick={\ymin,\pointsize*\ymax + (1-\pointsize)*\ymin,...,\ymax}, colormap/jet, colorbar,point meta min=-1,point meta max=0,title={$\g u_{2}$}]
    \addplot graphics [xmin=\xmin,xmax=\xmax,ymin=\ymin,ymax=\ymax]{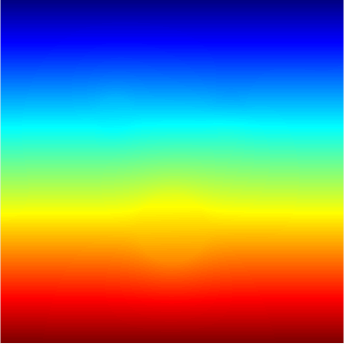};
    \end{axis}
\end{tikzpicture}\\
\bigskip

\begin{tikzpicture}[scale=\scale]
    \begin{axis}[width=\width, height=\height, axis on top, scale only axis, xmin=\xmin, xmax=\xmax, ymin=\ymin, ymax=\ymax, xtick={\xmin,\pointsize*\xmax + (1-\pointsize)*\xmin,...,\xmax},ytick={\ymin,\pointsize*\ymax + (1-\pointsize)*\ymin,...,\ymax}, colormap/jet, colorbar,point meta min=-0.037974,point meta max=0.032518,title={$\cE(\g u)_{11}$}]
    \addplot graphics [xmin=\xmin,xmax=\xmax,ymin=\ymin,ymax=\ymax]{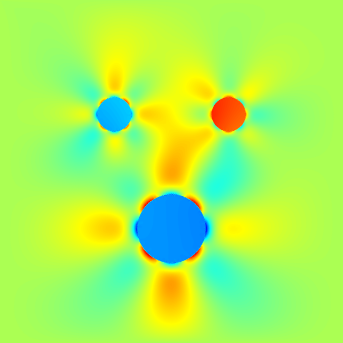};
    \end{axis}
\end{tikzpicture}
\begin{tikzpicture}[scale=\scale]
    \begin{axis}[width=\width, height=\height, axis on top, scale only axis, xmin=\xmin, xmax=\xmax, ymin=\ymin, ymax=\ymax, xtick={\xmin,\pointsize*\xmax + (1-\pointsize)*\xmin,...,\xmax},ytick={\ymin,\pointsize*\ymax + (1-\pointsize)*\ymin,...,\ymax}, colormap/jet, colorbar,point meta min=-0.085949,point meta max=0.087775,title={$\cE(\g u)_{21}=\cE(\g u)_{12}$}]
    \addplot graphics [xmin=\xmin,xmax=\xmax,ymin=\ymin,ymax=\ymax]{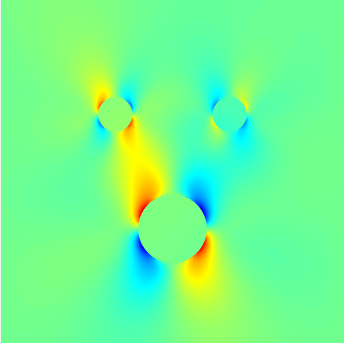};
    \end{axis}
\end{tikzpicture}
\begin{tikzpicture}[scale=\scale]
    \begin{axis}[width=\width, height=\height, axis on top, scale only axis, xmin=\xmin, xmax=\xmax, ymin=\ymin, ymax=\ymax, xtick={\xmin,\pointsize*\xmax + (1-\pointsize)*\xmin,...,\xmax},ytick={\ymin,\pointsize*\ymax + (1-\pointsize)*\ymin,...,\ymax}, colormap/jet, colorbar,point meta min=-0.62445,point meta max=-0.32843,title={$\cE(\g u)_{22}$}]
    \addplot graphics [xmin=\xmin,xmax=\xmax,ymin=\ymin,ymax=\ymax]{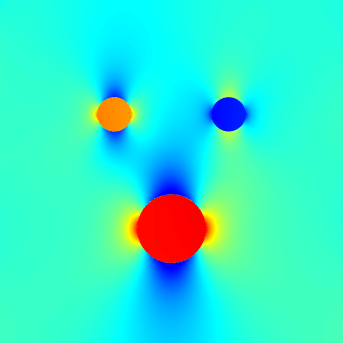};
    \end{axis}
\end{tikzpicture}
\caption{Solution of the forward problem with the boundary condition $\g g_2$. Top: the computed displacement field. Bottom: the corresponding strain $\cE(\g u)$. The strain is numerically degenerated as its maximum condition number of $\cE(\g u)$ is $9.32.10^7$. }
	\label{fig:direct2}
\end{center}
\end{figure}

\begin{figure}
\begin{center}
\def\xmin{-0.6}\def\xmax{0.6}\def\ymin{-0.6}\def\ymax{0.6}\def\pointsize{0.25}\def\width{\mainwidth}\def\height{\mainheight}
	\begin{tikzpicture}[scale=1.2*\scale]
    \begin{axis}[width=\width, height=\height, axis on top, scale only axis, xmin=\xmin, xmax=\xmax, ymin=\ymin, ymax=\ymax, xtick={\xmin,\pointsize*\xmax + (1-\pointsize)*\xmin,...,\xmax},ytick={\ymin,\pointsize*\ymax + (1-\pointsize)*\ymin,...,\ymax}, colormap/jet, colorbar,point meta min=3.5,point meta max=8.5,title={Exact map}]
    \addplot graphics [xmin=\xmin,xmax=\xmax,ymin=\ymin,ymax=\ymax]{mu_exact_small_mu_1.png};
    \end{axis}
\end{tikzpicture}
 \begin{tikzpicture}[scale=1.2*\scale]
    \begin{axis}[width=\width, height=\height, axis on top, scale only axis, xmin=\xmin, xmax=\xmax, ymin=\ymin, ymax=\ymax, xtick={\xmin,\pointsize*\xmax + (1-\pointsize)*\xmin,...,\xmax},ytick={\ymin,\pointsize*\ymax + (1-\pointsize)*\ymin,...,\ymax}, colormap/jet, colorbar,point meta min=3.5,point meta max=8.5,title={Reconstructed map}]
    \addplot graphics [xmin=\xmin,xmax=\xmax,ymin=\ymin,ymax=\ymax]{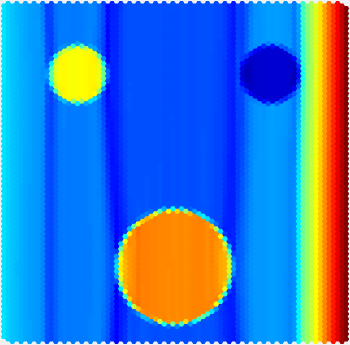};
    \end{axis}
\end{tikzpicture}
\caption{The exact map $\mu^\tex$ and the reconstructed map $\mu$ from the data field computed with the boundary condition $\g g_2$. The mesh resolution is $h=0.01$ and the relative $L^2$-error is $18.3\%$.}
\label{fig:mu-non-recon-case}
\end{center}

\end{figure}

The stability of these two cases can be illustrated through the computation of the first eigenvalues of the matrix $\H$ defined in \eqref{eq:matrixH}. Indeed, as the numerical solution $\g\mu^h$ is computed as the first eigenvector of this matrix, the stability of this recovery is related to the spectral gap between the two first eigenvalues:  $\alpha_2(\mathbb{H}) - \alpha_1(\mathbb{H})$. In Figure \ref{fig:mu-spectral}, we represent the ten first eigenvalues of $\H$ for the two cases $\g g_1$ and $\g g_2$. As expected, the spectral gap is high in the first case ($\approx 0.11$) and low in the second case ($\approx 1.2.10^{-3}$).

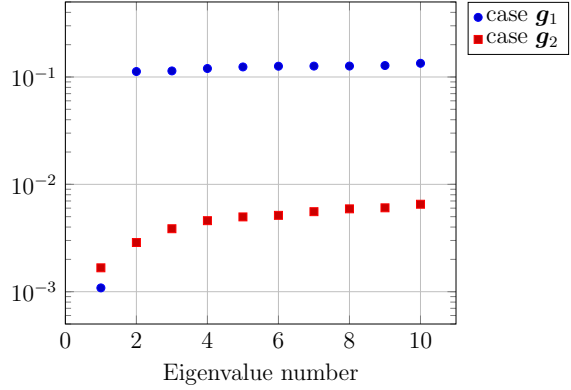
\begin{figure} \begin{center}\begin{tikzpicture}[scale=\graphscale]\begin{semilogyaxis}
   [xmin=0, xmax=11, ymin=0.0005, ymax=0.5, grid = major, 
   xlabel = Eigenvalue number,
   legend entries ={case $\g g_1$,case $\g g_2$}, legend pos=outer north east,]

\addplot+[only marks] table{
1    0.0010849
2    0.11247
3    0.11391
4    0.12
5    0.12409
6    0.12602
7    0.12636
8    0.12645
9    0.12786
10    0.13419
};
\addplot+[only marks] table{
1    0.0016686
2    0.0028706
3    0.0038592
4    0.0045873
5    0.004973
6    0.0051441
7    0.0055633
8    0.0059116
9    0.0060437
10    0.0065281
};
\end{semilogyaxis}\end{tikzpicture}\end{center}
\vspace{-0.5cm}
\caption{The ten smallest eigenvalues of the matrix $\H$ under the boundary conditions $\g g_1$ (blue disks) and $\g g_2$ (red squares). The spectral gap is $\alpha_2-\alpha_1\approx 0.11$ in the first case and $\alpha_2-\alpha_1\approx 1.2.10^{-3}$ in the second case.}
\label{fig:mu-spectral}
\end{figure}

\subsection{Reconstruction of the Lamé parameters from static measurements}\label{sec:recon-lame}

We consider here an elastic tensor $\g C$ of the form $\g C := \mu^\tex \bm{I} + \lambda^\tex I \otimes I $. The maps $\mu^\tex$ and $\lambda^\tex$ are defined as in Figure \ref{fig:lambda-mu-exact}, and we use the boundary conditions as in Figure \ref{fig:lambda-mu-BC}.

\begin{figure}
\begin{center}
        \def\xmin{-1}\def\xmax{1}\def\ymin{-1}\def\ymax{1}
        \begin{tikzpicture}[scale=\scale]
    		\begin{axis}[width=\width, height=\height, axis on top, scale only axis, xmin=\xmin, xmax=\xmax, ymin=\ymin, ymax=\ymax, xtick={\xmin,\pointsize*\xmax + (1-\pointsize)*\xmin,...,\xmax},ytick={\ymin,\pointsize*\ymax + (1-\pointsize)*\ymin,...,\ymax}, colormap/jet, colorbar,point meta min=3.5,point meta max=8.5]
    			\addplot graphics [xmin=\xmin,xmax=\xmax,ymin=\ymin,ymax=\ymax]{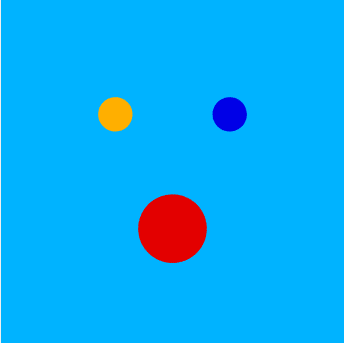};
    			\draw[thick] (-0.6, -0.6) -- (-0.6, 0.6) ; 
				\draw[thick] (-0.6, 0.6) -- (0.6, 0.6)   ;  
				\draw[thick] (0.6, 0.6) -- (0.6, -0.6)   ;         
				\draw[thick] (0.6, -0.6) -- (-0.6, -0.6) ;
    		\end{axis}
		\end{tikzpicture}
             \begin{tikzpicture}[scale=\scale]
    		\begin{axis}[width=\width, height=\height, axis on top, scale only axis, xmin=\xmin, xmax=\xmax, ymin=\ymin, ymax=\ymax, xtick={\xmin,\pointsize*\xmax + (1-\pointsize)*\xmin,...,\xmax},ytick={\ymin,\pointsize*\ymax + (1-\pointsize)*\ymin,...,\ymax}, colormap/jet, colorbar,point meta min=3.5,point meta max=8.5]
    			\addplot graphics [xmin=\xmin,xmax=\xmax,ymin=\ymin,ymax=\ymax]{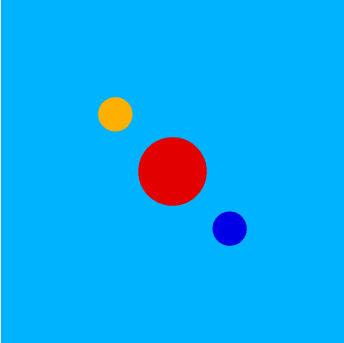};
    			\draw[thick] (-0.6, -0.6) -- (-0.6, 0.6) ; 
				\draw[thick] (-0.6, 0.6) -- (0.6, 0.6)   ;  
				\draw[thick] (0.6, 0.6) -- (0.6, -0.6)   ;         
				\draw[thick] (0.6, -0.6) -- (-0.6, -0.6) ;
    		\end{axis}
		\end{tikzpicture}
	\caption{The maps $\mu^\tex$ (left) and $\lambda^\tex$ (right) and the domain of interest $D$ in black. }
	 \label{fig:lambda-mu-exact}
\end{center}
\end{figure}

\begin{figure}
	\centering

		\begin{tikzpicture}[scale=1.7*\graphscale]
		\draw[ultra thick] (-1, -1) -- (-1, 1) [red]; 
		\draw[ultra thick] (-1, 1) -- (1, 1) [blue];    
		\draw[ultra thick] (1, 1) -- (1, -1) [red];         
		\draw[ultra thick] (1, -1) -- (-1, -1) [black!60!green];       
		
		\node[above] at (-1.3, 1) {{$\Gamma_2$}};
		
		\node[below] at (-0.5, -1) {{$\Gamma_1$}};
		
		
		
		\draw[->, thick] (-1.6, 0) -- (1.6, 0) node[right] {$x_1$};
		\draw[->, thick] (0, -1.6) -- (0, 1.6) node[above] {$x_2$};
		\node[below right] at (1, 0) {1};
		\node[below right] at (0, 1) {1};
		\node[below right] at (-1, 0) {-1};
		\node[below right] at (0, -1) {-1};
		\node at (1, 0) {\textbullet};
		\node at (-1, 0) {\textbullet};
		\node at (0, 1) {\textbullet};
		\node at (0, -1) {\textbullet};
		
		\node[right] at (0.3, 1.70) {$\bm{g} = \begin{pmatrix}1\\-0.5\end{pmatrix} $}; %
		\draw[->, thick] (0.4, 1.5) -- (1, 1.1) ;
		\draw[->, thick] (0, 1.5) -- (0.6, 1.1) ;
		\draw[->, thick] (-0.4, 1.5) -- (0.2, 1.1) ;
		\draw[->, thick] (-0.8, 1.5) -- (-0.2, 1.1) ;
		\draw[->, thick] (-1.2, 1.5) -- (-0.6, 1.1) ;

		
    		\draw[thick] (-0.6, -0.6) -- (-0.6, 0.6) ; 
			\draw[thick] (-0.6, 0.6) -- (0.6, 0.6)   ;  
			\draw[thick] (0.6, 0.6) -- (0.6, -0.6)   ;         
			\draw[thick] (0.6, -0.6) -- (-0.6, -0.6) ;
		\end{tikzpicture}
		\begin{tikzpicture}[scale=1.7*\graphscale]
		\draw[ultra thick] (-1, -1) -- (-1, 1) [black!60!green]; 
		\draw[ultra thick] (-1, 1) -- (1, 1) [red];    
		\draw[ultra thick] (1, 1) -- (1, -1) [blue];         
		\draw[ultra thick] (1, -1) -- (-1, -1) [red];       
		
		
		
		\node[right] at (1, 1.3) {{$\Gamma_2$}};
		
		\node[left] at (-1, 0.5) {{$\Gamma_1$}};
		
		\draw[->, thick] (-1.6, 0) -- (1.6, 0) node[right] {$x_1$};
		\draw[->, thick] (0, -1.6) -- (0, 1.6) node[above] {$x_2$};
		\node[below right] at (1, 0) {1};
		\node[below right] at (0, 1) {1};
		\node[below right] at (-1, 0) {-1};
		\node[below right] at (0, -1) {-1};
		\node at (1, 0) {\textbullet};
		\node at (-1, 0) {\textbullet};
		\node at (0, 1) {\textbullet};
		\node at (0, -1) {\textbullet};
		
		\node[right] at (1.65, 0.45) {$\bm{g}= \begin{pmatrix}-0.5\\ 1 \end{pmatrix}$}; 
		\draw[->, thick] (1.5, -1.2) -- (1.1, -0.6) ;
		\draw[->, thick] (1.5, -0.8) -- (1.1, -0.2) ;
		\draw[->, thick] (1.5, -0.4) -- (1.1, 0.2) ;
		\draw[->, thick] (1.5, 0.0) -- (1.1, 0.6) ;
		\draw[->, thick] (1.5, 0.4) -- (1.1, 1) ;
		
    		\draw[thick] (-0.6, -0.6) -- (-0.6, 0.6) ; 
			\draw[thick] (-0.6, 0.6) -- (0.6, 0.6)   ;  
			\draw[thick] (0.6, 0.6) -- (0.6, -0.6)   ;         
			\draw[thick] (0.6, -0.6) -- (-0.6, -0.6) ;
		\end{tikzpicture}
	
	\caption{The domain $\Omega$ and the two settings of boundary conditions used for the reconstruction of the Lamé parameters.}
	 \label{fig:lambda-mu-BC}
\end{figure}
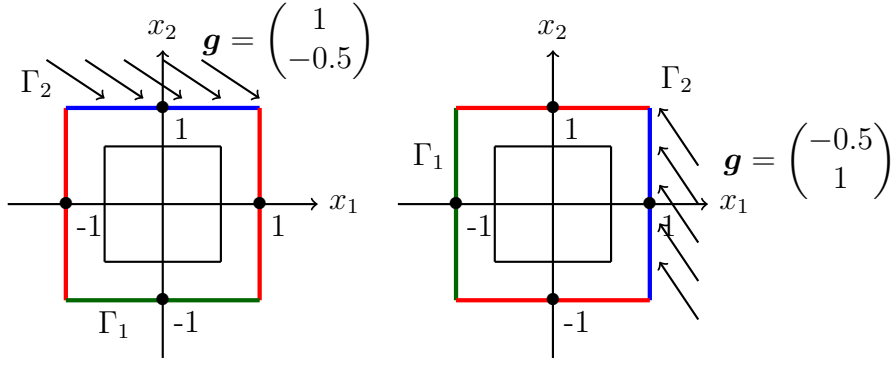

We plot the reconstruction of the Lamé parameters in Figure \ref{fig:lambda-mu-recon} using respectively $m=1$ or $m=2$ displacements fields. As expected, the inversion fails in the first case as the tensor $\g T_{\g u}$ cannot be left-invertible, while the inversion is stable in the second case. Additional data does not increase significantly the stability, as shown in Figure \ref{fig:lambda-mu-spectral} and Table \ref{tab:lambda-mu-error}.

\begin{figure}
\begin{center}
	\def\xmin{-0.6}\def\xmax{0.6}\def\ymin{-0.6}\def\ymax{0.6}\def\width{\mainwidth}\def\height{\mainheight}
	\begin{tikzpicture}[scale=\scale]
    \begin{axis}[width=\width, height=\height, axis on top, scale only axis, xmin=\xmin, xmax=\xmax, ymin=\ymin, ymax=\ymax, xtick={\xmin,\pointsize*\xmax + (1-\pointsize)*\xmin,...,\xmax},ytick={\ymin,\pointsize*\ymax + (1-\pointsize)*\ymin,...,\ymax}, colormap/jet, colorbar,point meta min=3.5,point meta max=8.5,ylabel={$\mu$}, ylabel style={rotate=270},title={Exact map}]
    \addplot graphics [xmin=\xmin,xmax=\xmax,ymin=\ymin,ymax=\ymax]{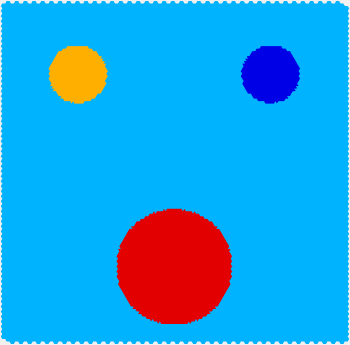};
    \end{axis}
\end{tikzpicture}
        \begin{tikzpicture}[scale=\scale]
    \begin{axis}[width=\width, height=\height, axis on top, scale only axis, xmin=\xmin, xmax=\xmax, ymin=\ymin, ymax=\ymax, xtick={\xmin,\pointsize*\xmax + (1-\pointsize)*\xmin,...,\xmax},ytick={\ymin,\pointsize*\ymax + (1-\pointsize)*\ymin,...,\ymax}, colormap/jet, colorbar,point meta min=3.5,point meta max=8.5]
    \addplot graphics [xmin=\xmin,xmax=\xmax,ymin=\ymin,ymax=\ymax]{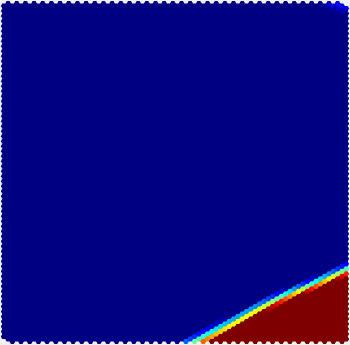};
    \end{axis}
\end{tikzpicture}
\begin{tikzpicture}[scale=\scale]
    \begin{axis}[width=\width, height=\height, axis on top, scale only axis, xmin=\xmin, xmax=\xmax, ymin=\ymin, ymax=\ymax, xtick={\xmin,\pointsize*\xmax + (1-\pointsize)*\xmin,...,\xmax},ytick={\ymin,\pointsize*\ymax + (1-\pointsize)*\ymin,...,\ymax}, colormap/jet, colorbar,point meta min=3.5,point meta max=8.5,title={Recontruction with $2$ data}]
    \addplot graphics [xmin=\xmin,xmax=\xmax,ymin=\ymin,ymax=\ymax]{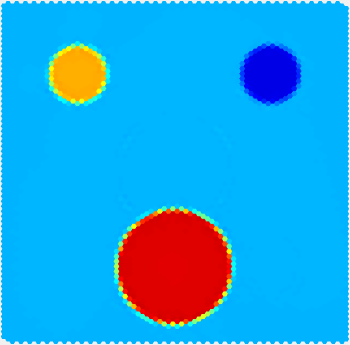};
    \end{axis}
\end{tikzpicture}

\begin{tikzpicture}[scale=\scale]
    \begin{axis}[width=\width, height=\height, axis on top, scale only axis, xmin=\xmin, xmax=\xmax, ymin=\ymin, ymax=\ymax, xtick={\xmin,\pointsize*\xmax + (1-\pointsize)*\xmin,...,\xmax},ytick={\ymin,\pointsize*\ymax + (1-\pointsize)*\ymin,...,\ymax}, colormap/jet, colorbar,point meta min=3.5,point meta max=8.5,ylabel={$\lambda$}, ylabel style={rotate=270},title={Exact map}]
    \addplot graphics [xmin=\xmin,xmax=\xmax,ymin=\ymin,ymax=\ymax]{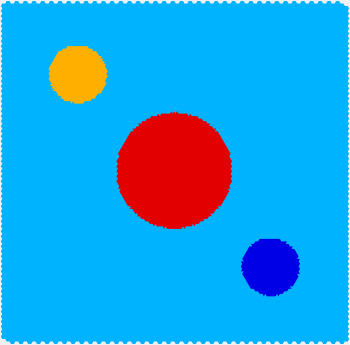};
    \end{axis}
\end{tikzpicture}
	        \begin{tikzpicture}[scale=\scale]
    \begin{axis}[width=\width, height=\height, axis on top, scale only axis, xmin=\xmin, xmax=\xmax, ymin=\ymin, ymax=\ymax, xtick={\xmin,\pointsize*\xmax + (1-\pointsize)*\xmin,...,\xmax},ytick={\ymin,\pointsize*\ymax + (1-\pointsize)*\ymin,...,\ymax}, colormap/jet, colorbar,point meta min=3.5,point meta max=8.5]
    \addplot graphics [xmin=\xmin,xmax=\xmax,ymin=\ymin,ymax=\ymax]{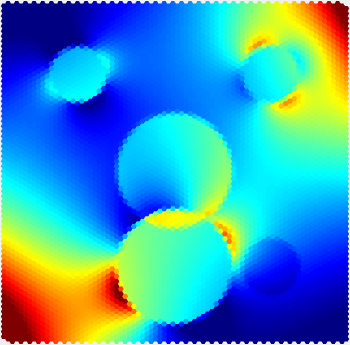};
    \end{axis}
\end{tikzpicture}
	        \begin{tikzpicture}[scale=\scale]
    \begin{axis}[width=\width, height=\height, axis on top, scale only axis, xmin=\xmin, xmax=\xmax, ymin=\ymin, ymax=\ymax, xtick={\xmin,\pointsize*\xmax + (1-\pointsize)*\xmin,...,\xmax},ytick={\ymin,\pointsize*\ymax + (1-\pointsize)*\ymin,...,\ymax}, colormap/jet, colorbar,point meta min=3.5,point meta max=8.5,title={Reconstruction from $2$ data}]
    \addplot graphics [xmin=\xmin,xmax=\xmax,ymin=\ymin,ymax=\ymax]{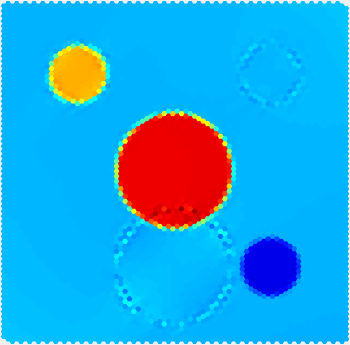};
    \end{axis}
\end{tikzpicture}
	\caption{Reconstruction of Lamé parameters from static measurements. From left to right: the exact maps $\mu^\tex$ and $\lambda^\tex$, reconstructions with $m=1$ and $m = 2$ data field. The mesh resolution is $h=0.01$.}
	\label{fig:lambda-mu-recon}
\end{center}	
\end{figure}

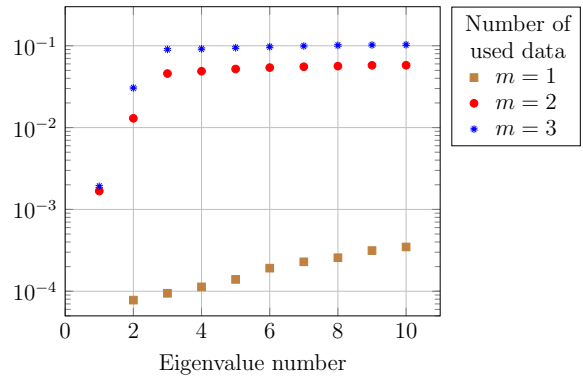
\begin{figure}
\begin{center}
\begin{tikzpicture}[scale=0.96*\graphscale]\begin{semilogyaxis}
   [xmin=0, xmax=11, ymin=0.00005, ymax=0.3, grid = major, 
   xlabel = Eigenvalue number,
   legend entries = {\hspace{-.3cm}Number of,\hspace{-.3cm}used data,$m=1$,$m=2$,$m=3$,}, legend pos=outer north east,]
   \addlegendimage{empty legend}
   \addlegendimage{empty legend}
\addplot[only marks,mark=square*,color=brown] table{
1    0
2    7.7926e-05
3    9.4437e-05
4    0.00011311
5    0.00013956
6    0.00019158
7    0.00022877
8    0.00025675
9    0.00031396
10    0.00034767
};\addplot[only marks,mark=*,color=red] table{
1    0.001676
2    0.01298
3    0.045751
4    0.048826
5    0.051991
6    0.054065
7    0.055418
8    0.056382
9    0.057655
10    0.057771
};\addplot[only marks,mark=10-pointed star,color=blue] table{
1    0.0019121
2    0.030492
3    0.09022
4    0.091645
5    0.094651
6    0.097144
7    0.099352
8    0.10119
9    0.10227
10    0.1029
};
\end{semilogyaxis}\end{tikzpicture}\end{center}
\vspace{-0.5cm}
	\caption{The ten smallest eigenvalues of the matrix $\H$ $(\alpha_1,\dots,\alpha_{10})$, computed from, one (brown squares), two (red dots) and three (blue stars) displacement fields. The spectral gaps is $\alpha_2-\alpha_1 \approx 7.8.10^{-5}$ in the first case, $\alpha_2-\alpha_1 \approx 1.1.10^{-2}$ in the second case and $\alpha_2-\alpha_1 \approx 2.9.10^{-2}$ in third case.}
	\label{fig:lambda-mu-spectral}
\end{figure}

\begin{figure} 
\centering
	\begin{tabular}{cccc}
\toprule
Number of used data & \multicolumn{2}{c}{Relative $L^2$-error (\%)}  \\ 
 & $\mu$ & $\lambda$   \\
\midrule
1 & 108.79 & 26.11  \\
2 & 2.89 & 2.98   \\
3 & 2.88 & 2.89  \\ 
\bottomrule
\end{tabular}

	\caption{The relative $L^2$-error between the exact maps and the reconstructed maps for the reconstruction of Lamé parameters from static measurements.}
	\label{tab:lambda-mu-error}
\end{figure}

\subsection{Reconstruction of the Lamé parameters from time-harmonic measurements}\label{sec:recon-lame-harmo}

We now propose to reconstruct the Lamé coefficient in the case of time harmonic measures. 
The direct model is now the time-harmonic wave equation:
\begin{equation}\label{eq:helmholtz}
\left\{\begin{aligned}
		-\div \left(\g C : \cE(\g u) \right) -	\omega^2\g u= \bm{0} &\quad \textrm{in }\Omega,\\
		\g u = \g 0 &\quad \textrm{on } \Gamma_1,\\
		\g u = \g g &\quad \textrm{on } \Gamma_2,\\
		\g C : \cE(\g u) \cdot \bm{\nu} = \g 0 &\quad \textrm{on } \partial\Omega\bs\Gamma.
\end{aligned}\right.
\end{equation} 
We chose the frequency $\omega$ in the set $\{10, 20, 30, 40, 50\}$. We present in Figure \ref{fig:lambda-mu-oscillations-recon} the reconstruction of the two Lamé parameters, using either one or two distinct frequencies. These reconstructions are performed by solving the linear system \eqref{eq:matrixH}. In Table \ref{tab:lambda-mu-oscillations-error}, we present the relative errors of the reconstruction using one to five numbers of frequencies. As expected, we see that at least two independent displacement fields are required to perform a stable inversion.

\begin{figure}
\begin{center}
	\def\xmin{-0.6}\def\xmax{0.6}\def\ymin{-0.6}\def\ymax{0.6}\def\width{\mainwidth}\def\height{\mainheight}
	\begin{tikzpicture}[scale=\scale]
    \begin{axis}[width=\width, height=\height, axis on top, scale only axis, xmin=\xmin, xmax=\xmax, ymin=\ymin, ymax=\ymax, xtick={\xmin,\pointsize*\xmax + (1-\pointsize)*\xmin,...,\xmax},ytick={\ymin,\pointsize*\ymax + (1-\pointsize)*\ymin,...,\ymax}, colormap/jet, colorbar,point meta min=3.5,point meta max=8.5,ylabel={$\mu$}, ylabel style={rotate=270},title={Exact map}]
    \addplot graphics [xmin=\xmin,xmax=\xmax,ymin=\ymin,ymax=\ymax]{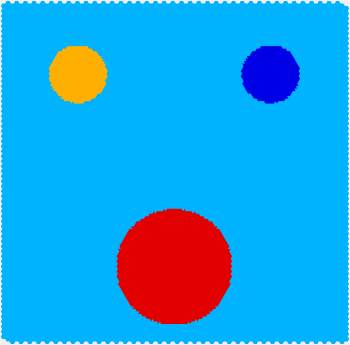};
    \end{axis}
\end{tikzpicture}
\begin{tikzpicture}[scale=\scale]
    \begin{axis}[width=\width, height=\height, axis on top, scale only axis, xmin=\xmin, xmax=\xmax, ymin=\ymin, ymax=\ymax, xtick={\xmin,\pointsize*\xmax + (1-\pointsize)*\xmin,...,\xmax},ytick={\ymin,\pointsize*\ymax + (1-\pointsize)*\ymin,...,\ymax}, colormap/jet, colorbar,point meta min=3.5,point meta max=8.5]
    \addplot graphics [xmin=\xmin,xmax=\xmax,ymin=\ymin,ymax=\ymax]{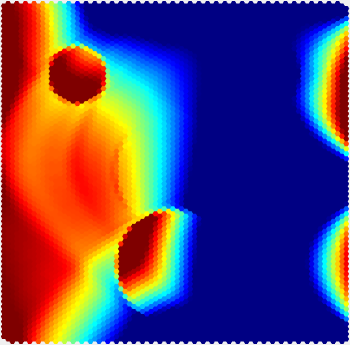};
    \end{axis}
\end{tikzpicture}
\begin{tikzpicture}[scale=\scale]
    \begin{axis}[width=\width, height=\height, axis on top, scale only axis, xmin=\xmin, xmax=\xmax, ymin=\ymin, ymax=\ymax, xtick={\xmin,\pointsize*\xmax + (1-\pointsize)*\xmin,...,\xmax},ytick={\ymin,\pointsize*\ymax + (1-\pointsize)*\ymin,...,\ymax}, colormap/jet, colorbar,point meta min=3.5,point meta max=8.5]
    \addplot graphics [xmin=\xmin,xmax=\xmax,ymin=\ymin,ymax=\ymax]{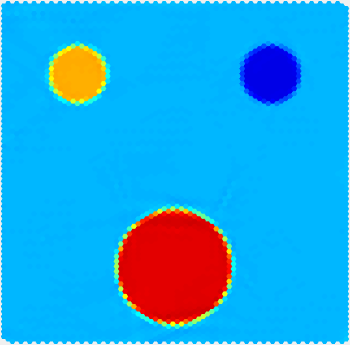};
    \end{axis}
\end{tikzpicture}
		
\begin{tikzpicture}[scale=\scale]
    \begin{axis}[width=\width, height=\height, axis on top, scale only axis, xmin=\xmin, xmax=\xmax, ymin=\ymin, ymax=\ymax, xtick={\xmin,\pointsize*\xmax + (1-\pointsize)*\xmin,...,\xmax},ytick={\ymin,\pointsize*\ymax + (1-\pointsize)*\ymin,...,\ymax}, colormap/jet, colorbar,point meta min=3.5,point meta max=8.5,ylabel={$\lambda$}, ylabel style={rotate=270}]
    \addplot graphics [xmin=\xmin,xmax=\xmax,ymin=\ymin,ymax=\ymax]{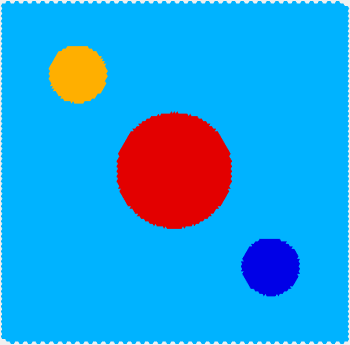};
    \end{axis}
\end{tikzpicture}
\begin{tikzpicture}[scale=\scale]
    \begin{axis}[width=\width, height=\height, axis on top, scale only axis, xmin=\xmin, xmax=\xmax, ymin=\ymin, ymax=\ymax, xtick={\xmin,\pointsize*\xmax + (1-\pointsize)*\xmin,...,\xmax},ytick={\ymin,\pointsize*\ymax + (1-\pointsize)*\ymin,...,\ymax}, colormap/jet, colorbar,point meta min=3.5,point meta max=8.5]
    \addplot graphics [xmin=\xmin,xmax=\xmax,ymin=\ymin,ymax=\ymax]{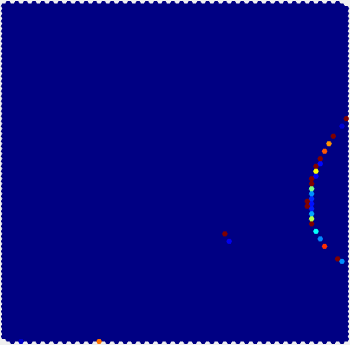};
    \end{axis}
\end{tikzpicture}
        \begin{tikzpicture}[scale=\scale]
    \begin{axis}[width=\width, height=\height, axis on top, scale only axis, xmin=\xmin, xmax=\xmax, ymin=\ymin, ymax=\ymax, xtick={\xmin,\pointsize*\xmax + (1-\pointsize)*\xmin,...,\xmax},ytick={\ymin,\pointsize*\ymax + (1-\pointsize)*\ymin,...,\ymax}, colormap/jet, colorbar,point meta min=3.5,point meta max=8.5]
    \addplot graphics [xmin=\xmin,xmax=\xmax,ymin=\ymin,ymax=\ymax]{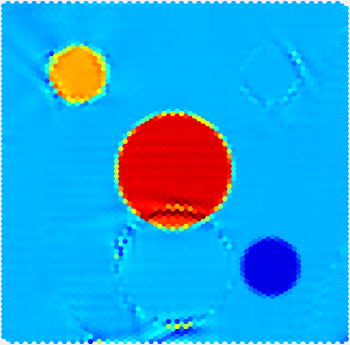};
    \end{axis}
\end{tikzpicture}

	\caption{Reconstruction of Lamé parameters from time-harmonic measurements. From left to right: the exact map $\mu^\tex$ and $\lambda^\tex$, their reconstructions using $1$ frequency and their reconstructions using $2$ frequencies. The mesh resolution  $h=0.01$ for the inversion.}
	\label{fig:lambda-mu-oscillations-recon}
	\end{center}
\end{figure}

\begin{figure}
\centering
	\begin{tabular}{cccc}
\toprule
Number of & \multicolumn{2}{c}{Relative $L^2$-error (\%)} \\ 
\cmidrule(lr){2-3}
used frequencies & $\mu$ & $\lambda$ \\
\midrule
1 & 52.47 & 136.66  \\
2 & 2.92 & 3.42  \\
3 & 2.91 & 2.98 \\ 
4 & 2.90 & 2.92 \\
5 & 2.91 & 3.05 \\
\bottomrule
\end{tabular}

	\caption{The relative $L^2$-error between the exact maps and the reconstructed maps for the reconstruction of Lamé parameters from time-harmonic measurements.}
	 \label{tab:lambda-mu-oscillations-error}
\end{figure}


\subsection{Reconstruction of the fully anisotropic elastic tensor from static measurements}\label{sec:recon-aniso}
We consider a fully anisotropic tensor model in dimension two:
\begin{equation}\nonumber
\g C:=\sum_{k=1}^6 \mu_k\g C_k,
\end{equation}
where the constant tensors $\bm{C}_1,...,\bm{C}_6$ are chosen as an orthogonal basis of the symmetric elastic tensors that reads, using Voigt notation, as
\begin{align*}
	& \g C_1 = \begin{pmatrix}
		\sqrt{2} & 0 & 0 \\
		0 & 0 & 0 \\
		0 & 0 & 0
	\end{pmatrix}, \quad \g C_2 = \begin{pmatrix}
		0 & 0 & 0 \\
		0 & 0 & 0 \\
		0 & 0 & \sqrt{2}
	\end{pmatrix}, \quad \g C_3 = \begin{pmatrix}
		0 & 0 & 0 \\
		0 & \sqrt{2} & 0 \\
		0 & 0 & 0
	\end{pmatrix},\\
	 & \g C_4 = \begin{pmatrix}
		0 & 0 & 1 \\
		0 & 0 & 0 \\
		1 & 0 & 0
	\end{pmatrix}, \quad \g C_5 = \begin{pmatrix}
		0 & 1 & 0 \\
		1 & 0 & 0 \\
		0 & 0 & 0
	\end{pmatrix}, \quad \g C_6 = \begin{pmatrix}
		0 & 0 & 0 \\
		0 & 0 & 1 \\
		0 & 1 & 0
	\end{pmatrix}.
\end{align*}
\par In Figure \ref{fig:aniso-recon}, we plot the reconstruction of the six parameter maps $\mu_1,...,\mu_6$, directly obtained by solving the eigenvalue problem \eqref{eq:eig2}, using $m=3$, $m=4$ and $m=6$ data fields respectively. We observe that from the use of $m \ge 4$ data fields, the reconstruction of the six parameter maps is stable, while it fails for $m < 4$. This illustrates the ability of our approach to recover fully anisotropic tensors from a direct discretization of the Reverse Weak Formulation \eqref{eq:RWF} or \eqref{eq:RWFcompact} from a rather low number of data fields. To our knowledge, this is the first method that allows such reconstruction from elasto-static measurements.

\begin{remark}
In practice, we do not check the validity of the hypotheses of Proposition \ref{prop:expanded} nor Corollary \ref{cor:stability-static} before the discretization. Instead, we compute the matrix $\mathbb{H}$ of the discretized system and check the stability of the inversion, looking at the first eigenvalues of this matrix.
\end{remark}

\begin{remark}
Surprisingly, the stable reconstruction is possible for $m = 4$, while we expected that it would only be possible for $m\ge 6$. As mentioned in Remark \ref{rq:number_parameters}, the theoretical stability analysis requires the same number of data than the number of maps to be recover.
\end{remark}
In Figure \ref{fig:aniso-spectral}, we plot the first eigenvalue of the matrix $\H$ for different number of used data fields and in Table \ref{tab:aniso-error}, we present the relative $L^2$-errors for the whole tensor recovery.

\begin{figure}
\begin{center}
\def\scale{\textscale*0.11}
	\def\xmin{-0.6}\def\xmax{0.6}\def\ymin{-0.6}\def\ymax{0.6}\def\pointsize{0.5}\def\width{0.53*\mainwidth}\def\height{0.53*\mainheight}

        \begin{tikzpicture}[scale=\scale]
    \begin{axis}[width=\width, height=\height, axis on top, scale only axis, xmin=\xmin, xmax=\xmax, ymin=\ymin, ymax=\ymax, xtick={\xmin,\pointsize*\xmax + (1-\pointsize)*\xmin,...,\xmax},ytick={\ymin,\pointsize*\ymax + (1-\pointsize)*\ymin,...,\ymax}, colormap/jet, colorbar,point meta min=12.5,point meta max=19.5,colorbar style={xshift=-7pt,width=9pt},title={$\mu_1$}]
    \addplot graphics [xmin=\xmin,xmax=\xmax,ymin=\ymin,ymax=\ymax]{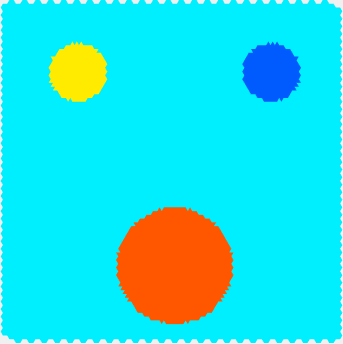};
    \end{axis}
\end{tikzpicture}
        \begin{tikzpicture}[scale=\scale]
    \begin{axis}[width=\width, height=\height, axis on top, scale only axis, xmin=\xmin, xmax=\xmax, ymin=\ymin, ymax=\ymax, xtick={\xmin,\pointsize*\xmax + (1-\pointsize)*\xmin,...,\xmax},ytick={\ymin,\pointsize*\ymax + (1-\pointsize)*\ymin,...,\ymax}, colormap/jet, colorbar,point meta min=12.5,point meta max=19.5,colorbar style={xshift=-7pt,width=9pt},title={$\mu_2$}]
    \addplot graphics [xmin=\xmin,xmax=\xmax,ymin=\ymin,ymax=\ymax]{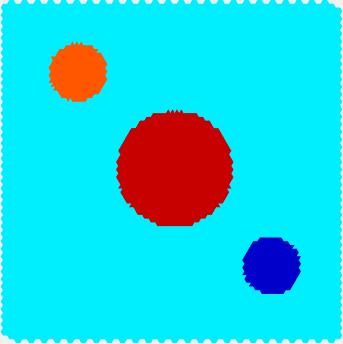};
    \end{axis}
\end{tikzpicture}
        \begin{tikzpicture}[scale=\scale]
    \begin{axis}[width=\width, height=\height, axis on top, scale only axis, xmin=\xmin, xmax=\xmax, ymin=\ymin, ymax=\ymax, xtick={\xmin,\pointsize*\xmax + (1-\pointsize)*\xmin,...,\xmax},ytick={\ymin,\pointsize*\ymax + (1-\pointsize)*\ymin,...,\ymax}, colormap/jet, colorbar,point meta min=12.5,point meta max=19.5,colorbar style={xshift=-7pt,width=9pt},title={$\mu_3$}]
    \addplot graphics [xmin=\xmin,xmax=\xmax,ymin=\ymin,ymax=\ymax]{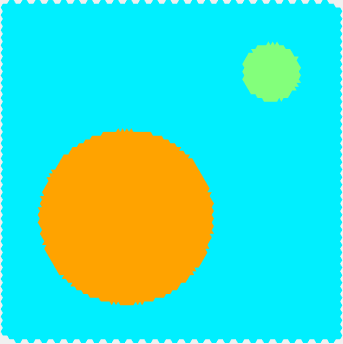};
    \end{axis}
\end{tikzpicture}
        \begin{tikzpicture}[scale=\scale]
    \begin{axis}[width=\width, height=\height, axis on top, scale only axis, xmin=\xmin, xmax=\xmax, ymin=\ymin, ymax=\ymax, xtick={\xmin,\pointsize*\xmax + (1-\pointsize)*\xmin,...,\xmax},ytick={\ymin,\pointsize*\ymax + (1-\pointsize)*\ymin,...,\ymax}, colormap/jet, colorbar,point meta min=0,point meta max=4.5,colorbar style={xshift=-7pt,width=9pt},title={$\mu_4$}]
    \addplot graphics [xmin=\xmin,xmax=\xmax,ymin=\ymin,ymax=\ymax]{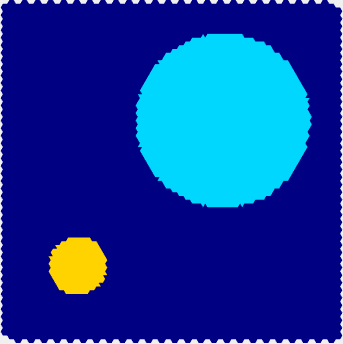};
    \end{axis}
\end{tikzpicture}
        \begin{tikzpicture}[scale=\scale]
    \begin{axis}[width=\width, height=\height, axis on top, scale only axis, xmin=\xmin, xmax=\xmax, ymin=\ymin, ymax=\ymax, xtick={\xmin,\pointsize*\xmax + (1-\pointsize)*\xmin,...,\xmax},ytick={\ymin,\pointsize*\ymax + (1-\pointsize)*\ymin,...,\ymax}, colormap/jet, colorbar,point meta min=0,point meta max=4.5,colorbar style={xshift=-7pt,width=9pt},title={$\mu_5$}]
    \addplot graphics [xmin=\xmin,xmax=\xmax,ymin=\ymin,ymax=\ymax]{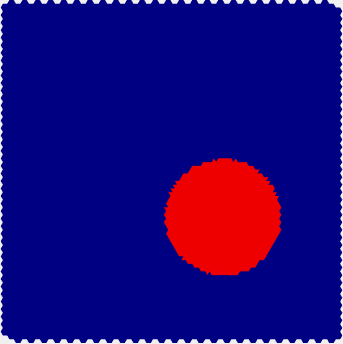};
    \end{axis}
\end{tikzpicture}
        \begin{tikzpicture}[scale=\scale]
    \begin{axis}[width=\width, height=\height, axis on top, scale only axis, xmin=\xmin, xmax=\xmax, ymin=\ymin, ymax=\ymax, xtick={\xmin,\pointsize*\xmax + (1-\pointsize)*\xmin,...,\xmax},ytick={\ymin,\pointsize*\ymax + (1-\pointsize)*\ymin,...,\ymax}, colormap/jet, colorbar,point meta min=0,point meta max=4.5,colorbar style={xshift=-7pt,width=9pt},title={$\mu_6$}]
    \addplot graphics [xmin=\xmin,xmax=\xmax,ymin=\ymin,ymax=\ymax]{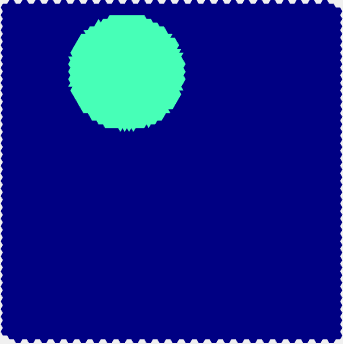};
    \end{axis}
\end{tikzpicture}

 \begin{tikzpicture}[scale=\scale]
    \begin{axis}[width=\width, height=\height, axis on top, scale only axis, xmin=\xmin, xmax=\xmax, ymin=\ymin, ymax=\ymax, xtick={\xmin,\pointsize*\xmax + (1-\pointsize)*\xmin,...,\xmax},ytick={\ymin,\pointsize*\ymax + (1-\pointsize)*\ymin,...,\ymax}, colormap/jet, colorbar,point meta min=12.5,point meta max=19.5,colorbar style={xshift=-7pt,width=9pt},title={$\mu_1$}]
    \addplot graphics [xmin=\xmin,xmax=\xmax,ymin=\ymin,ymax=\ymax]{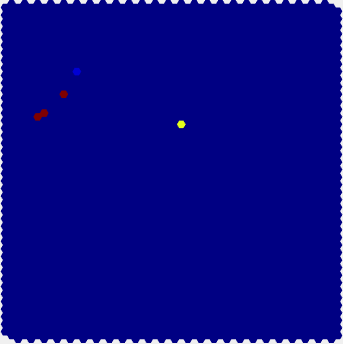};
    \end{axis}
\end{tikzpicture}
        \begin{tikzpicture}[scale=\scale]
    \begin{axis}[width=\width, height=\height, axis on top, scale only axis, xmin=\xmin, xmax=\xmax, ymin=\ymin, ymax=\ymax, xtick={\xmin,\pointsize*\xmax + (1-\pointsize)*\xmin,...,\xmax},ytick={\ymin,\pointsize*\ymax + (1-\pointsize)*\ymin,...,\ymax}, colormap/jet, colorbar,point meta min=12.5,point meta max=19.5,colorbar style={xshift=-7pt,width=9pt},title={$\mu_2$}]
    \addplot graphics [xmin=\xmin,xmax=\xmax,ymin=\ymin,ymax=\ymax]{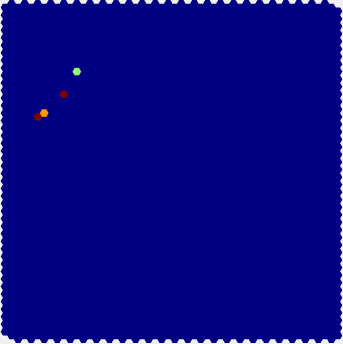};
    \end{axis}
\end{tikzpicture}
        \begin{tikzpicture}[scale=\scale]
    \begin{axis}[width=\width, height=\height, axis on top, scale only axis, xmin=\xmin, xmax=\xmax, ymin=\ymin, ymax=\ymax, xtick={\xmin,\pointsize*\xmax + (1-\pointsize)*\xmin,...,\xmax},ytick={\ymin,\pointsize*\ymax + (1-\pointsize)*\ymin,...,\ymax}, colormap/jet, colorbar,point meta min=12.5,point meta max=19.5,colorbar style={xshift=-7pt,width=9pt},title={$\mu_3$}]
    \addplot graphics [xmin=\xmin,xmax=\xmax,ymin=\ymin,ymax=\ymax]{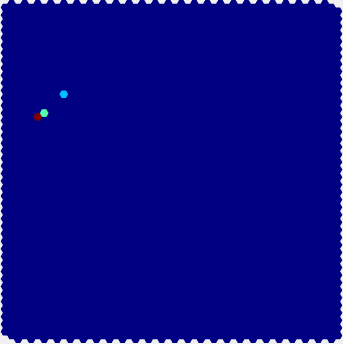};
    \end{axis}
\end{tikzpicture}
        \begin{tikzpicture}[scale=\scale]
    \begin{axis}[width=\width, height=\height, axis on top, scale only axis, xmin=\xmin, xmax=\xmax, ymin=\ymin, ymax=\ymax, xtick={\xmin,\pointsize*\xmax + (1-\pointsize)*\xmin,...,\xmax},ytick={\ymin,\pointsize*\ymax + (1-\pointsize)*\ymin,...,\ymax}, colormap/jet, colorbar,point meta min=0,point meta max=4.5,colorbar style={xshift=-7pt,width=9pt},title={$\mu_4$}]
    \addplot graphics [xmin=\xmin,xmax=\xmax,ymin=\ymin,ymax=\ymax]{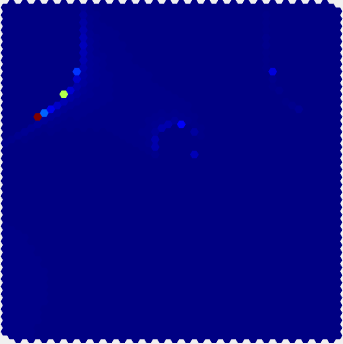};
    \end{axis}
\end{tikzpicture}
        \begin{tikzpicture}[scale=\scale]
    \begin{axis}[width=\width, height=\height, axis on top, scale only axis, xmin=\xmin, xmax=\xmax, ymin=\ymin, ymax=\ymax, xtick={\xmin,\pointsize*\xmax + (1-\pointsize)*\xmin,...,\xmax},ytick={\ymin,\pointsize*\ymax + (1-\pointsize)*\ymin,...,\ymax}, colormap/jet, colorbar,point meta min=0,point meta max=4.5,colorbar style={xshift=-7pt,width=9pt},title={$\mu_5$}]
    \addplot graphics [xmin=\xmin,xmax=\xmax,ymin=\ymin,ymax=\ymax]{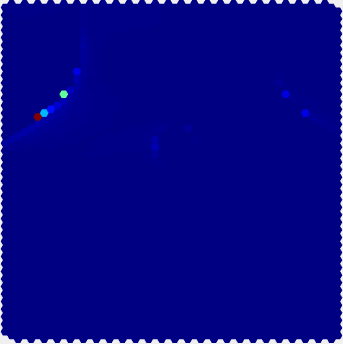};
    \end{axis}
\end{tikzpicture}
        \begin{tikzpicture}[scale=\scale]
    \begin{axis}[width=\width, height=\height, axis on top, scale only axis, xmin=\xmin, xmax=\xmax, ymin=\ymin, ymax=\ymax, xtick={\xmin,\pointsize*\xmax + (1-\pointsize)*\xmin,...,\xmax},ytick={\ymin,\pointsize*\ymax + (1-\pointsize)*\ymin,...,\ymax}, colormap/jet, colorbar,point meta min=0,point meta max=4.5,colorbar style={xshift=-7pt,width=9pt},title={$\mu_6$}]
    \addplot graphics [xmin=\xmin,xmax=\xmax,ymin=\ymin,ymax=\ymax]{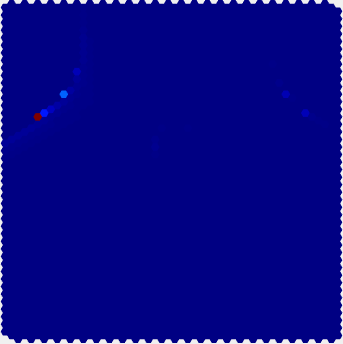};
    \end{axis}
\end{tikzpicture}

        \begin{tikzpicture}[scale=\scale]
    \begin{axis}[width=\width, height=\height, axis on top, scale only axis, xmin=\xmin, xmax=\xmax, ymin=\ymin, ymax=\ymax, xtick={\xmin,\pointsize*\xmax + (1-\pointsize)*\xmin,...,\xmax},ytick={\ymin,\pointsize*\ymax + (1-\pointsize)*\ymin,...,\ymax}, colormap/jet, colorbar,point meta min=12.5,point meta max=19.5,colorbar style={xshift=-7pt,width=9pt},title={$\mu_1$}]
    \addplot graphics [xmin=\xmin,xmax=\xmax,ymin=\ymin,ymax=\ymax]{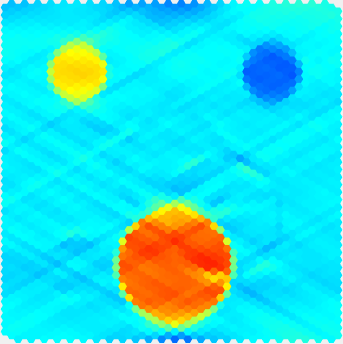};
    \end{axis}
\end{tikzpicture}
        \begin{tikzpicture}[scale=\scale]
    \begin{axis}[width=\width, height=\height, axis on top, scale only axis, xmin=\xmin, xmax=\xmax, ymin=\ymin, ymax=\ymax, xtick={\xmin,\pointsize*\xmax + (1-\pointsize)*\xmin,...,\xmax},ytick={\ymin,\pointsize*\ymax + (1-\pointsize)*\ymin,...,\ymax}, colormap/jet, colorbar,point meta min=12.5,point meta max=19.5,colorbar style={xshift=-7pt,width=9pt},title={$\mu_2$}]
    \addplot graphics [xmin=\xmin,xmax=\xmax,ymin=\ymin,ymax=\ymax]{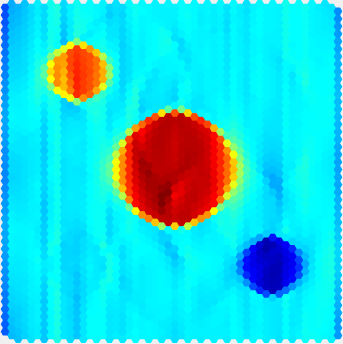};
    \end{axis}
\end{tikzpicture}
        \begin{tikzpicture}[scale=\scale]
    \begin{axis}[width=\width, height=\height, axis on top, scale only axis, xmin=\xmin, xmax=\xmax, ymin=\ymin, ymax=\ymax, xtick={\xmin,\pointsize*\xmax + (1-\pointsize)*\xmin,...,\xmax},ytick={\ymin,\pointsize*\ymax + (1-\pointsize)*\ymin,...,\ymax}, colormap/jet, colorbar,point meta min=12.5,point meta max=19.5,colorbar style={xshift=-7pt,width=9pt},title={$\mu_3$}]
    \addplot graphics [xmin=\xmin,xmax=\xmax,ymin=\ymin,ymax=\ymax]{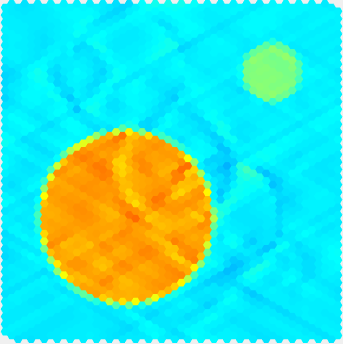};
    \end{axis}
\end{tikzpicture}
        \begin{tikzpicture}[scale=\scale]
    \begin{axis}[width=\width, height=\height, axis on top, scale only axis, xmin=\xmin, xmax=\xmax, ymin=\ymin, ymax=\ymax, xtick={\xmin,\pointsize*\xmax + (1-\pointsize)*\xmin,...,\xmax},ytick={\ymin,\pointsize*\ymax + (1-\pointsize)*\ymin,...,\ymax}, colormap/jet, colorbar,point meta min=0,point meta max=4.5,colorbar style={xshift=-7pt,width=9pt},title={$\mu_4$}]
    \addplot graphics [xmin=\xmin,xmax=\xmax,ymin=\ymin,ymax=\ymax]{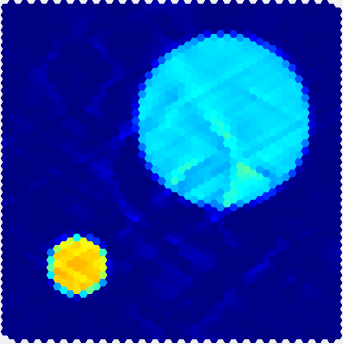};
    \end{axis}
\end{tikzpicture}
        \begin{tikzpicture}[scale=\scale]
    \begin{axis}[width=\width, height=\height, axis on top, scale only axis, xmin=\xmin, xmax=\xmax, ymin=\ymin, ymax=\ymax, xtick={\xmin,\pointsize*\xmax + (1-\pointsize)*\xmin,...,\xmax},ytick={\ymin,\pointsize*\ymax + (1-\pointsize)*\ymin,...,\ymax}, colormap/jet, colorbar,point meta min=0,point meta max=4.5,colorbar style={xshift=-7pt,width=9pt},title={$\mu_5$}]
    \addplot graphics [xmin=\xmin,xmax=\xmax,ymin=\ymin,ymax=\ymax]{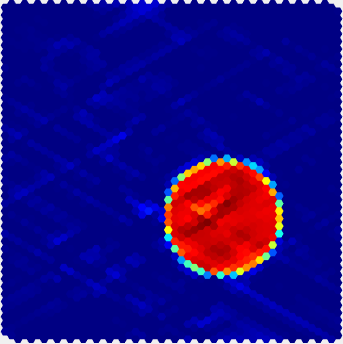};
    \end{axis}
\end{tikzpicture}
        \begin{tikzpicture}[scale=\scale]
    \begin{axis}[width=\width, height=\height, axis on top, scale only axis, xmin=\xmin, xmax=\xmax, ymin=\ymin, ymax=\ymax, xtick={\xmin,\pointsize*\xmax + (1-\pointsize)*\xmin,...,\xmax},ytick={\ymin,\pointsize*\ymax + (1-\pointsize)*\ymin,...,\ymax}, colormap/jet, colorbar,point meta min=0,point meta max=4.5,colorbar style={xshift=-7pt,width=9pt},title={$\mu_6$}]
    \addplot graphics [xmin=\xmin,xmax=\xmax,ymin=\ymin,ymax=\ymax]{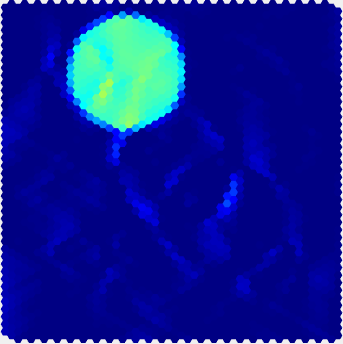};
    \end{axis}
\end{tikzpicture}

        \begin{tikzpicture}[scale=\scale]
    \begin{axis}[width=\width, height=\height, axis on top, scale only axis, xmin=\xmin, xmax=\xmax, ymin=\ymin, ymax=\ymax, xtick={\xmin,\pointsize*\xmax + (1-\pointsize)*\xmin,...,\xmax},ytick={\ymin,\pointsize*\ymax + (1-\pointsize)*\ymin,...,\ymax}, colormap/jet, colorbar,point meta min=12.5,point meta max=19.5,colorbar style={xshift=-7pt,width=9pt},title={$\mu_1$}]
    \addplot graphics [xmin=\xmin,xmax=\xmax,ymin=\ymin,ymax=\ymax]{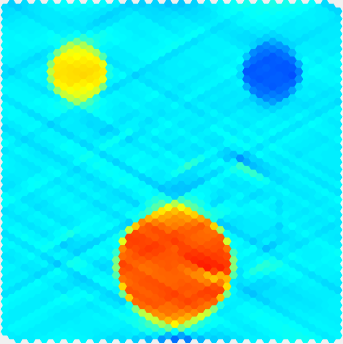};
    \end{axis}
\end{tikzpicture}
        \begin{tikzpicture}[scale=\scale]
    \begin{axis}[width=\width, height=\height, axis on top, scale only axis, xmin=\xmin, xmax=\xmax, ymin=\ymin, ymax=\ymax, xtick={\xmin,\pointsize*\xmax + (1-\pointsize)*\xmin,...,\xmax},ytick={\ymin,\pointsize*\ymax + (1-\pointsize)*\ymin,...,\ymax}, colormap/jet, colorbar,point meta min=12.5,point meta max=19.5,colorbar style={xshift=-7pt,width=9pt},title={$\mu_2$}]
    \addplot graphics [xmin=\xmin,xmax=\xmax,ymin=\ymin,ymax=\ymax]{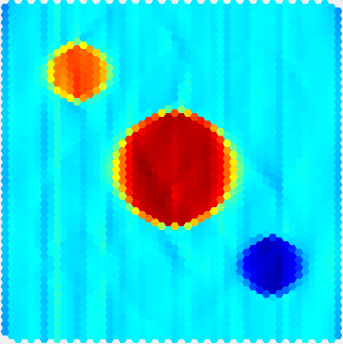};
    \end{axis}
\end{tikzpicture}
        \begin{tikzpicture}[scale=\scale]
    \begin{axis}[width=\width, height=\height, axis on top, scale only axis, xmin=\xmin, xmax=\xmax, ymin=\ymin, ymax=\ymax, xtick={\xmin,\pointsize*\xmax + (1-\pointsize)*\xmin,...,\xmax},ytick={\ymin,\pointsize*\ymax + (1-\pointsize)*\ymin,...,\ymax}, colormap/jet, colorbar,point meta min=12.5,point meta max=19.5,colorbar style={xshift=-7pt,width=9pt},title={$\mu_3$}]
    \addplot graphics [xmin=\xmin,xmax=\xmax,ymin=\ymin,ymax=\ymax]{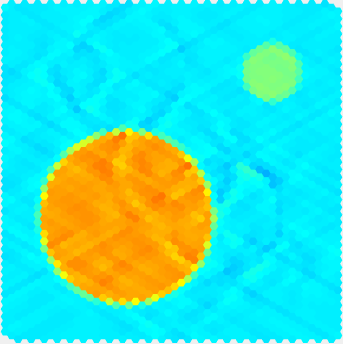};
    \end{axis}
\end{tikzpicture}
        \begin{tikzpicture}[scale=\scale]
    \begin{axis}[width=\width, height=\height, axis on top, scale only axis, xmin=\xmin, xmax=\xmax, ymin=\ymin, ymax=\ymax, xtick={\xmin,\pointsize*\xmax + (1-\pointsize)*\xmin,...,\xmax},ytick={\ymin,\pointsize*\ymax + (1-\pointsize)*\ymin,...,\ymax}, colormap/jet, colorbar,point meta min=0,point meta max=4.5,colorbar style={xshift=-7pt,width=9pt},title={$\mu_4$}]
    \addplot graphics [xmin=\xmin,xmax=\xmax,ymin=\ymin,ymax=\ymax]{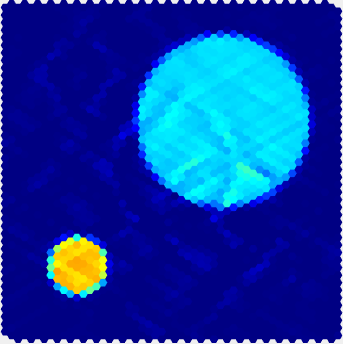};
    \end{axis}
\end{tikzpicture}
        \begin{tikzpicture}[scale=\scale]
    \begin{axis}[width=\width, height=\height, axis on top, scale only axis, xmin=\xmin, xmax=\xmax, ymin=\ymin, ymax=\ymax, xtick={\xmin,\pointsize*\xmax + (1-\pointsize)*\xmin,...,\xmax},ytick={\ymin,\pointsize*\ymax + (1-\pointsize)*\ymin,...,\ymax}, colormap/jet, colorbar,point meta min=0,point meta max=4.5,colorbar style={xshift=-7pt,width=9pt},title={$\mu_5$}]
    \addplot graphics [xmin=\xmin,xmax=\xmax,ymin=\ymin,ymax=\ymax]{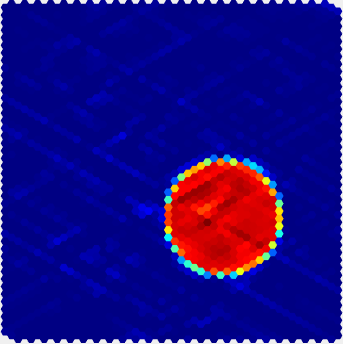};
    \end{axis}
\end{tikzpicture}
        \begin{tikzpicture}[scale=\scale]
    \begin{axis}[width=\width, height=\height, axis on top, scale only axis, xmin=\xmin, xmax=\xmax, ymin=\ymin, ymax=\ymax, xtick={\xmin,\pointsize*\xmax + (1-\pointsize)*\xmin,...,\xmax},ytick={\ymin,\pointsize*\ymax + (1-\pointsize)*\ymin,...,\ymax}, colormap/jet, colorbar,point meta min=0,point meta max=4.5,colorbar style={xshift=-7pt,width=9pt},title={$\mu_6$}]
    \addplot graphics [xmin=\xmin,xmax=\xmax,ymin=\ymin,ymax=\ymax]{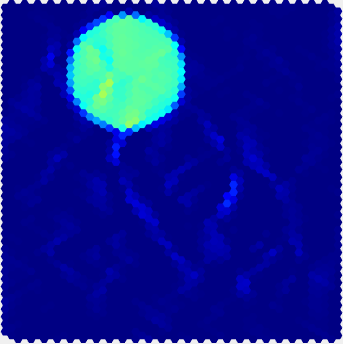};
    \end{axis}
\end{tikzpicture}

	\caption{Reconstruction of a fully anisotropic tensor from static measurements. From top to bottom: the exact parameters maps, and their reconstructions using respectively $m=3$, $4$ and $6$ data fields.The mesh resolution  $h=0.015$ for the inversion.}
	\label{fig:aniso-recon}
\end{center}
\end{figure}

\begin{figure}\begin{center}\begin{tikzpicture}[scale=\graphscale]\begin{semilogyaxis}
   [xmin=0, xmax=11, ymin=0.000005, ymax=0.006, grid = major, 
   xlabel = Eigenvalue number,
   legend entries = {\hspace{-.3cm}Number of,\hspace{-.3cm} used data, $m=3$, $m=4$, $m=5$, $m=6$,}, legend pos=outer north east, 
   ]
   \addlegendimage{empty legend}
   \addlegendimage{empty legend}
\addplot+[only marks] table{
1    6.0085e-06
2    2.979e-05
3    7.0848e-05
4    0.00010038
5    0.0001196
6    0.00025388
7    0.00026867
8    0.00028923
9    0.00031597
10    0.00033965
};\addplot+[only marks] table{
1    0.0005084
2    0.0015578
3    0.0022136
4    0.0022781
5    0.0024828
6    0.0025386
7    0.0026844
8    0.003033
9    0.003409
10    0.0035802
};\addplot+[only marks] table{
1    0.00064372
2    0.0027864
3    0.0035877
4    0.0038523
5    0.0039112
6    0.0039291
7    0.0042529
8    0.0042634
9    0.004274
10    0.0043444
};\addplot+[only marks] table{
1    0.00077064
2    0.0036467
3    0.0040266
4    0.0041242
5    0.0041436
6    0.0042933
7    0.0045211
8    0.0047947
9    0.0048607
10    0.0049189
};\end{semilogyaxis}\end{tikzpicture}\end{center}
\vspace{-0.5cm}
	\caption{The ten smallest eigenvalues of the matrix $\H$ $(\alpha_1,\dots,\alpha_{10})$ when having access to three (blue discs), four (red squares), five (brown discs) and six (black stars) independent displacements. The spectral gap is $\alpha_2-\alpha_1 \approx 2.4.10^{-5}$ in the first case, $\alpha_2-\alpha_1 \approx 1.1.10^{-3}$ in the second case, $\alpha_2-\alpha_1 \approx 2.1.10^{-3}$ in the third case and $\alpha_2-\alpha_1 \approx 2.9.10^{-3}$ in the fourth case.}
	\label{fig:aniso-spectral}
\end{figure}
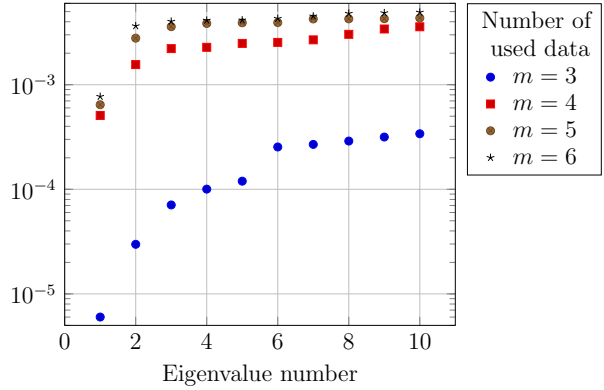

\begin{figure}
\centering
	\begin{tabular}{cc}
\toprule
Number of used data & Relative $L^2$-error (\%)   \\ 
\midrule
		 1 & 138.28  \\
		 2 & 140.31  \\
		 3 & 140.20  \\
		 4 & 2.29     \\
		 5 & 2.17      \\
		 6 & 2.14       \\
		 7 & 2.11     \\
		 8 & 2.11     \\
\bottomrule
\end{tabular}

	\caption{The relative $L^2$-error between the exact maps in the cas of the full anisotropic tensor recovery from various numbers of accessible data. The Discretization parameter is $h=0.015$ for the recovery.}
	\label{tab:aniso-error}
\end{figure}

\section{Concluding remarks}
In this work, we show that the Reverse Weak Formulation \eqref{eq:RWF} allows to see the multi-parameter inverse problem as a single linear inversion or a single eigenvalue problem. Under rather non-restrictive hypotheses, the involved linear operator has closed range and has a "small" null-space. This suggests that a suitable discretization of this operator produces stable numerical reconstructions of the parameter maps. Indeed, as observed in Section \ref{sec:num}, this works indeed for either isotropic or anisotropic tensors from both static and dynamic data. As the stability is preserved in the discrete problem, we do not use any regularization when performing the inversion. This approach provides a powerful alternative to classical methods for solving this kind of inverse problems that consist of minimizing a discrepancy functional, which could be very slow.

To characterize the null space, it has been necessary to introduce a very interesting extension of the notion of conservative tensor fields to third-order tensor field. This idea could have consequences on the theory of systems of first order PDEs.

\section*{Acknowledgement}
The authors acknowledge support from the French National Research Agency (ANR) under
grants ANR-22-CE40-0005 (project REWARD). Part of this work was also supported by the LABEX MILYON (ANR-10-LABX-0070) of Université de Lyon, within the program "Investissements d'Avenir" (ANR-11-IDEX- 0007) operated by the French National Research Agency (ANR).

\appendix
\makeatletter
\renewcommand{\thelemma}{\@Alph\c@section.\arabic{lemma}}
\renewcommand{\thetheorem}{\@Alph\c@section.\arabic{theorem}}
\renewcommand{\theproposition}{\@Alph\c@section.\arabic{proposition}}
\renewcommand{\thecorollary}{\@Alph\c@section.\arabic{corollary}}
\renewcommand{\theremark}{\@Alph\c@section.\arabic{remark}}
\renewcommand{\thedefinition}{\@Alph\c@section.\arabic{definition}}
\makeatother

\section{Technical results}

\subsection{Some products in Sobolev spaces}\label{sec:appendixA}

\begin{lemma}\label{prop:products} Let $\Omega$ be a open, bounded subset of $\R^d$. Let $u\in W^{1,p}(\Omega)$ with $p>d$, $\ph\in H^1_0(\Omega)$ and  $v\in H^{-1}(\Omega)$. Then,

\begin{enumerate}

\item The product $u\, \ph$ belongs to $H^1_0(\Omega)$ and we have $\norm{u\, \ph}{H^1_0(\Omega)}\leq c \norm{u}{W^{1,p}(\Omega)}\norm{\ph}{ H^1_0(\Omega)}$ where $c>0$ depends only on $p,d$ and $\Omega$.

\item  The product $u\, v$ belongs to$ H^{-1}(\Omega)$ and we have $\norm{u\, v}{H^{-1}(\Omega)} \leq c\norm{u}{W^{1,p}(\Omega)}\norm{v}{H^{-1}(\Omega)}$ where $c>0$ depends only on $p,d$ and $\Omega$. 

\end{enumerate}

\end{lemma}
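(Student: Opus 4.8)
The plan is to establish the first statement directly and then obtain the second by duality against $H^1_0$. For part 1, the structural fact that makes everything work is that $p>d$ forces $u\in L^\infty(\Omega)$ via the Morrey/Sobolev embedding $W^{1,p}(\Omega)\hookrightarrow \cC^0(\overline\Omega)$, so that $\|u\|_{L^\infty(\Omega)}\le c\|u\|_{W^{1,p}(\Omega)}$. Granting this, I would first note $u\,\ph\in L^2(\Omega)$ (immediate from $u\in L^\infty$ and $\ph\in L^2$), then compute the distributional gradient $\nabla(u\,\ph)=\ph\,\nabla u+u\,\nabla\ph$ and bound it in $L^2$ term by term. The term $u\,\nabla\ph$ is controlled by $\|u\|_{L^\infty}\|\nabla\ph\|_{L^2}$. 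For $\ph\,\nabla u$ I would apply Hölder with $1/2=1/p+1/r$, giving $\|\ph\,\nabla u\|_{L^2}\le\|\ph\|_{L^r}\|\nabla u\|_{L^p}$ where $1/r=1/2-1/p$.

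The exponent check is where $p>d$ enters a second time: since $p>d\ge 2$ one has $1/r=1/2-1/p>1/2-1/d$, i.e. $r$ is strictly below the critical exponent $2^*=2d/(d-2)$ (and $r<\infty$ when $d=2$), so the embedding $H^1_0(\Omega)\hookrightarrow L^r(\Omega)$ holds with a constant $C_E=C_E(p,d,\Omega)$ and yields $\|\ph\|_{L^r}\le C_E\|\ph\|_{H^1_0}$. Combining the two bounds gives $\|\nabla(u\,\ph)\|_{L^2}\le c\|u\|_{W^{1,p}}\|\ph\|_{H^1_0}$, so $u\,\ph\in H^1(\Omega)$ with the claimed estimate.

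It then remains to upgrade $u\,\ph\in H^1$ to $u\,\ph\in H^1_0$, i.e. to capture the vanishing trace, and I would do this by density. The estimate just proved shows that $\ph\mapsto u\,\ph$ is continuous from $H^1_0$ into $H^1$; for $\ph\in\cC^\infty_c(\Omega)$ the product $u\,\ph$ is an $H^1$ function with compact support, hence lies in $H^1_0$. Approximating a general $\ph\in H^1_0$ by $\ph_k\in\cC^\infty_c(\Omega)$ in $H^1_0$-norm, continuity gives $u\,\ph_k\to u\,\ph$ in $H^1$, and since $H^1_0$ is closed we conclude $u\,\ph\in H^1_0$.

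For part 2, identifying $H^{-1}=(H^1_0)'$, I would define the product by $\langle u\,v,\ph\rangle_{H^{-1},H^1_0}:=\langle v,u\,\ph\rangle_{H^{-1},H^1_0}$, which is well posed precisely because $u\,\ph\in H^1_0$ by part 1, and which coincides with the pointwise product when $v\in L^2(\Omega)$. Part 1 then yields $|\langle u\,v,\ph\rangle|\le\|v\|_{H^{-1}}\|u\,\ph\|_{H^1_0}\le c\|u\|_{W^{1,p}}\|v\|_{H^{-1}}\|\ph\|_{H^1_0}$, so $u\,v\in H^{-1}$ with the stated bound. I expect the only genuinely delicate points to be the exponent bookkeeping in part 1 and the verification that the trace is preserved (the passage from $H^1$ to $H^1_0$); the remainder is Hölder's inequality, the Sobolev embeddings, and a routine duality argument.
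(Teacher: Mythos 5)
Your proof is correct and follows essentially the same route as the paper's: Morrey embedding for $u\in L^\infty$, the product rule plus H\"older with $1/r=1/2-1/p$ and the subcritical Sobolev embedding $H^1_0\hookrightarrow L^r$ for the gradient bound, and the duality definition $\langle u\,v,\ph\rangle:=\langle v,u\,\ph\rangle$ for part 2. The only (minor) divergence is in upgrading $u\,\ph\in H^1$ to $H^1_0$: the paper invokes continuity of the trace operator, whereas you argue by density from $\cC^\infty_c$ using that compactly supported $H^1$ functions lie in $H^1_0$ — both are fine, and your version has the small advantage of not requiring the trace operator to be defined on $\partial\Omega$.
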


\begin{proof} 1. From Sobolev embeddings, we have the bounded injections $W^{1,p}\hookrightarrow L^\infty$ and $H^1_0\hookrightarrow L^q$ for any $q$ such that $1/q>1/2-1/d$. We remark that $u\, \ph\in L^2$ and we write, 
\begin{equation}\nonumber\begin{aligned}
\norm{\nabla(u\, \ph)}{L^2} \leq \norm{u\nabla\ph}{L^2} + \norm{\ph\nabla u}{L^2}\leq \norm{u}{L^\infty}\norm{\nabla\ph}{L^2}  + \norm{\ph}{L^q}\norm{\nabla u}{L^p}
\end{aligned}\end{equation}
where $1/q=1/2-1/p>1/2-1/d$. then 
\begin{equation}\nonumber
\norm{\nabla(u\, \ph)}{L^2}\leq c \norm{u}{W^{1,p}}\norm{\ph}{ H^1_0},
\end{equation}
where $c>0$ depends only on $p,d$ and $\Omega$. Then $u\, \ph\in H^1$ and finally $u\, \ph\in H^1_0$ thanks to the continuity of the trace operator on $H^1$. 

2.  From the previous point, the distribution $u\, v$ is well-defined in $H^{-1}$ by
\begin{equation}\nonumber
\inner{u\, v,\ph}{H^{-1},H^1_0}:=\inner{v,u\, \ph}{H^{-1},H^1_0},\qquad \forall \ph\in H^1_0. 
\end{equation}
Moreover, 
\begin{equation}\nonumber\begin{aligned}
\inner{u\, v,\ph}{H^{-1},H^1_0}:=\inner{v,u\, \ph}{H^{-1},H^1_0}\leq \norm{v}{H^{-1}}\norm{u\, \ph}{H^1_0}\leq c\norm{v}{H^{-1}}\norm{u}{W^{1,p}}\norm{\ph}{H^1_0}
\end{aligned}\end{equation}
and then $\norm{uv}{H^{-1}} \leq c\norm{u}{W^{1,p}}\norm{v}{H^{-1}}$. 
\end{proof}

\subsection{Poincaré-type inequalities in $W^{1,p}(\Omega)$ with $p>d$.}

\begin{proposition}\label{prop:poincare1} Let $\Omega$ be a open, bounded, connected, Lipschitz domain of $\R^d$. Fix $p>d$, $n\in\N^*$ and let $ x_0 \in \overline{\Omega}$. There exists a constant $c_1>0$ such that
\begin{equation}\nonumber
\forall \g u\in W^{1,p}(\Omega,\R^n), \; \qquad \norm{\g u - \g u(x_0)}{L^p(\Omega)}\leq c_1\norm{\nabla \g u}{L^p(\Omega)}. 
\end{equation}
Moreover, there exists a constant $c_2>0$ such that
\begin{equation}\nonumber
\forall \g u\in W^{1,p}(\Omega,\R^n), \; \qquad \norm{\g u - \g u(x_0)}{L^\infty(\Omega)}\leq c_2\norm{\nabla \g u}{L^p(\Omega)}. 
\end{equation}

\end{proposition}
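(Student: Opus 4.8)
My plan is to establish the $L^\infty$ estimate first by a compactness--contradiction argument and then to deduce the $L^p$ estimate at no cost. The starting observation is that, since $p>d$, the Morrey embedding gives $W^{1,p}(\Omega,\R^n)\hookrightarrow \cC^{0,1-d/p}(\ol\Omega,\R^n)$ for the bounded Lipschitz domain $\Omega$; in particular every $\g u\in W^{1,p}$ has a continuous representative, so the point value $\g u(x_0)$ is well defined for $x_0\in\ol\Omega$, and by Arzel\`a--Ascoli this embedding is \emph{compact} into $\cC^0(\ol\Omega,\R^n)$.

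To prove the second inequality I would argue by contradiction: if no $c_2$ worked, there would exist a sequence $(\g u_k)$ in $W^{1,p}(\Omega,\R^n)$ with
\[
\norm{\g u_k-\g u_k(x_0)}{L^\infty(\Omega)} > k\,\norm{\nabla \g u_k}{L^p(\Omega)}.
\]
Setting $\g v_k := \big(\g u_k-\g u_k(x_0)\big)/\norm{\g u_k-\g u_k(x_0)}{L^\infty(\Omega)}$, one has $\g v_k(x_0)=\g 0$, $\norm{\g v_k}{L^\infty(\Omega)}=1$ and $\norm{\nabla \g v_k}{L^p(\Omega)}<1/k\to 0$. Thus $(\g v_k)$ is bounded in $W^{1,p}(\Omega,\R^n)$, and since $p>1$ makes this space reflexive, I can extract a subsequence with $\g v_k\rightharpoonup \g v$ weakly in $W^{1,p}$ and, using the compact embedding above, $\g v_k\to \g v$ uniformly on $\ol\Omega$ (the two limits coinciding).

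It then remains to identify the limit. Weak convergence yields $\nabla \g v_k\rightharpoonup \nabla \g v$ in $L^p$, while $\nabla \g v_k\to \g 0$ strongly; hence $\nabla \g v=\g 0$. As $\Omega$ is connected, $\g v$ is constant, and the uniform convergence gives $\g v(x_0)=\lim_k \g v_k(x_0)=\g 0$, so that constant is zero, i.e. $\g v\equiv \g 0$. But uniform convergence also forces $\norm{\g v}{L^\infty(\Omega)}=\lim_k\norm{\g v_k}{L^\infty(\Omega)}=1$, a contradiction. This produces the constant $c_2>0$.

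Finally, the first inequality is immediate: since $\Omega$ is bounded, for every $\g u\in W^{1,p}(\Omega,\R^n)$,
\[
\norm{\g u-\g u(x_0)}{L^p(\Omega)}\leq |\Omega|^{1/p}\,\norm{\g u-\g u(x_0)}{L^\infty(\Omega)}\leq |\Omega|^{1/p}c_2\,\norm{\nabla \g u}{L^p(\Omega)},
\]
so one takes $c_1:=|\Omega|^{1/p}c_2$. The delicate point is the compactness step: I must simultaneously retain enough regularity that the limit $\g v$ is a $W^{1,p}$ function with vanishing gradient (so connectedness forces it to be constant) \emph{and} recover its pointwise value at $x_0$ through uniform convergence. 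Both are available precisely because $p>d$ places us in the Morrey regime, where $W^{1,p}(\Omega)\hookrightarrow\cC^0(\ol\Omega)$ is compact; without the strict inequality $p>d$ the argument breaks down.
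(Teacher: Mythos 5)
Your proof is correct. It uses the same core mechanism as the paper --- a normalize-and-contradict compactness argument in which the weak limit is shown to have vanishing gradient, hence to be constant by connectedness, hence zero because it vanishes at $x_0$ --- but you run the two inequalities in the opposite order. The paper normalizes in $L^p$, extracts a subsequence converging weakly in $W^{1,p}$ and strongly in $L^p$, obtains the $L^p$ estimate first, and then upgrades to the $L^\infty$ estimate via the Morrey embedding $W^{1,p}\hookrightarrow L^\infty$ applied to $\g u-\g u(x_0)$. You instead normalize in $L^\infty$, invoke the compact embedding $W^{1,p}(\Omega)\hookrightarrow \cC^0(\ol\Omega)$ to get uniform convergence (which also hands you $\g v(x_0)=0$ directly, where the paper has to argue that the constraint set $\{\g w:\g w(x_0)=0\}$ is weakly closed), and then the $L^p$ estimate falls out trivially from the boundedness of $\Omega$. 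Your ordering is marginally more economical at the end, at the price of needing the slightly stronger compactness into $\cC^0(\ol\Omega)$ up front; both hinge identically on $p>d$.
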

\begin{proof} By contradiction, we assume that there exits a sequence $(\g u_k)_k$ such that 
\begin{align*}
\norm{\g u_k- \g u_k (x_0)}{L^p(\Omega)}=1,
\end{align*}
and $\norm{\nabla \g u_k}{L^p(\Omega)}\to 0$. Denoting $\bm{v}_k := \g u_k - \g u_k (x_0)$, we subsequently have $\norm{\bm{v}_k}{L^p(\Omega)}=1$, $\bm{v}(x_0) = \bm{0}$ and $\norm{\nabla \bm{v}_k}{L^p(\Omega)}\to 0$. By classical compactness arguments, this sequence converge (up to a subsequence) weakly to $\bm{v}^*$ in $W^{1,p}(\Omega)$ and strongly in $L^p(\Omega)$. This limit satisfies $\nabla\bm{v}^*= \bm{0}$ and as $\{\bm{w}\in W^{1,p} \; | \;\bm{w}(x_0)=0 \}$ is closed in $W^{1,p}(\Omega)$, we also get that $\bm{v}^*(0)=0$. Finally $\bm{v}^*=\g 0$ which contradicts the hypothesis $\norm{\bm{v}_k}{L^p(\Omega)}=1$.  \medskip

The second inequality derives from the Sobolev embedding $W^{1,p}(\Omega,\R^d)\hookrightarrow L^\infty(\Omega,\R^d)$ as $p>d$, which reads: 
\begin{align*}
	\norm{\g u - \g u(0)}{L^\infty(\Omega)} \leq c \norm{\g u - \g u(0)}{W^{1,p}(\Omega)}\leq c(1+c_1^p)^{\frac 1p}\norm{\nabla\g u }{L^p(\Omega)}. 
\end{align*}
\end{proof}

\begin{corollary}\label{cor:poincare1} Let $B$ be the unit ball of $\R^d$ and fix $p>d$ and $n\in\N^*$. There exists a constant $c>0$ such that
\begin{equation}\nonumber
\forall \g u\in W^{1,p}(B,\R^n),\qquad \norm{\g u - \g u(0)}{L^\infty(B)}\leq c\norm{\nabla \g\mu}{L^p(B)},
\end{equation}
and for any $x_0 \in \mathbb{R}^d$, any $\e >0$, we have
\begin{equation}\nonumber
\forall \g\mu\in W^{1,p}(B(x_0,\e),\R^n),\qquad \norm{\g u - \g u(x_0)}{L^\infty(B(x_0,\e))}\leq c\, \e^{1-d/p}\norm{\nabla \g\mu}{L^p(B(x_0,\e))}. 
\end{equation}
\end{corollary}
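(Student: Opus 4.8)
The plan is to derive both inequalities from Proposition \ref{prop:poincare1}, the first by direct specialization and the second by a scaling argument. First I would observe that the unit ball $B\subset\R^d$ is an open, bounded, connected, Lipschitz domain and that $0\in\overline B$, so the second ($L^\infty$) estimate of Proposition \ref{prop:poincare1} applies verbatim with $\Omega=B$ and the base point $x_0=0$. This immediately yields a constant $c>0$ such that $\norm{\g u-\g u(0)}{L^\infty(B)}\leq c\norm{\nabla\g u}{L^p(B)}$ for every $\g u\in W^{1,p}(B,\R^n)$. Note that all the pointwise evaluations here make sense because $W^{1,p}\hookrightarrow\cC^0$ for $p>d$.

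For the second, scaled inequality, I would reduce the ball $B(x_0,\e)$ to the unit ball by the change of variables $x=x_0+\e y$. Given $\g u\in W^{1,p}(B(x_0,\e),\R^n)$, define $\g v(y):=\g u(x_0+\e y)$ for $y\in B$. Then $\g v\in W^{1,p}(B,\R^n)$, $\g v(0)=\g u(x_0)$, and $\nabla_y\g v(y)=\e\,\nabla_x\g u(x_0+\e y)$. The $L^\infty$ norm is scale invariant, $\norm{\g v-\g v(0)}{L^\infty(B)}=\norm{\g u-\g u(x_0)}{L^\infty(B(x_0,\e))}$, while for the gradient the substitution introduces a Jacobian factor $\e^{d}$, so that $\norm{\nabla_y\g v}{L^p(B)}^p=\e^{p}\,\e^{-d}\,\norm{\nabla_x\g u}{L^p(B(x_0,\e))}^p$, giving $\norm{\nabla_y\g v}{L^p(B)}=\e^{1-d/p}\norm{\nabla\g u}{L^p(B(x_0,\e))}$.

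Applying the unit-ball estimate to $\g v$ and substituting these two identities then produces exactly $\norm{\g u-\g u(x_0)}{L^\infty(B(x_0,\e))}\leq c\,\e^{1-d/p}\norm{\nabla\g u}{L^p(B(x_0,\e))}$, with the same constant $c$ as in the first inequality, which crucially depends neither on $\e$ nor on $x_0$.

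There is no serious obstacle here; the only point requiring care is the bookkeeping of the scaling exponents, namely combining the factor $\e^{p}$ coming from the chain rule applied to $\nabla\g v$ with the factor $\e^{-d}$ coming from the Jacobian of the substitution, so that after taking the $p$-th root one is left precisely with the exponent $1-d/p$. The scale-invariance of the $L^\infty$ norm under the dilation is what makes the argument clean, and the uniformity of $c$ in $\e$ and $x_0$ is exactly the feature exploited in the proof of Proposition \ref{prop:nonzero}.
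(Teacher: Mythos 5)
Your proof is correct and follows essentially the same route as the paper: the first inequality is obtained by applying the $L^\infty$ estimate of Proposition \ref{prop:poincare1} to the unit ball with base point $0$, and the second by the dilation $\g v(y)=\g u(x_0+\e y)$, combining the factor $\e^p$ from the chain rule with the Jacobian factor $\e^{-d}$ to obtain the exponent $1-d/p$. The scaling bookkeeping and the uniformity of the constant in $\e$ and $x_0$ match the paper's argument exactly.
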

\begin{proof} The first inequality is a consequence of the previous Lemma. Now, for any function $\g u\in W^{1,p}(B(x_0,\e),\R^n)$ we denote $\g u_\e(x):=\g u(\e x + x_0)$ and write
\begin{equation}\nonumber
\begin{aligned}
\norm{\g u - \g u(x_0)}{L^\infty(B(x_0,\e))}^p &= \norm{\g u_\e - \g u_\e(0)}{L^\infty(B)}^p \leq c_2^p\norm{\nabla\g u_\e }{L^p(B}^p = c_2^p\int_B\e^p|\nabla \g u|^p(\e x + x_0)\td x\\
&\leq c_2^p \e^{p-d}\int_B |\nabla \g u|^p(\e x + x_0)\e^d\td x = c_2^p \e^{p-d}\int_{B(0,\e)}|\nabla \g u|^p(x)\td x. 
\end{aligned}
\end{equation}
\end{proof}

\begin{definition}
	We define an open cone in $\R^d$ of vertex $x_0\in\R^d$ the set 
	\begin{align*}
		C(x_0,v) := \set{x_0 + x}{|x| < x\cdot v}
	\end{align*}
	for $v\in\R^d$ such that $0<|v|<1$.  It is known (see \cite{adams2003sobolev,hofmann2007geometric}) that a Lipschitz domain  $\Omega \subset \mathbb{R}^d$ satisfies the interior cone condition, saying that for any $x_0\in\partial\Omega$, there exists an open cone $C(x_0,v)$ and $\e >0$ such that 
	\begin{equation}\nonumber
	C(x_0,v)\cap B(x_0,\e)\subset \Omega
	\end{equation}
	for $\e>0$ small enough. 
\end{definition}

\begin{corollary}\label{cor:poincare2} Let $\Omega$ be a open, bounded, connected, Lipschitz domain of $\mathbb{R}^d$, fix $p>d$ and $n\in\N^*$. Let $x_0 \in\partial\Omega$. As $\Omega$ satisfies the cone condition, there exists an open cone $C(x_0,v)$ and $\e_0 >0$ such that for all $\e\leq \e_0$ we have $C(x_0,v)\cap B(x_0,\e) \subset \Omega$. Moreover, there exists a constant $c>0$ such that 
\begin{equation}\nonumber
\forall \g u\in W^{1,p}(\Omega,\R^n), \qquad \norm{\g u - \g u(x_0)}{L^{\infty}(C(x_0,v) \cap B(x_0,\e))} \leq c\, \e^{1-d/p}\norm{\nabla \g u}{L^p(C(x_0,v) \cap B(x_0,\e))}.
\end{equation}
\end{corollary}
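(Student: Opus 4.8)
The plan is to reduce the estimate to a single fixed reference domain by a dilation argument, in exactly the same spirit as the proof of Corollary \ref{cor:poincare1}; the only genuinely new feature is the scale invariance of the cone. First I would translate the vertex to the origin and set $\mathcal C_v := C(x_0,v)-x_0 = \set{x}{|x|<x\cdot v}$, together with the fixed \emph{finite cone} $K := \mathcal C_v\cap B(0,1)$. The set $\mathcal C_v$ is a genuine open cone of positive aperture with vertex at the origin, so in particular it is invariant under positive dilations about $0$; consequently $K$ is an open, bounded, connected, Lipschitz domain with $0\in\overline K$. I may therefore apply the $L^\infty$-part of Proposition \ref{prop:poincare1} on the domain $K$ with distinguished point $0$, which yields a constant $c_K>0$, depending only on $p$, $d$, $n$ and the opening of the cone, such that $\norm{\g w - \g w(0)}{L^\infty(K)}\le c_K \norm{\nabla\g w}{L^p(K)}$ for every $\g w\in W^{1,p}(K,\R^n)$.

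The second step transfers this fixed inequality to the family of domains $K_\e := C(x_0,v)\cap B(x_0,\e)$ by scaling. The essential geometric observation is that $\mathcal C_v$ is invariant under dilations centred at the origin, so the map $x\mapsto x_0+\e x$ sends $K$ bijectively onto $K_\e$, i.e. $K_\e = x_0 + \e K$. Given $\g u\in W^{1,p}(\Omega,\R^n)$, whose continuous representative is well defined at $x_0\in\partial\Omega$ since $p>d$, I set $\g u_\e(x):=\g u(x_0+\e x)$ for $x\in K$, so that $\g u_\e\in W^{1,p}(K,\R^n)$, $\g u_\e(0)=\g u(x_0)$ and $\nabla\g u_\e(x)=\e\,(\nabla\g u)(x_0+\e x)$. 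Applying the reference inequality to $\g u_\e$ and changing variables $y=x_0+\e x$ gives
\begin{equation}\nonumber
\norm{\g u-\g u(x_0)}{L^\infty(K_\e)} = \norm{\g u_\e-\g u_\e(0)}{L^\infty(K)} \le c_K\norm{\nabla\g u_\e}{L^p(K)} = c_K\,\e^{1-d/p}\norm{\nabla\g u}{L^p(K_\e)},
\end{equation}
where the factor $\e^{1-d/p}$ arises from $\norm{\nabla\g u_\e}{L^p(K)}^p=\e^p\!\int_K|\nabla\g u(x_0+\e x)|^p\,\td x=\e^{p-d}\norm{\nabla\g u}{L^p(K_\e)}^p$. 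Setting $c:=c_K$ concludes the proof, the constant being uniform in $\e\le\e_0$ precisely because the reference domain $K$ does not depend on $\e$.

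The step I expect to demand the most care is the first one: verifying that the finite cone $K$ is an admissible (bounded, connected, Lipschitz) domain to which Proposition \ref{prop:poincare1} genuinely applies, and that $0\in\overline K$. The Lipschitz character near the vertex is the delicate point, but it is standard for a finite cone of positive aperture. The remainder is the dilation bookkeeping, which is routine once the scale invariance $x_0+\e K = K_\e$ has been recorded; and the well-definedness of the pointwise value $\g u(x_0)$ is an immediate consequence of the embedding $W^{1,p}\hookrightarrow \cC^0(\overline\Omega)$ valid for $p>d$.
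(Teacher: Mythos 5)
Your argument is correct and coincides with the paper's own proof: both rescale to the fixed reference domain $C(0,v)\cap B(0,1)$, apply the $L^\infty$ estimate of Proposition \ref{prop:poincare1} there (with the distinguished point $0\in\overline{K}$, which the proposition allows), and recover the factor $\e^{1-d/p}$ by the change of variables $y=x_0+\e x$. Your additional remarks on the dilation invariance of the cone and the Lipschitz character of the finite cone only make explicit what the paper leaves implicit.
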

\begin{proof}
	We rescale the domain to $C(0,v) \cap B(0,1)$ which is an open bounded and connected Lipschitz domain and apply \ref{prop:poincare1}. For any $\g u\in W^{1,p}(\Omega)$ we denote $\g u_\e(x):=\g u(\e\, x + x_0)$ and write
\begin{align*}
&\norm{\g u - \g u(x_0)}{L^\infty(C(x_0,v) \cap B(x_0,\e))}^p = \norm{\g u_\e - \g u_\e(0)}{L^\infty(C(0,v)\cap B(0,1))}^p \\
&\qquad\qquad  \leq c_3^p\norm{\nabla \g u_\e }{L^p(C(0,v) \cap B(0,1))}^p = c^p\int_{C(0,v) \cap B(0,1)}\e^p|\nabla \g u|^p(\e x + x_0)\td x\\
&\qquad\qquad \leq c^p \e^{p-d}\int_{C(0,v) \cap B(0,1)}|\nabla \g u|^p(\e x + x_0)\e^d\td x = c^p \e^{p-d}\int_{C(x_0,v) \cap B(x_0,\e)}|\nabla \g u|^p(x)\td x. 
\end{align*}
\end{proof}

\subsection{Null space of a sequence of operators limit} \label{app:approx}

\begin{lemma}\label{prop:lemmaLp-proof}Let $H$ be an Hilbert space and $F$ any normed vector space. Consider a sequence of operator $(A_\ell)_{\ell\in\N}$ of $\cL(H,F)$ that converges to $A\in\cL(H,F)$ for the operator norm. If
\begin{itemize}

\item[-] $A$ has closed range,

\item[-] $\exists k\in\N,\quad \forall\ell\in\N,\quad \dim N(A_\ell) \geq k$,

\end{itemize}
then  $\dim N(A)\geq k$. 
\end{lemma}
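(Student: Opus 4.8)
The plan is to exploit the closed-range estimate to show that, for $\ell$ large, the null space $N(A_\ell)$ is squeezed arbitrarily close to $N(A)$, and then to transport the dimension across this gap via the orthogonal projection onto $N(A)$. Throughout, let $P$ denote the orthogonal projection of $H$ onto the closed subspace $N(A)$.

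First I would record the quantitative form of the hypothesis: the closedness of the range of $A$ is equivalent to the existence of a constant $c>0$ such that $\|x\|_H \le c\,\|Ax\|_F$ for all $x\in N(A)^\perp$ (this is precisely how the closed-range property is used in Theorem \ref{theo:closedrange}; in the application $F=H^{-1}$ is complete, so the equivalence is the standard one). Since $A(Px)=0$, this rewrites as $\|(I-P)x\|_H \le c\,\|Ax\|_F$ for every $x\in H$, as $(I-P)x\in N(A)^\perp$.

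Next, for each $\ell$ I would fix a $k$-dimensional subspace $V_\ell\subseteq N(A_\ell)$, which exists because $\dim N(A_\ell)\ge k$. For any $v\in V_\ell$ one has $Av=(A-A_\ell)v$, whence $\|Av\|_F\le \|A-A_\ell\|\,\|v\|_H$. Feeding this into the coercivity estimate yields $\|(I-P)v\|_H \le \delta_\ell\,\|v\|_H$, where $\delta_\ell := c\,\|A-A_\ell\|\to 0$ by hypothesis. The Pythagorean identity in $H$ then gives $\|Pv\|_H^2=\|v\|_H^2-\|(I-P)v\|_H^2 \ge (1-\delta_\ell^2)\,\|v\|_H^2$ for all $v\in V_\ell$. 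Choosing $\ell$ large enough that $\delta_\ell<1$, this shows that $P$ restricted to $V_\ell$ is injective, since $Pv=0$ forces $v=0$. Consequently $P(V_\ell)$ is a $k$-dimensional subspace of $N(A)$, so $\dim N(A)\ge k$, which is the claim.

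The main obstacle is that a naive compactness argument fails: an orthonormal family in $N(A_\ell)$ converges only weakly, and its weak limit may lose norm or even vanish, so one cannot directly extract a surviving $k$-dimensional family in $N(A)$. The closed-range estimate is exactly the ingredient that repairs this — it pins the unit sphere of $N(A_\ell)$ uniformly close to $N(A)$, preventing the projection $P$ from collapsing any direction and thereby forcing the dimension to pass to the limit.
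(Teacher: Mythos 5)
Your proof is correct, and it takes a genuinely different route from the paper's. Both arguments hinge on the same quantitative input --- the closed-range estimate applied to the component of a null vector of $A_\ell$ orthogonal to $N(A)$, giving $\|(I-P)v\|_H\le c\,\|A-A_\ell\|\,\|v\|_H$ --- but they diverge in how they convert this into a dimension count. The paper picks an orthonormal family $(v_\ell^1,\dots,v_\ell^k)$ in each $N(A_\ell)$, shows the orthogonal parts vanish, and then extracts convergent subsequences of the $N(A)$-components using compactness of bounded sets in the \emph{finite-dimensional} space $N(A)$ (which is why it must first dispose of the case $\dim N(A)=\infty$ separately); the limit family is shown to remain orthonormal, hence spans a $k$-dimensional subspace. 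You instead observe that once $c\,\|A-A_\ell\|<1$, the Pythagorean identity forces the orthogonal projection $P$ onto $N(A)$ to be injective on any $k$-dimensional $V_\ell\subseteq N(A_\ell)$, so $P(V_\ell)$ is already a $k$-dimensional subspace of $N(A)$. This is cleaner: it needs no subsequence extraction, no limit, and no case distinction on whether $\dim N(A)$ is finite. Your parenthetical caveat about the equivalence between closedness of the range and the coercivity estimate when $F$ is merely normed is well placed; the paper itself uses ``closed range'' as shorthand for that estimate (see the statement of Theorem \ref{theo:closedrange}), so nothing is lost.
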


\begin{proof} If $\dim N(A) = \infty$, then the proof is complete. Assume now $\dim N(A)<+\infty$. For every $\ell \in \N$, we consider an orthonormal family $(v^1_\ell,...,v^k_\ell)$ of $N(A_\ell)$.
From the convergence of $A_\ell$ in $\cL(H,F)$, we have:
\begin{align*} 
\forall i \in \{1,...,k \}, \quad \| Av^i_\ell \|_F = \| (A_\ell - A) v^i_\ell \|_F \le \|A_\ell - A \|_{H,F} \underset{\ell \to \infty}{\to} 0.
\end{align*}
For each $1\leq i\leq k$ and $\ell\in\N$ we decompose $v^i_\ell$ as $v^i_\ell{} = v^i_\ell{}^\perp + v^i_\ell{}^N$ with $v^i_\ell{}^N\in N(A)$ and $v^i_\ell{}^\perp\in N(A)^\perp$. The fact that $A$ has closed range implies that all the orthogonal parts converge to zero:
\begin{align}\nonumber
\|v^i_\ell{}^\perp \|_H \le C \|A v^i_\ell{}^\perp \|_F \le C \|A v^i_\ell \|_F  \underset{\ell \to \infty}{\to} 0,
\end{align}
where the constant $C$ is the closed range constant of the operator $A$. We now recall that each sequence $(v^i_\ell{}^N)_\ell$ is bounded in the finite dimensional space $N(A)$. By compactness, up to an subsequence, these sequences converge to some $v^i\in N(A)$. Then, up to a subsequence, we have that 
\begin{equation}\nonumber
\forall i\in\{1,\dots,k\},\quad v^i_\ell  \overset{H}{\underset{\ell \to \infty}{\lra}} v^i\in N(A). 
\end{equation}
The limit family $(v^1,\dots,v^k)$ is also orthonormal as $\norm{v^i}{H}=\lim_{\ell\to +\infty}\norm{v^i_\ell}{H}=1$ and for any $i\neq j$, by continuity of the inner product, we have 
\begin{equation}\nonumber
\inner{v^i,v^j}{H} = \lim_{\ell\to +\infty}\inner{v^i_\ell,v^j_\ell}{H} = 0.
\end{equation}
Therefore, we have constructed an orthonormal family of $k$ vectors of $N(A)$, and this implies that $\dim N(A)\geq k$. 
\end{proof}

\begin{remark}
The closed range hypothesis in \ref{prop:lemmaLp} is fundamental. Here is a counterexample where $A$ does not have closed range. Let $H$ be a seperable Hilbert space and $(e_n)_{n\geq 1}$ an Hilbert basis. Wet set
\begin{equation}\nonumber
A:=\sum_{n\geq 1}\frac{1}{n}e_n\otimes e_n\quad\tand\quad A_\ell := A - \frac{1}{\ell} e_\ell\otimes e_\ell,\quad \forall \ell\geq 1.
\end{equation}
We see that $A_\ell\to A$ in $\cL(H)$, $N(A)=\{0\}$ and $N(A_\ell) = \vspan(e_\ell)$ for all $\ell\geq 1$.  
\end{remark}

\bibliography{biblio.bib}

\end{document}